\newtheorem{thm}{Theorem}[section]
\newtheorem{cor}[thm]{Corollary}
\newtheorem{lem}[thm]{Lemma}
\newtheorem{mydef}[thm]{Definition}
\newtheorem{rem}[thm]{Remark}
\newtheorem{ex}[thm]{Example}
\newtheorem{prop}[thm]{Proposition}
\newcommand{\ric}[1]{\text{Ric}(#1)}
\newcommand{\xRightarrow}[2][]{\ext@arrow 0359\Rightarrowfill@{#1}{#2}}
\newtheorem{thmx}{Theorem}
\begin{document}

\title[K-semistability of cscK manifolds]{K-semistability of cscK manifolds with transcendental cohomology class}
\author{Zakarias Sj\"ostr\"om Dyrefelt}
\email{zakarias.sjostrom@math.univ-toulouse.fr}
\address{Insitut de Math\'ematiques de Toulouse, Universit\'e Paul Sabatier, 118 route de Narbonne, F-31062 Toulouse Cedex 9 and Centre de Math\'ematiques Laurent Schwartz, \'Ecole Polytechnique, 91 128 Palaiseau Cedex }


\begin{abstract}
We prove that constant scalar curvature K\"ahler (cscK) manifolds with transcendental cohomology class are K-semistable, naturally generalising the situation for polarised manifolds. Relying on a very recent result by R. Berman, T. Darvas and C. Lu regarding properness of the K-energy, it moreover follows that cscK manifolds with finite automorphism group are uniformly K-stable. 
As a main step of the proof we establish, in the general K\"ahler setting, a formula relating the (generalised) Donaldson-Futaki invariant to the asymptotic slope of the K-energy along weak geodesic rays. 
\end{abstract}

\maketitle


\section{Introduction} 

\noindent In this paper we are interested in questions of stability for constant scalar curvature K\"ahler (cscK) manifolds with \emph{transcendental}\footnote{We use the word 'transcendental' to emphasise that the K\"ahler class in question is not necessarily of the form $c_1(L)$ for some ample line bundle $L$ over $X$.} cohomology class. To this end, let  $(X,\omega)$ be a compact K\"ahler manifold and $\alpha := [\omega] \in H^{1,1}(X,\mathbb{R})$ the corresponding K\"ahler class. 
When $\alpha$ is the first Chern class $c_1(L)$ of some ample line bundle $L$ over $X$, such questions are closely related to the Yau-Tian-Donaldson (YTD) conjecture \cite{Donaldsontoric, Tian, Yauconjecture}: \emph{A polarised algebraic manifold $(X,L)$ is K-polystable if and only if the polarisation class $c_1(L)$ admits a K\"ahler metric of constant scalar curvature. }
This conjecture was recently confirmed in the \emph{Fano case}, i.e. when $L = -K_X$, cf. \cite{CDSone, CDStwo, CDSthree, TianYTDconjecture}. In this important special case, a cscK metric is nothing but a K\"ahler-Einstein metric. For general polarised cscK manifolds, the "if" direction of the YTD conjecture was initially proven by Mabuchi in \cite{Mabuchi08}, see also \cite{Berman}. Prior to that, several partial results had been obtained by Donaldson \cite{Donaldsoncalabi} and Stoppa \cite{StoppacscKmetricsimpliesstability}, both assuming that $c_1(L)$ contains a cscK metric. 

For transcendental classes very little is currently known about the validity of a correspondence between \emph{existence of cscK metrics} and \emph{stability} in the spirit of the YTD conjecture. 
Moreover, from a differential geometric point of view, there is no special reason to restrict attention to K\"ahler manifolds with associated integral (or rational) cohomology classes, which are then automatically of the form $\alpha = c_1(L)$ for some ample ($\mathbb{Q}$)-line bundle $L$ over $X$. 
In order to extend the study of stability questions to a transcendental setting, recall that there is an intersection theoretic description of the Donaldson-Futaki invariant, cf. \cite{Wang} and \cite{Odaka}. As first pointed out by Berman \cite{Bermantransc}, a straightforward generalised notion of K-stability in terms of cohomology can thus be defined and a version of the YTD conjecture can be made sense of in this setting. 
The setup is explained in detail in Section \ref{test config}. 
Our main goal is to establish the following result:


\begin{thmx} \label{main conjecture semistability} 
Let $(X,\omega)$ be a compact K\"ahler manifold and let $\alpha := [\omega] \in H^{1,1}(X,\mathbb{R})$ be the corresponding K\"ahler class.
\begin{enumerate}
\item 
If the Mabuchi \emph{(K-energy)} functional is bounded from below in $\alpha$, then $(X,\alpha)$ is \emph{K-semistable} \emph{(in the generalised sense of Definition \ref{Ksemistable definition})}.
\item If the Mabuchi functional is coercive,
then $(X,\alpha)$ is uniformly K-stable \emph{(in the generalised sense of Definition \ref{definition uniform K-stability})}.  
\end{enumerate}
\end{thmx}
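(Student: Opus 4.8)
The plan is to reduce both statements to a single analytic inequality: for every (cohomological) test configuration $(\mathcal{X},\mathcal{A})$ for $(X,\alpha)$, the generalised Donaldson--Futaki invariant $\mathrm{DF}(\mathcal{X},\mathcal{A})$ is bounded below by the asymptotic slope at infinity of the Mabuchi K-energy $\mathrm{M}$ along an associated weak geodesic ray $(\varphi_t)_{t\geq 0}$ in the space of K\"ahler potentials (or its finite-energy completion). This is precisely the "main step" advertised in the abstract, so I would first isolate and prove the slope formula
\begin{equation*}
\lim_{t\to\infty}\frac{\mathrm{M}(\varphi_t)}{t}\;\leq\;\mathrm{DF}(\mathcal{X},\mathcal{A}),
\end{equation*}
valid for any test configuration and the canonical subgeodesic/geodesic ray emanating from a fixed K\"ahler potential. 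The inequality (rather than equality) is all that is needed, and it should come from lower semicontinuity of the relevant energy functionals together with the intersection-theoretic formula for $\mathrm{DF}$.

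Granting the slope formula, part (1) is immediate: if $\mathrm{M}$ is bounded below on the space of K\"ahler potentials in $\alpha$, then for any geodesic ray $(\varphi_t)$ normalised to start at a fixed potential, $\mathrm{M}(\varphi_t)\geq -C$ for all $t$, hence $\lim_{t\to\infty}\mathrm{M}(\varphi_t)/t\geq 0$, and therefore $\mathrm{DF}(\mathcal{X},\mathcal{A})\geq 0$ for every test configuration; this is the definition of K-semistability (Definition \ref{Ksemistable definition}). For part (2), coercivity of $\mathrm{M}$ means $\mathrm{M}(\varphi)\geq \delta\, J(\varphi) - C$ for some $\delta>0$ and the $J$-functional (or the $d_1$-distance to the orbit of the base point); I would then need to relate the asymptotic slope of $J$ along the geodesic ray to the norm $\|(\mathcal{X},\mathcal{A})\|$ of the test configuration appearing in Definition \ref{definition uniform K-stability}, via an estimate of the form $\lim_{t\to\infty} J(\varphi_t)/t \geq c\,\|(\mathcal{X},\mathcal{A})\|$ for non-trivial test configurations. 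Combining with the slope formula yields $\mathrm{DF}(\mathcal{X},\mathcal{A})\geq \delta c\,\|(\mathcal{X},\mathcal{A})\|$, which is uniform K-stability.

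The technical heart, and the step I expect to be the main obstacle, is the construction and control of the weak geodesic ray associated to a transcendental test configuration and the proof of the slope inequality in this non-projective setting. In the polarised case one uses the total space of the test configuration, resolves it, and writes $\mathrm{M}$ along the ray in terms of (relative) Deligne pairings or Bott--Chern classes; here one must instead work directly with closed positive $(1,1)$-currents on the total space $\mathcal{X}$, solve a degenerate complex Monge--Amp\`ere / homogeneous geodesic equation with prescribed boundary data to produce the ray, and verify that the Chen--Tian decomposition of the K-energy into an entropy term and an energy (pluripotential) term behaves well under the limit $t\to\infty$. Controlling the entropy term requires care: one needs a lower bound on the entropy along the ray matching the log-discrepancy / canonical-class contribution to $\mathrm{DF}$, which in the transcendental case must be phrased cohomologically rather than via discrepancies of divisors. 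I would handle the energy term by the now-standard monotonicity and continuity properties of Monge--Amp\`ere energies along geodesics (Berman--Boucksom, Darvas), and the entropy term by a semicontinuity argument together with the resolution of singularities of $\mathcal{X}$, pushing forward to $X\times\mathbb{C}$. Once the slope formula is in place, the deductions in parts (1) and (2) are formal, modulo invoking the properness/coercivity results quoted in the introduction.
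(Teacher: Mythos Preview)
Your overall strategy is correct and matches the paper's: reduce to the slope inequality $\overline{\lim}_{t\to\infty}\mathrm{M}(\varphi_t)/t \leq \mathrm{DF}(\mathcal{X},\mathcal{A})$ along a weak geodesic ray associated to the test configuration, then deduce (1) from boundedness and (2) from coercivity together with the analogous slope formula for $\mathrm{J}$.

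There is, however, a meaningful difference in how the slope inequality is obtained. You propose to handle the energy part via general monotonicity/continuity properties of Monge--Amp\`ere energies and the entropy part via a semicontinuity argument matching log-discrepancies. The paper instead introduces a \emph{multivariate Deligne functional} $\langle\varphi_0,\dots,\varphi_n\rangle_{(\theta_0,\dots,\theta_n)}$ (Definition~\ref{Multivariate energy def}) as a transcendental substitute for Deligne pairings, and proves (Theorem~B) that its asymptotic slope along compatible rays is \emph{exactly} the corresponding intersection number $(\mathcal{A}_0\cdots\mathcal{A}_n)$. This gives equalities, not just inequalities, for the slopes of $\mathrm{E}$, $\mathrm{E}^{\mathrm{Ric}(\omega)}$, and $\mathrm{J}$; in particular $\lim \mathrm{J}(\varphi_t)/t = \mathrm{J}^{\mathrm{NA}}(\mathcal{X},\mathcal{A})$ exactly, so no separate norm comparison is needed for part (2). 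For the remaining entropy-type term, the paper does \emph{not} bound the entropy from below as you suggest: it constructs an auxiliary functional $\mathrm{M}_{\mathcal{B}}$ (using a smooth reference metric $\mathcal{B}$ on $K_{\mathcal{X}/\mathbb{P}^1}$) whose slope equals $\mathrm{DF}$ by Theorem~B, and observes that $\mathrm{M}-\mathrm{M}_{\mathcal{B}}$ is bounded \emph{above} simply because $\log\bigl((\Omega+dd^c\Psi)^n\wedge\pi^*(idz\wedge d\bar z)/\lambda_{\mathcal{B}}\bigr)$ is the logarithm of a psh form divided by a smooth volume form on the total space. This is considerably more elementary than a log-discrepancy matching argument, and it is precisely why only the inequality (not the full Theorem~C) is needed for Theorem~A.
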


\noindent For precise definitions we refer to the core of the paper. As an immediate consequence of \cite[Theorem 1.1]{BB} and the above Theorem A (i) we obtain the following corollary, which is a main motivation for our work (see also Remark \ref{remark YTD} below). 

\begin{cor} \label{cor main} If the K\"ahler class $\alpha \in H^{1,1}(X,\mathbb{R})$ admits a constant scalar curvature representative, then $(X,\alpha)$ is K-semistable. 
\end{cor}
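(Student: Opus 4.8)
The plan is to obtain Corollary~\ref{cor main} as a direct combination of Theorem~A~(i) with the convexity of the Mabuchi functional along weak geodesics, due to Berman and Berndtsson. So let $\omega_{\mathrm{cscK}} \in \alpha$ be a constant scalar curvature K\"ahler representative, which exists by hypothesis, and write $\mathcal{M}$ for the Mabuchi functional on the space of K\"ahler potentials (equivalently, K\"ahler metrics) in $\alpha$. The whole argument reduces to showing that $\mathcal{M}$ is bounded from below in $\alpha$, since then Theorem~A~(i) applies verbatim and gives that $(X,\alpha)$ is K-semistable in the sense of Definition~\ref{Ksemistable definition}.

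To prove the lower bound I would argue as follows. By \cite[Theorem 1.1]{BB}, the function $t \mapsto \mathcal{M}(\varphi_t)$ is convex (and continuous) along the $C^{1,1}$-geodesic $(\varphi_t)_{t \in [0,1]}$ joining any two K\"ahler potentials in $\alpha$. Fix an arbitrary potential $\varphi$ and take $(\varphi_t)$ with $\varphi_0$ a potential of $\omega_{\mathrm{cscK}}$ and $\varphi_1 = \varphi$. Since $\omega_{\mathrm{cscK}}$ has constant scalar curvature it is a critical point of $\mathcal{M}$, so the one-sided derivative of $t \mapsto \mathcal{M}(\varphi_t)$ at $t = 0$ vanishes; convexity then yields $\mathcal{M}(\varphi) = \mathcal{M}(\varphi_1) \geq \mathcal{M}(\varphi_0) = \mathcal{M}(\omega_{\mathrm{cscK}})$. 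As $\varphi$ is arbitrary, $\mathcal{M}$ is bounded below (indeed minimised) by $\mathcal{M}(\omega_{\mathrm{cscK}})$, and invoking Theorem~A~(i) finishes the proof.

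Since the statement is a corollary, I do not expect a genuine obstacle: the delicate analytic input --- the low regularity of weak geodesics, the validity of the first-variation formula at a smooth endpoint, and the approximation needed to run the convexity argument on a general, possibly non-projective, compact K\"ahler manifold --- is entirely contained in the cited theorem of Berman--Berndtsson, while the implication ``Mabuchi bounded below $\Rightarrow$ K-semistable'' is exactly the content of Theorem~A~(i). The only point deserving a remark is to check that the Mabuchi functional and the notion of K\"ahler class used in \cite{BB} coincide with those in the present transcendental framework, which is immediate because \cite{BB} already works on an arbitrary compact K\"ahler manifold $(X,\omega)$ rather than a polarised one.
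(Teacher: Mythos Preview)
Your proposal is correct and follows essentially the same approach as the paper: the paper also deduces Corollary~\ref{cor main} by citing \cite{BB} for the fact that the Mabuchi functional is bounded from below in $\alpha$ whenever $\alpha$ contains a cscK representative, and then invoking Theorem~A~(i). Your write-up in fact spells out in slightly more detail why convexity together with the critical-point property at the cscK metric yields the lower bound, but the logical structure is identical.
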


\noindent The corresponding statement in the case of a polarised manifold was first obtained by Donaldson in \cite{Donaldsoncalabi},  as an immediate consequence of the lower bound for the Calabi functional. 
See also \cite{Stoppa} and \cite{RossThomas} for related work on \emph{slope semistability}. 
The approach taken in this paper should however be compared to e.g. \cite{PRS} and  \cite{Bermantransc, Berman, BHJ1, BHJ2}, where K-semistability is derived using so called "Kempf-Ness type" formulas. By analogy to the above papers, our proof relies on establishing such formulas valid also for transcendental classes  (see Theorems B and C below), in particular relating the asymptotic slope of the K-energy along weak geodesic rays to a natural generalisation of the Donaldson-Futaki invariant. This provides a link between K-semistability (resp. uniform K-stability) and boundedness (resp. coercivity) of the Mabuchi functional, key to establishing the stability results of Theorem A. 

An underlying theme of the paper is the comparison to the extensively studied case of a polarised manifold, which becomes a "special case" in our setting. Notably, it is then known (see e.g. \cite{Bermantransc, Berman, BHJ1, BHJ2}) how to establish the sought Kempf-Ness type formulas using \emph{Deligne pairings}; a method employed by Phong-Ross-Sturm in \cite{PRS} (for further background on the Deligne pairing construction, cf. \cite{Elkik}). Unfortunately, such an approach breaks down in the case of a general K\"ahler class. 
In this paper, we circumvent this problem by a pluripotential approach, making use of a certain multivariate variant $\langle \varphi_0, \dots, \varphi_n \rangle_{(\theta_0, \dots \theta_n)}$  of the Monge-Amp\`ere energy functional, which turns out to play a role analogous to that of the Deligne pairing in arguments of the type \cite{PRS}. The Deligne pairing approach should also be compared to \cite{DonaldsonBC, TianBC} using Bott-Chern forms (see e.g. \ref{Example Deligne pairing} and \cite[Example 5.6]{Rubinsteinbook}).  

\begin{rem} \label{remark YTD} \emph{(Yau-Tian-Donaldson conjecture)} Combining Theorem A (ii) with \cite[Theorem 2.10]{DR} and a very recent result by R. Berman, T. Darvas and C. Lu \cite[Theorem 1.2]{BDL} we in fact further see that $(X,\alpha)$ is uniformly K-stable if $\alpha$ admits a constant scalar curvature representative and $\mathrm{Aut}_0(X) = \{0\}$. In case the automorphism group of $X$ is finite, the above thus confirms one direction of the Yau-Tian-Donaldson conjecture, here referring to its natural generalisation to the case of a general K\"ahler manifold, see Section \ref{concluding the main theorem}. 
\end{rem}

\smallskip

\subsection{Generalised K-semistability}
We briefly explain the framework we have in mind. As a starting point, there are natural generalisations of certain key concepts to the transcendental setting, a central notion being that of \emph{test configurations}. First recall that a test configuration for a polarised manifold $(X,L)$, in the sense of Donaldson, cf. \cite{Donaldsontoric}, is given in terms of a $\mathbb{C}^*$-equivariant degeneration $(\mathcal{X}, \mathcal{L})$ of $(X,L)$. It can be seen as an algebro-geometric way of compactifying the product $X \times \mathbb{C}^* \hookrightarrow \mathcal{X}$. Note that test configurations in the sense of Donaldson are now known, at least in the case of Fano manifolds, see \cite{LX}, to be equivalent to test configurations in the sense of Tian \cite{Tian}.

As remarked in \cite{Bermantransc}, a straightforward generalisation to the transcendental setting can be given by replacing the line bundles with $(1,1)-$cohomology classes. In the polarised setting we would thus consider $(\mathcal{X}, c_1(\mathcal{L}))$ as a "test configuration" for $(X, c_1(L))$, by simply replacing $\mathcal{L}$ and $L$ with their respective first Chern classes. The \emph{details} of how to formulate a good definition of such a generalised test configuration have, however, not yet been completely clarified. The definition given in this paper is motivated by a careful comparison to the usual polarised case, where we ensure that a number of basic but convenient tools still hold, cf. Section \ref{test config}. In particular, our notion of K-semistability coincides precisely with the usual one whenever we restrict to the case of an integral class, cf. Proposition \ref{prop comparison integral class}. 
We will refer to such generalised test configurations as \emph{cohomological}.

\begin{mydef}  \emph{(Cohomological test configuration)} A cohomological test configuration for $(X, \alpha)$ is a pair $(\mathcal{X},\mathcal{A})$ where $\mathcal{X}$ is a test configuration for $X$ \emph{(see Definition \ref{test config definition})} and $\mathcal{A} \in  H^{1,1}_{\mathrm{BC}}(\mathcal{X},\mathbb{R})^{\mathbb{C}^*}$ is a $\mathbb{C}^*$-invariant $(1,1)-$Bott-Chern cohomology class whose image under the canonical $\mathbb{C}^*$-equivariant isomorphism 
$$\mathcal{X} \setminus \mathcal{X}_0 \simeq X \times (\mathbb{P}^1 \setminus \{0\})$$ is $p_1^*\alpha$, see \eqref{equiviso}. Here $p_1: X \times \mathbb{P}^1 \rightarrow X$ denotes the first projection.  
\end{mydef}

\begin{rem}  Note that the definition is given directly over $\mathbb{P}^1$ so that we consider the Bott-Chern cohomology on a compact K\"ahler normal complex space. In the polarised case, defining a test configuration over $\mathbb{C}$ or over $\mathbb{P}^1$ is indeed equivalent, due to the existence of a natural $\mathbb{C}^*$-equivariant compactification over $\mathbb{P}^1$.
\end{rem}

\noindent In practice, it will be enough to consider the situation when the total space $\mathcal{X}$ is smooth and dominates $X \times \mathbb{P}^1$, with $\mu: \mathcal{X} \rightarrow X \times \mathbb{P}^1$ the corresponding canonical $\mathbb{C}^*$-equivariant bimeromorphic morphism. Moreover, if $(\mathcal{X}, \mathcal{A})$ is a cohomological test configuration for $(X,\alpha)$ with $\mathcal{X}$ as above, then $\mathcal{A}$ is always of the form $\mathcal{A} = \mu^*p_1^*\alpha + [D]$, for a unique $\mathbb{R}$-divisor $D$ supported on the central fiber $\mathcal{X}_0$, cf. Proposition \ref{divisor representation}. A cohomological test configuration can thus be characterised by an $\mathbb{R}$-divisor, clarifying the relationship between the point of view of $\mathbb{R}$-divisors and our cohomological approach to "transcendental K-semistability".


A straightforward generalisation of the Donaldson-Futaki invariant can be defined based on the intersection theoretic characterisation of \cite{Wang} and \cite{Odaka}. Indeed, we define the \emph{Donaldson-Futaki invariant} associated to a cohomological test configuration $(\mathcal{X},\mathcal{A})$ for $(X,\alpha)$ as the following intersection number
\begin{equation} \label{equation DF definition}
\mathrm{DF}(\mathcal{X}, \mathcal{A}) := \frac{\bar{\mathcal{S}}}{n+1} V^{-1} (\mathcal{A}^{n+1})_{\mathcal{X}} + V^{-1}(K_{\mathcal{X}/\mathbb{P}^1} \cdot \mathcal{A}^n)_{\mathcal{X}},
\end{equation}
computed on the (compact) total space $\mathcal{X}$. Here $V$ and $\bar{\mathcal{S}}$ are cohomological constants denoting the K\"ahler volume and mean scalar curvature of $(X,\alpha)$ respectively. 

Finally, we say that $(X,\alpha)$ is \emph{K-semistable} if $\mathrm{DF}(\mathcal{X}, \mathcal{A}) \geq 0$ for all cohomological test configurations $(\mathcal{X}, \mathcal{A})$ for $(X,\alpha)$ where the class $\mathcal{A}$ is \emph{relatively K\"ahler}, i.e. there is a K\"ahler form $\beta$ on $\mathbb{P}^1$ such that $\mathcal{A} + \pi^*\beta$ is K\"ahler on $\mathcal{X}$. Generalised notions of (uniform) K-stability are defined analogously.

\subsection{Transcendental Kempf-Ness type formulas}

As previously stated, a central part of this paper consists in establishing a Kempf-Ness type formula connecting the \emph{Donaldson-Futaki invariant} (in the sense of \eqref{equation DF definition} above) with the \emph{asymptotic slope of the K-energy} along certain weak geodesic rays. In fact, we first prove the following result, which is concerned with asymptotics of a certain multivariate analogue of the Monge-Amp\`ere energy, cf. Section \ref{multivariate energy functional} for its definition. It turns out to be very useful for establishing a similar formula for the K-energy (cf. Remark \ref{introremark}), but may also be of independent interest. 

\noindent In what follows, we will work on the level of potentials and refer the reader to Section \ref{proof B} for precise definitions. 

\begin{thmx} 
Let $X$ be a compact K\"ahler manifold of dimension $n$ and let $\theta_i$, $0 \leq i \leq n$, be closed $(1,1)$-forms on $X$. Let $(\mathcal{X}_i, \mathcal{A}_i)$ be cohomological test configurations for $(X,\alpha_i)$, where $\alpha_i := [\theta_i] \in H^{1,1}(X,\mathbb{R})$. Then, for each collection of smooth rays $(\varphi_i^t)_{t \geq 0}$, $\mathcal{C}^{\infty}$-compatible with $(\mathcal{X}_i, \mathcal{A}_i)$ respectively, the asymptotic slope of the multivariate energy functional $\langle \cdot, \dots, \cdot \rangle := \langle \cdot, \dots, \cdot \rangle_{(\theta_0, \dots, \theta_n)}$ is well-defined and satisfies
\begin{equation} \label{multivariate slope}
\frac{\langle \varphi_0^t, \dots, \varphi_n^t \rangle}{t} \longrightarrow (\mathcal{A}_0 \cdot \dots \cdot \mathcal{A}_n)
\end{equation}
as $t \rightarrow +\infty$. See \eqref{definition intersection number} for the definition of the above intersection number. 
\end{thmx}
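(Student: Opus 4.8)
The plan is to reduce the statement to a computation of intersection numbers on the total spaces $\mathcal{X}_i$ via a mixed-energy analogue of the classical fact that the Monge--Amp\`ere energy computes $\mathcal{A}^{n+1}$. First I would fix, for each $i$, a smooth $\mathbb{C}^*$-invariant form $\Theta_i$ on a simultaneous resolution $\mathcal{X}$ dominating all the $\mathcal{X}_i$ and $X\times\mathbb{P}^1$, representing (the pullback of) $\mathcal{A}_i$; since each ray $(\varphi_i^t)$ is $\mathcal{C}^\infty$-compatible with $(\mathcal{X}_i,\mathcal{A}_i)$, there is a corresponding $\mathbb{C}^*$-invariant relative potential $\Phi_i$ on $\mathcal{X}$, smooth up to the central fiber, with $\Theta_i + dd^c\Phi_i$ restricting to $\theta_i + dd^c\varphi_i^t$ on each fiber $\mathcal{X}_\tau$, $\tau = e^{-t-\sqrt{-1}s}$. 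The key identity I would aim to prove is a "derivative formula": for the multivariate energy,
\begin{equation*}
\frac{d}{dt}\,\langle \varphi_0^t,\dots,\varphi_n^t\rangle = \int_{X}\sum_{j=0}^{n}\dot\varphi_j^t\,\bigwedge_{k\neq j}(\theta_k + dd^c\varphi_k^t),
\end{equation*}
which is essentially the defining variational property of $\langle\cdot,\dots,\cdot\rangle_{(\theta_0,\dots,\theta_n)}$ recalled in Section \ref{multivariate energy functional}, symmetrized over the slots. Integrating this against $dt\wedge ds$ over $\{0\le t\le T\}\times S^1$ and using Stokes/fiber integration, the right-hand side becomes, up to a boundary term at $t=0$ which is $O(1)$, the integral over $\mathcal{X}_{\{t\le T\}}$ of $\prod_i(\Theta_i + dd^c\Phi_i)$; equivalently $\langle\varphi_0^T,\dots,\varphi_n^T\rangle = \int_{\{0\le t\le T\}}\prod_i(\Theta_i+dd^c\Phi_i) + O(1)$.

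Next I would compactify: because the forms $\Theta_i + dd^c\Phi_i$ extend as closed currents across $\mathcal{X}_0$ and (by $\mathcal{C}^\infty$-compatibility) are smooth up to the central fiber, and because on $\{t\ge T_0\}$ they are just $p_1^*\theta_i$ (pulled back from $X$, hence contributing nothing to an $(n+1)$-fold product in the $\mathbb{P}^1$-direction after fiber integration — this is where $\dim X = n$ enters), the integral $\int_{\{0\le t\le T\}}\prod_i(\Theta_i+dd^c\Phi_i)$ differs from $T\cdot(\mathcal{A}_0\cdots\mathcal{A}_n)_{\mathcal{X}}$ by a bounded quantity. Dividing by $T$ and letting $T\to\infty$ then yields \eqref{multivariate slope}, with the intersection number interpreted as in \eqref{definition intersection number} and independent of the choice of resolution by the projection formula. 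One should check separately that the limit exists (monotonicity or the explicit $O(1)$ bound gives this) and is independent of the choice of compatible rays, which follows since any two such rays differ by a family of potentials bounded uniformly in $t$, contributing only $O(1)$ to the energy by the cocycle/quasi-linearity properties of $\langle\cdot,\dots,\cdot\rangle$.

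The main obstacle I anticipate is making the boundary analysis near $\mathcal{X}_0$ rigorous: the relative potentials $\Phi_i$ are smooth on $\mathcal{X}\setminus\mathcal{X}_0$ and extend to the resolution, but the divisorial part $[D_i]$ of $\mathcal{A}_i$ means $\Theta_i + dd^c\Phi_i$ picks up currents of integration along components of $\mathcal{X}_0$, so the product $\prod_i(\Theta_i+dd^c\Phi_i)$ must be given meaning as a Bedford--Taylor/Bott-Chern intersection and shown to agree with the cohomological intersection number on $\mathcal{X}$. I would handle this by working on the smooth resolution where everything is cohomological, invoking the regularization and continuity results for mixed Monge--Amp\`ere operators along with the $\mathbb{C}^*$-invariance to control the contribution of the central fiber, exactly as in the single-variable case treated in the polarised literature; the multivariate bookkeeping is notationally heavier but structurally identical.
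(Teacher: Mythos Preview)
Your overall architecture---pull back to a common smooth dominating model, extend the potentials across the central fiber via $\mathcal{C}^\infty$-compatibility, and compute the wedge product of the resulting closed $(1,1)$-forms---matches the paper's. But the core computation is off by one derivative, and this breaks the argument as written.

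Write $u(t) = \langle \varphi_0^t,\dots,\varphi_n^t\rangle$. The fiber-integration identity that actually holds (this is Proposition~\ref{second order variation multilinear energy}) is
\[
\pi_*\Big(\bigwedge_i(\Omega_i + dd^c\Psi_i)\Big) \;=\; dd^c_\tau\, u(\tau),
\]
a \emph{second}-order statement. Applying Stokes/Green--Riesz over the annulus $\{e^{-T}\le |\tau|\le 1\}$ therefore gives
\[
\int_{\pi^{-1}(\{0\le t\le T\})}\bigwedge_i(\Omega_i+dd^c\Psi_i) \;=\; u'(T) - u'(0),
\]
not $u(T)-u(0)$. Your two displayed claims, namely $u(T) = \int_{\{0\le t\le T\}}\prod_i(\Theta_i+dd^c\Phi_i) + O(1)$ and $\int_{\{0\le t\le T\}}\prod_i(\Theta_i+dd^c\Phi_i) = T\cdot(\mathcal{A}_0\cdots\mathcal{A}_n) + O(1)$, are both false: the region integral is \emph{bounded} in $T$ (it converges to the full intersection number on the compact $\mathcal{X}$ as $T\to\infty$, since the $\Omega_i+dd^c\Psi_i$ are smooth), while $u(T)$ grows linearly. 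The first-derivative formula you wrote down, integrated in $t$, just reproduces $u(T)-u(0)$ tautologically and does not by itself produce the $(n{+}1)$-form integral.

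What the paper does instead is: (i) use the second-order identity and Green--Riesz (integrating over $\mathbb{P}^1\setminus\Delta_\varepsilon$, not just an annulus) to get $u'(T)\to(\mathcal{A}_0\cdots\mathcal{A}_n)$; and then (ii) pass from $\lim u'(T)$ to $\lim u(T)/T$ by writing $u$ as a difference of convex functions via the multilinearity trick $\theta = (\theta + C\eta) - C\eta$ with $\eta$ K\"ahler. Step (ii) is genuinely needed and is missing from your outline. (An alternative for (ii): since the $\Psi_i$ are smooth on the compact $\mathcal{X}$, one has $|u'(T) - I| = O(e^{-2T})$, which is integrable and gives $u(T)=T\cdot I + O(1)$ directly; but you have not argued this either.)

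Finally, the ``main obstacle'' you anticipate---making sense of $\prod_i(\Theta_i+dd^c\Phi_i)$ near $\mathcal{X}_0$ as a Bedford--Taylor product with divisorial currents---is a non-issue here. The whole point of $\mathcal{C}^\infty$-compatibility is that $\Phi_i\circ\mu+\psi_{D_i}$ extends \emph{smoothly} to $\mathcal{X}$, so $\Omega_i+dd^c\Psi_i$ is a genuine smooth representative of $\mathcal{A}_i$ and the wedge product is just a product of smooth forms; no regularization or continuity results for mixed Monge--Amp\`ere operators are required.
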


\begin{rem} \label{introremark} In the setting of Hermitian line bundles, the above multivariate energy functional naturally appears as the difference (or quotient) of metrics on Deligne pairings.  
Moreover, note that the above theorem applies to e.g. Aubin's $\mathrm{J}$-functional, the Monge-Amp\`ere energy functional $\mathrm{E}$ and its 'twisted' version $\mathrm{E}^{\ric \omega}$ but \emph{not} to the K-energy $\mathrm{M}$. 
Indeed, the expression for $\mathrm{M}(\varphi_t)$ on the above form $\langle \varphi_0^t, \dots, \varphi_n^t \rangle_{(\theta_0, \dots, \theta_n)}$ involves the metric $\log(\omega + dd^c\varphi_t)^n$ on the relative canonical bundle $K_{\mathcal{X}/\mathbb{P}^1}$, which blows up close to $\mathcal{X}_0$, cf. Section \ref{proof A}.
As observed in \cite{BHJ2}, it is however possible to find functionals of the above form that 'approximate' $\mathrm{M}$ in the sense that their asymptotic slopes coincide, up to an explicit correction term that vanishes precisely when the central fiber $\mathcal{X}_0$ is reduced. 
This is a key observation.
\end{rem} 

\noindent We further remark that such a formula \eqref{multivariate slope} cannot be expected to hold unless the test configurations $(\mathcal{X}_i, \mathcal{A}_i)$ and the rays $(\varphi_i^t)$ are compatible in a certain sense. This is the role of the notion of $\mathcal{C}^{\infty}$-compatibility (as well as the $\mathcal{C}^{1,1}_{\mathbb{C}}$-compatibility used in Theorem C below). These notions may seem technical, but in fact mimic the case of a polarised manifolds, where the situation is well understood in terms of extension of metrics on line bundles, cf. Section \ref{smoothcompatibility}.


As a further important consequence of the above Theorem B we deduce that if $(\mathcal{X}, \mathcal{A})$ is a relatively K\"ahler cohomological test configuration for $(X,\alpha)$, then for each smooth ray $(\varphi_t)_{t \geq 0}$, $\mathcal{C}^{\infty}$-compatible with $(\mathcal{X},\mathcal{A})$, we have the inequality 
\begin{equation} \label{inequality}
\lim_{t \rightarrow +\infty} \frac{\mathrm{M}(\varphi_{t})}{t} \leq \mathrm{DF}(\mathcal{X}, \mathcal{A}).
\end{equation}  
This is the content of Theorem  \ref{weak Theorem C}, and should be compared to the discussion in the introduction of \cite{PRS}. As an important special case, this inequality can be seen to hold in the case of a weak geodesic ray associated to the given test configuration $(\mathcal{X}, \mathcal{A})$, cf. Section \ref{smoothcompatibility} for its construction. The inequality \eqref{inequality} is moreover enough to conclude the proof of Theorem A, as explained in Section \ref{concluding the main theorem}.

Using ideas from \cite{BHJ2} adapted to the present setting, we may further improve on formula \eqref{inequality} and compute the precise asymptotic slope of the K-energy. 
In this context, it is natural to consider the \emph{non-Archimedean Mabuchi functional} 
$$
\mathrm{M}^{\mathrm{NA}}(\mathcal{X}, \mathcal{A}) :=  \mathrm{DF}(\mathcal{X}, \mathcal{A}) + V^{-1}((\mathcal{X}_{0,\mathrm{red}} - \mathcal{X}_0) \cdot \mathcal{A}^n)_{\mathcal{X}},
$$

\noindent cf. \cite{BHJ1} and \cite{BHJ2} for an explanation of the terminology. 
It is a modification of the Donaldson-Futaki invariant which is \emph{homogeneous under finite base change}, and which satisfies $\mathrm{M}^{\mathrm{NA}}(\mathcal{X}, \mathcal{A}) \leq \mathrm{DF}(\mathcal{X}, \mathcal{A})$ with equality precisely when the central fiber $\mathcal{X}_0$ is reduced. We then have the following result, special cases of which have been obtained by previous authors in various different situations and generality.

\begin{thmx} \label{Reduced Theorem B} 
Let $(\mathcal{X}, \mathcal{A})$ be a smooth, relatively K\"ahler cohomological test configuration for $(X,\alpha)$ dominating $X \times \mathbb{P}^1$. For each subgeodesic ray $(\varphi_t)_{t \geq 0}$, $\mathcal{C}^{1,1}_{\mathbb{C}}$-compatible with $(\mathcal{X},\mathcal{A})$, the following limit is well-defined and satisfies
\begin{equation*}
\frac{\mathrm{M}(\varphi_{t})}{t} \longrightarrow \mathrm{M}^{\mathrm{NA}}(\mathcal{X}, \mathcal{A}),
\end{equation*}
as $t \rightarrow +\infty$. In particular, this result holds for the weak geodesic ray associated to $(\mathcal{X},\mathcal{A})$, constructed in Lemma \ref{weak geodesic construction}.
\end{thmx}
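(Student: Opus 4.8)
The plan is to reduce Theorem C to Theorem B by absorbing the problematic relative-canonical term into the multivariate energy framework, following the strategy of \cite{BHJ2}. Recall that for a subgeodesic ray $(\varphi_t)$ one has a decomposition of the Mabuchi functional of the form $\mathrm{M}(\varphi_t) = \bar{\mathcal{S}} V^{-1} \mathrm{E}(\varphi_t) - V^{-1}(\text{entropy term}) + V^{-1}(\text{twisted energy } \mathrm{E}^{\mathrm{Ric}\,\omega})$, where the entropy term is $\int_X \log\bigl((\omega+dd^c\varphi_t)^n / \omega^n\bigr)(\omega+dd^c\varphi_t)^n$. The energy pieces $\mathrm{E}$ and $\mathrm{E}^{\mathrm{Ric}\,\omega}$ are exactly of the multivariate form $\langle\cdot,\dots,\cdot\rangle_{(\theta_0,\dots,\theta_n)}$ covered by Theorem B, so their asymptotic slopes are intersection numbers on $\mathcal{X}$: namely $\mathrm{E}$ contributes $\frac{\bar{\mathcal{S}}}{n+1}V^{-1}(\mathcal{A}^{n+1})$ and $\mathrm{E}^{\mathrm{Ric}\,\omega}$ contributes $V^{-1}(\mu^*p_1^*(-K_X)\cdot\mathcal{A}^n)$ (with appropriate sign conventions). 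Adding these gives $\frac{\bar{\mathcal{S}}}{n+1}V^{-1}(\mathcal{A}^{n+1}) + V^{-1}((K_{\mathcal{X}/\mathbb{P}^1} - [\mathcal{X}_0] + \mu^*p_1^*K_X - \dots)\cdot\mathcal{A}^n)$; the bookkeeping of divisors on the central fiber is what must be done carefully, and the upshot should be that the non-entropy part of the slope equals $\mathrm{DF}(\mathcal{X},\mathcal{A})$ plus a correction involving $[\mathcal{X}_0]$ versus $K_{\mathcal{X}/\mathbb{P}^1}$.

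The heart of the matter is therefore the asymptotic slope of the entropy term, and this is where the $\mathcal{C}^{1,1}_{\mathbb{C}}$-compatibility hypothesis does its work. One writes $\log\bigl((\omega+dd^c\varphi_t)^n/\omega^n\bigr)$ as the difference of the potential of $(\omega+dd^c\varphi_t)^n$ viewed as a (singular) metric on $-K_X$ and the potential of $\omega^n$; pulled back to $\mathcal{X}$, the form $\omega+dd^c\varphi_t$ extends as a closed positive $(1,1)$-current in the class $\mathcal{A}$ with bounded (indeed $\mathcal{C}^{1,1}$) local potentials, so the associated metric on $K_{\mathcal{X}/\mathbb{P}^1}$ has at worst logarithmic singularities along $\mathcal{X}_0$ governed by the multiplicities of the components. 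The key computation — this is the analogue of \cite[Theorem 4.?]{BHJ2} — is that $\lim_{t\to\infty} t^{-1}\,V^{-1}\int_X \log\bigl((\omega+dd^c\varphi_t)^n/\omega^n\bigr)(\omega+dd^c\varphi_t)^n$ equals $V^{-1}\bigl(([\mathcal{X}_0] - [\mathcal{X}_{0,\mathrm{red}}])\cdot\mathcal{A}^n\bigr)$, i.e. it detects exactly the non-reducedness of the central fiber. Combining this with the previous paragraph yields $\mathrm{M}(\varphi_t)/t \to \mathrm{DF}(\mathcal{X},\mathcal{A}) + V^{-1}((\mathcal{X}_{0,\mathrm{red}} - \mathcal{X}_0)\cdot\mathcal{A}^n) = \mathrm{M}^{\mathrm{NA}}(\mathcal{X},\mathcal{A})$.

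Concretely I would organize the proof as follows. First, fix a smooth reference ray $(\psi_t)$ that is $\mathcal{C}^{\infty}$-compatible with $(\mathcal{X},\mathcal{A})$ (such a ray exists, e.g. the one underlying the weak geodesic construction of Lemma \ref{weak geodesic construction}, or any smooth extension of $\mathcal{A}$), and apply Theorem B together with the preceding paragraph's divisor bookkeeping to get $\mathrm{M}(\psi_t)/t \to \mathrm{M}^{\mathrm{NA}}(\mathcal{X},\mathcal{A})$ for this particular smooth ray — here the entropy term is controlled directly because $(\omega+dd^c\psi_t)^n$ has smooth potential away from $\mathcal{X}_0$ and prescribed log-poles along it. Second, pass from the smooth ray to an arbitrary $\mathcal{C}^{1,1}_{\mathbb{C}}$-compatible subgeodesic ray $(\varphi_t)$: since both rays are compatible with the same $(\mathcal{X},\mathcal{A})$, their potentials differ by a bounded function on $\mathcal{X}$ (uniformly in $t$ after the standard rescaling), and one shows the Mabuchi functional difference $\mathrm{M}(\varphi_t) - \mathrm{M}(\psi_t)$ is $o(t)$ — the energy parts by the cocycle property and boundedness, and the entropy parts by a convexity/monotonicity argument for the entropy along the affine segment joining $\varphi_t$ to $\psi_t$, exploiting that entropy is bounded below and that its contribution to the slope is insensitive to bounded perturbations of the potential. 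Third, conclude, and note the specialization to the weak geodesic ray of Lemma \ref{weak geodesic construction}, which is $\mathcal{C}^{1,1}_{\mathbb{C}}$-compatible by its construction. The main obstacle is the second paragraph's entropy slope computation — establishing rigorously that the logarithmic blow-up of $\log\bigl((\omega+dd^c\varphi_t)^n/\omega^n\bigr)$ near $\mathcal{X}_0$ integrates, in the limit, to exactly the multiplicity-weighted intersection number $([\mathcal{X}_0]-[\mathcal{X}_{0,\mathrm{red}}])\cdot\mathcal{A}^n$ — and a close second is the bounded-perturbation step for the entropy, which in the transcendental $\mathcal{C}^{1,1}$ setting cannot rely on the Deligne-pairing algebra available in the polarized case and instead needs pluripotential estimates (e.g. comparison of Monge-Ampère measures and a Jensen-type inequality) to absorb the error into $o(t)$.
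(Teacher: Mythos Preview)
Your approach differs substantially from the paper's, and there is a genuine gap. You propose to compute the asymptotic slope of the entropy term \emph{directly} and claim it equals $V^{-1}\bigl(([\mathcal{X}_0]-[\mathcal{X}_{0,\mathrm{red}}])\cdot\mathcal{A}^n\bigr)$. This is not correct: the slope of the Chen--Tian entropy along a compatible ray is not just the multiplicity correction. By Theorem~B the slope of the energy part $\bar{\mathcal{S}}\,\mathrm{E}-\mathrm{E}^{\mathrm{Ric}\,\omega}$ is $\frac{\bar{\mathcal{S}}}{n+1}V^{-1}(\mathcal{A}^{n+1}) - V^{-1}(\mu^*p_1^*c_1(X)\cdot\mathcal{A}^n)$, which involves $\mu^*p_1^*c_1(X)$ rather than $K_{\mathcal{X}/\mathbb{P}^1}$; the entropy slope must therefore account for the full gap $K_{\mathcal{X}/\mathbb{P}^1}+\mu^*p_1^*c_1(X)$ \emph{in addition to} the non-reducedness correction. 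Your own hedging (``the bookkeeping of divisors on the central fiber is what must be done carefully'') signals this, and you do not indicate how to actually carry out the computation of the entropy slope in the $\mathcal{C}^{1,1}_{\mathbb{C}}$ setting. The bounded-perturbation step for the entropy (your ``close second'' obstacle) is likewise not substantiated.

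The paper sidesteps both difficulties by a different mechanism: \emph{semistable reduction}. One passes via a degree-$d$ base change and resolution to a semistable model $(\mathcal{X}',\mathcal{A}')$ whose central fiber is reduced; the non-Archimedean Mabuchi functional is homogeneous under this operation, $\mathrm{M}^{\mathrm{NA}}(\mathcal{X}',\mathcal{A}') = d\cdot\mathrm{M}^{\mathrm{NA}}(\mathcal{X},\mathcal{A})$, and pulling back the ray multiplies $t$ by $d$, so it suffices to treat the reduced case, where $\mathrm{M}^{\mathrm{NA}}=\mathrm{DF}$. There one compares $\mathrm{M}$ with the auxiliary functional $\mathrm{M}_{\mathcal{B}}$ (built from a smooth metric $\mathcal{B}$ on $K_{\mathcal{X}/\mathbb{P}^1}$, whose slope is exactly $\mathrm{DF}$ by Theorem~B), and shows $\Gamma(\tau):=(\mathrm{M}-\mathrm{M}_{\mathcal{B}})(\varphi_\tau)$ is $o(t)$: the upper bound $\Gamma\le O(1)$ comes from plurisubharmonicity as in the weak version, while the lower bound follows from nonnegativity of relative entropy, $\Gamma(\tau)\ge -\log\int_X e^{\beta_\tau}$, together with the estimate $\int_X e^{\beta_\tau}\asymp t^{2(p-1)}$ (polynomial in $t$, hence its logarithm is $o(t)$) from \cite{BHJ2}. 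The semistable reduction step is the key idea you are missing.
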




%
\noindent For polarised manifolds $(X,L)$ and \emph{smooth} subgeodesic rays $(\varphi_t)_{t \geq 0}$, this precise result was proven in \cite{BHJ2} using Deligne pairings, as pioneered by Phong-Ross-Sturm in \cite{PRS} (cf. also Paul-Tian \cite{PaulTian} \cite{PaulTian2}). A formula in the same spirit has also been obtained for the so called \emph{Ding functional} when $X$ is a Fano variety, see \cite{Berman}. However, it appears as though no version of this result was previously known in the case of non-polarised manifolds. 




\subsection{Structure of the paper}

In Section \ref{Preliminaries} we fix our notation for energy functionals and subgeodesic rays. In particular, we introduce the multivariate energy functionals $\langle \cdot, \dots, \cdot \rangle_{(\theta_0, \dots, \theta_n)}$, which play a central role in this paper. 
In Section \ref{test config} we introduce our generalised notion of \emph{cohomological test configurations} and K-semistability. In the case of a polarised manifold $(X,L)$, we compare this notion to the usual algebraic one.
We also discuss classes of cohomological test configurations for which it suffices to test K-semistability, and establish a number of basic properties. 
In Section \ref{proof B} we discuss transcendental Kempf-Ness type formulas and prove Theorem B. This involves introducing natural \emph{compatibility conditions} between a ray $(\varphi_t)$ and a cohomological test configuration $(\mathcal{X}, \mathcal{A})$ for $(X,\alpha)$. As a useful special case, we discuss the weak geodesic ray associated to $(\mathcal{X}, \mathcal{A})$. 
In Section \ref{proof A} we finally apply Theorem B to yield a weak version of Theorem C, from which we in turn deduce our main result, Theorem A. By an immediate adaptation of techniques from \cite{BHJ2} we then compute the precise asymptotic slope of the Mabuchi functional, thus establishing the full Theorem C. 


\subsection{Acknowledgements}
Since the first version of this paper was made available, R. Dervan and J. Ross, independently, used similar methods to establish Theorem A (see \cite{DervanRoss}). They are further able to establish K-stability of $(X,\omega)$ whenever the automorphism group $\mathrm{Aut}(X,\omega)$ is discrete. We are grateful to them for helpful discussions on the topic of this paper and related questions. 

Moreover, it is a pleasure to thank my thesis advisors S\'ebastien Boucksom and Vincent Guedj, as well as Robert Berman and Ahmed Zeriahi for many helpful discussions and suggestions.

\medskip

\section{Preliminaries}  \label{Preliminaries}

\subsection{Notation and basic definitions} 
Let $X$ be a compact complex manifold of $\mathrm{dim}_{\mathbb{C}}X = n$ equipped with a given K\"ahler form $\omega$, i.e. a smooth real closed positive $(1,1)$-form on $X$. Denote the K\"ahler class $[\omega] \in H^{1,1}(X,\mathbb{R})$ by $\alpha$. 

In order to fix notation, let $\ric{\omega} = -dd^c\log \omega^n$ be the Ricci curvature form, where $dd^c := \frac{\sqrt{-1}}{2\pi}\partial\bar{\partial}$ is normalised so that $\ric{\omega}$ represents the first Chern class $c_1(X)$. Its trace
\begin{equation*}
\mathcal{S}(\omega) := n \frac{\ric{\omega} \wedge \omega^{n-1}}{\omega^n}
\end{equation*} 
is the scalar curvature of $\omega$. The mean scalar curvature is the cohomological constant given by
\begin{equation}
\bar{\mathcal{S}} := V^{-1} \int_X \mathcal{S}(\omega) \; \omega^n =  n\frac{\int_X c_1(X) \cdot \alpha^{n-1}}{ \int_X \alpha^n} := n  \frac{(c_1(X) \cdot  \alpha^{n-1})_X}{( \alpha^n)_X},
\end{equation}
where $V := \int_X \omega^n := ( \alpha^n)_X  $ is the K\"ahler volume. 
We say that $\omega$ is a constant scalar curvature K\"ahler (cscK) metric\footnote{By a standard abuse of notation we identify a Hermitian metric $h$ with its associated $(1,1)$-form $\omega = \omega_h$ and refer to $\omega$ as the "metric".} if $S(\omega)$ is constant (equal to $\bar{\mathcal{S}}$) on X. 

Throughout the paper we work on the level of potentials, using the notation of quasi-plurisubharmonic (quasi-psh) functions. To this end, we let $\theta$ be a closed $(1,1)$-form on $X$ and denote, as usual, by $\mathrm{PSH}(X,\theta)$ the space of $\theta$-psh functions $\varphi$ on $X$, i.e. the set of functions that can be locally written as the sum of a smooth and a plurisubharmonic function, and such that $$\theta_{\varphi} := \theta + dd^c\varphi \geq 0 $$
in the weak sense of currents. In particular, if $\omega$ is our fixed K\"ahler form on $X$, then we write 
$$
\mathcal{H}_{\omega}  := \{ \varphi \in \mathcal{C}^{\infty}(X) : \omega_{\varphi} := \omega + dd^c\varphi > 0 \} \subset \mathrm{PSH}(X,\omega)
$$ 
for the space of K\"ahler potentials on $X$. As a subset of $\mathcal{C}^{\infty}(X)$ it is convex and consists of strictly $\omega$-psh functions. It has been extensively studied (for background we refer the reader to e.g. \cite{LNM} and references therein).

Recall that a $\theta$-psh function is always upper semi-continuous (usc) on $X$, thus bounded from above by compactness. Moreover, if $\varphi_i \in \mathrm{PSH}(X,\theta) \cap L^{\infty}_{\mathrm{loc}}$, $1 \leq i \leq p \leq n$,  it follows from the work of Bedford-Taylor \cite{BTcontinuouspotentials,BT} that we can give meaning to the product $\bigwedge_{i = 1}^p (\theta + dd^c\varphi_i)$, which then defines a closed positive $(p,p)$-current on $X$. As usual, we then define the \emph{Monge-Amp\`ere measure} as the following probability measure, given by the top wedge product 
\begin{equation*}
\mathrm{MA}(\varphi) := V^{-1}(\omega + dd^c\varphi)^{n}.
\end{equation*}


\subsection{Energy functionals and a Deligne functional formalism} \label{multivariate energy functional} 
We now introduce the notation for energy functionals that we will use. Let $\varphi_i \in \mathrm{PSH}(X,\omega) \cap L^{\infty}_{\mathrm{loc}}$. The \emph{Monge-Amp\`ere energy functional} (or \emph{Aubin-Mabuchi functional}) $\mathrm{E} := \mathrm{E}_{\omega}$ is defined by 
\begin{equation*}
\mathrm{E}(\varphi) := \frac{1}{n+1}\sum_{j=0}^n V^{-1} \int_X \varphi (\omega + dd^c\varphi)^{n-j} \wedge \omega^j
\end{equation*}
(such that $\mathrm{E}' = \mathrm{MA}$ and $\mathrm{E}(0) = 0$). Similarily, if $\theta$ is any closed  $(1,1)$-form, we define a functional $\mathrm{E}^{\theta} := \mathrm{E}_{\omega}^{\theta}$ by
\begin{equation*}
\mathrm{E}^{\theta}(\varphi) :=  \sum_{j=0}^{n-1} V^{-1} \int_X \varphi (\omega + dd^c\varphi)^{n-j-1} \wedge \omega^j \wedge \theta,
\end{equation*}
and we will also have use for the Aubin $\mathrm{J}$-functional $\mathrm{J}: \mathrm{PSH}(X,\omega) \cap L^{\infty}_{\mathrm{loc}} \rightarrow \mathbb{R}_{\geq 0}$ defined by $$\mathrm{J}(\varphi) := V^{-1} \int_X \varphi \; \omega^n - \mathrm{E}(\varphi),$$ whose asymptotic slope along geodesic rays is comparable with the \emph{minimum norm} of a test configuration (see \cite{Dervanuniform, BHJ1}). 

More generally, it is possible to define a natural \emph{multivariate} version of the Monge-Amp\`ere energy, of which all of the above functionals are special cases. The construction builds on that of the \emph{Deligne pairing}, which is a powerful and general technique from algebraic geometry that we here apply to our specific setting in K\"ahler geometry. We refer the interested reader to \cite{Elkik, Zhangdelignepairings, Moriwaki} for a general treatment of Deligne pairings, as well as to \cite{PRS, Berman, BHJ2} for more recent applications related to K-stability. Now let $\theta_0, \dots, \theta_n$ be closed $(1,1)$-forms on $X$. Motivated by corresponding properties for the \emph{Deligne pairing} (cf. e.g. \cite{Berman}, \cite{Elkik} for background) we would like to consider a \emph{functional} $\langle \cdot, \dots, \cdot \rangle_{(\theta_0,\dots,\theta_n)}$ on the space $\mathrm{PSH}(X,\theta_0) \cap L^{\infty}_{\mathrm{loc}} \times \dots \times \mathrm{PSH}(X,\theta_n) \cap L^{\infty}_{\mathrm{loc}}$ ($n + 1$ times) that is
 \begin{itemize}
 \item symmetric, i.e. for any permutation $\sigma$ of the set $\{0,1, \dots,n\}$, we have
\begin{equation*}
\langle \varphi_{\sigma(0)}, \dots, \varphi_{\sigma(n)} \rangle_{(\theta_{\sigma(0)}, \dots, \theta_{\sigma(n)})} = \langle \varphi_0, \dots, \varphi_n \rangle_{(\theta_0,\dots,\theta_n)}. 
\end{equation*}  
 \item  if $\varphi_0'$ is another $\theta_i$-psh function in $\mathrm{PSH}(X,\theta) \cap L^{\infty}_{\mathrm{loc}}$, then we have a 'change of function' property
\begin{equation*}
\langle \varphi_0', \varphi_1 \dots, \varphi_n \rangle - \langle \varphi_0, \varphi_1 \dots, \varphi_n \rangle  = 
\end{equation*}
\begin{equation*}
 \int_X (\varphi_0' - \varphi_0) \; (\omega_1 + dd^c\varphi_1) \wedge \dots \wedge (\omega_n + dd^c\varphi_n). 
\end{equation*}
\end{itemize}  
Demanding that the above properties hold necessarily leads to the following definition of \emph{Deligne functionals}, that will provide a useful terminology for this paper; 

\begin{mydef} \label{Multivariate energy def} Let $\theta_0, \dots, \theta_n$ be closed $(1,1)$-forms on $X$. Define a \emph{multivariate energy functional} $\langle \cdot, \dots, \cdot \rangle_{(\theta_0,\dots,\theta_n)}$ on the space $\mathrm{PSH}(X,\theta_0) \cap L^{\infty}_{\mathrm{loc}} \times \dots \times \mathrm{PSH}(X,\theta_n) \cap L^{\infty}_{\mathrm{loc}}$ \emph{($n + 1$ times)} by 
\begin{equation*}
 \langle\varphi_0, \dots, \varphi_n \rangle_{(\theta_0,\dots,\theta_n)} :=  \int_X \varphi_0 \; (\theta_1 + dd^c\varphi_1) \wedge \dots \wedge (\theta_n + dd^c\varphi_n)
\end{equation*} 
\begin{equation*}
 + \int_X \varphi_1 \; \theta_0 \wedge (\theta_2 + dd^c\varphi_2) \wedge \dots \wedge (\theta + dd^c\varphi_n) +  \dots + \int_X \varphi_n \; \theta_0 \wedge \dots \wedge \theta_{n-1}.
\end{equation*}
\end{mydef} 

\begin{rem} The multivariate energy functional $\langle \cdot, \dots, \cdot \rangle_{(\theta_0,\dots,\theta_n)}$ can also be defined on $\mathcal{C}^{\infty}(X) \times \dots \times \mathcal{C}^{\infty}(X)$ by the same formula. In Sections \ref{proof B} and \ref{proof A} it will be interesting to consider both the smooth case and the case of locally bounded $\theta_i$-psh functions.
\end{rem}

\noindent Using integration by parts one can check that this functional is indeed symmetric.

\begin{prop}
The functional $\langle \cdot, \dots, \cdot \rangle_{(\theta_0,\dots,\theta_n)}$ is symmetric. 
\end{prop}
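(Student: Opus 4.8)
The plan is to show that the defining formula for $\langle \varphi_0, \dots, \varphi_n \rangle_{(\theta_0,\dots,\theta_n)}$ is unchanged under swapping two adjacent arguments, since adjacent transpositions generate the symmetric group $S_{n+1}$ and the general permutation case then follows by composition. So it suffices to prove, for each fixed $k$ with $0 \le k \le n-1$, that exchanging $(\varphi_k,\theta_k)$ with $(\varphi_{k+1},\theta_{k+1})$ leaves the expression invariant. Writing out the two terms in the sum that involve $\varphi_k$ and $\varphi_{k+1}$ (all other terms being manifestly unaffected, since $\theta_k$ and $\theta_{k+1}$ enter the remaining wedge products only through the symmetric combination $\theta_k \wedge \theta_{k+1}$ or through $\theta_{k+1}+dd^c\varphi_{k+1}$ and $\theta_k + dd^c\varphi_k$ both of which appear), the difference between the original and the swapped expression reduces to verifying an identity of the shape
\begin{equation*}
\int_X \varphi_k \, \Omega \wedge (\theta_{k+1} + dd^c\varphi_{k+1}) + \int_X \varphi_{k+1} \, \Omega \wedge \theta_k = \int_X \varphi_{k+1} \, \Omega \wedge (\theta_k + dd^c\varphi_k) + \int_X \varphi_k \, \Omega \wedge \theta_{k+1},
\end{equation*}
where $\Omega$ is the common closed positive $(n-1,n-1)$-current built from the arguments with index $< k$ (as currents of the form $\theta_i + dd^c\varphi_i$) and index $> k+1$ (as the bare forms $\theta_i$), which is legitimate by Bedford--Taylor theory since all the $\varphi_i$ are locally bounded.

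Rearranging, this identity is equivalent to
\begin{equation*}
\int_X \varphi_k \, \Omega \wedge dd^c\varphi_{k+1} = \int_X \varphi_{k+1} \, \Omega \wedge dd^c\varphi_k,
\end{equation*}
which is precisely the statement that integration by parts holds for the Bedford--Taylor product against a closed positive current $\Omega$ of complementary bidegree. First I would reduce to this symmetrized identity by careful bookkeeping of which term contains which $\varphi_i$ in Definition \ref{Multivariate energy def}; then I would invoke the standard integration-by-parts formula for locally bounded quasi-psh functions (e.g. from \cite{BTcontinuouspotentials,BT}, or the references in \cite{LNM}), noting that $X$ is compact without boundary so no boundary terms appear, and that $\Omega$ being closed is exactly what makes $\int \varphi_k \, \Omega \wedge dd^c\varphi_{k+1} - \int \varphi_{k+1}\,\Omega \wedge dd^c\varphi_k = 0$ after moving the $dd^c$ across. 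Finally I would remark that the same proof applies verbatim in the smooth case, where integration by parts is elementary.

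The main obstacle — really the only delicate point — is justifying the integration by parts at the level of regularity available: the functions $\varphi_i$ are only assumed locally bounded and $\theta_i$-psh, not smooth, so $dd^c\varphi_k$ and $dd^c\varphi_{k+1}$ are merely currents, and one must check that $\varphi_k \, \Omega \wedge dd^c\varphi_{k+1}$ has a well-defined meaning and that the symmetric integration-by-parts identity is valid in this generality. This is exactly the content of Bedford--Taylor's work on the complex Monge--Amp\`ere operator for locally bounded plurisubharmonic functions, so the step is not hard given that we may cite those results, but it is the place where care is genuinely needed; everything else is formal manipulation of the defining sum.
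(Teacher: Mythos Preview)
Your approach is correct and essentially identical to the paper's: reduce to transpositions and invoke integration by parts in the Bedford--Taylor sense, the paper treating arbitrary transpositions $\sigma_{j,k}$ where you use adjacent ones. One minor bookkeeping slip: your description of $\Omega$ has the roles reversed (indices $<k$ carry the bare $\theta_i$, indices $>k+1$ carry $\theta_i+dd^c\varphi_i$), and $\Omega$ is closed but not necessarily \emph{positive} since the bare $\theta_i$ are only assumed closed---neither point affects the argument.
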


\begin{proof}
Since every permutation is a composition of transpositions it suffices to check the sought symmetry property for transpositions $\sigma := \sigma_{j,k}$ exchanging the position of $j,k \in \{0,1,\dots,n\}$. Suppose for simplicity of notation that $j < k$ and write $\theta_i^t := \theta_i + dd^c\varphi_i$. A straightforward computation then yields
\begin{equation*}
\langle \varphi_{0}, \dots,\varphi_j, \varphi_k, \dots \varphi_{n} \rangle_{(\theta_{0}, \dots, \theta_j, \theta_k, \dots \theta_{n})} - \langle \varphi_{0}, \dots,\varphi_k, \varphi_j, \dots \varphi_{n} \rangle_{(\theta_{0}, \dots, \theta_k, \theta_j, \dots \theta_{n})} =
\end{equation*}  
$$
 = \int_X \varphi_jdd^c\varphi_k \wedge \Theta_{j,k} - \int_X \varphi_kdd^c\varphi_j \wedge \Theta_{j,k} = 0,  
$$
where in the last step we used integration by parts and 
$$
\Theta_{j,k} := \theta_0 \wedge \dots \wedge \theta_{j-1} \wedge \theta_{j+1}^t \wedge \dots \theta_{k-1}^t \wedge \theta_{k+1}^t \wedge \theta_n^t,
$$ 
(with factors $\theta_j$ and $\theta_k^t$ omitted). The case $j > k$ follows in the exact same way, with obvious modifications to the above proof.
\end{proof}

\begin{ex} \label{Example Deligne pairing} As previously remarked, note that the above functionals can be written using the Deligne functional formalism. Indeed, if $\theta$ is a closed $(1,1)$-form on $X$, $\omega$ is a K\"ahler form on $X$ and $\varphi$ is an $\omega$-psh function on $X$, then
\begin{equation*}
\mathrm{E}(\varphi) = \frac{1}{n+1} V^{-1} \langle \varphi, \dots, \varphi \rangle_{(\omega, \dots, \omega)} \;, \; \; \mathrm{E}^{\theta}(\varphi) =  V^{-1} \langle 0,\varphi, \dots, \varphi \rangle_{(\theta, \omega, \dots, \omega)}
\end{equation*} 
and 
\begin{equation*}
\mathrm{J}(\varphi)  =  V^{-1} \langle \varphi,0, \dots, 0 \rangle_{(\omega, \dots, \omega)} - \mathrm{E}(\varphi).
\end{equation*}
Compare also \cite[Example 5.6]{Rubinsteinbook} on Bott-Chern forms. 
\end{ex}

 
\subsection{Subgeodesic rays}
Let  $(\varphi_t)_{t \geq 0} \subset \mathrm{PSH}(X,\omega)$ be a ray of $\omega$-psh functions.  Following a useful point of view of Donaldson \cite{Donaldsontoric} and Semmes \cite{Semmes}, there is a basic correspondence between the family $(\varphi_t)_{t \geq 0}$ and an associated $S^1$-invariant function $\Phi$ on  $X \times \bar{\Delta}^*$, where $\bar{\Delta}^*\subset \mathbb{C}$ denotes the pointed unit disc. We denote by $\tau$ the coordinate on $\Delta$. Explicitly, the correspondence is given by
\begin{equation*}
\Phi(x,e^{-t+is}) = \varphi^t(x),
\end{equation*}
where the sign is chosen so that $t \rightarrow +\infty$ corresponds to $\tau := e^{-t+is} \rightarrow 0$. The function $\Phi$ restricted to a fiber $X \times \{\tau\}$ thus corresponds precisely to $\varphi_t$ on $X$. In the direction of the fibers we thus have $p_1^*\omega + dd^c_{x}\Phi \geq 0$ (in the sense of currents, letting  $p_1: X \times \Delta \rightarrow X$ denote the first projection).

We will use the following standard terminology, motivated by the extensive study of (weak) geodesics in the space $\mathcal{H}$, 
see e.g. \cite{Blocki13}, \cite{Chen00}, \cite{Darvas14}, \cite{Donaldsontoric}, \cite{Semmes}. 
\begin{mydef} \label{subgeodesic definition}
We say that $(\varphi_t)_{t \geq 0}$ is a \emph{subgeodesic ray} if the associated $S^1$-invariant function $\Phi$ on $X \times \bar{\Delta}^*$ is $p_1^*\omega$-psh. Furthermore, a locally bounded family of functions $(\varphi_t)_{t \geq 0}$ in $\mathrm{PSH}(X,\omega)$ is said to be a \emph{weak geodesic ray} if the associated $S^1$-invariant function $\Phi \in \mathrm{PSH}(X \times \bar{\Delta}^*, p_1^*\omega)$ 
satisfies
\begin{equation*}
(p_1^*\omega + dd^c_{(x,\tau)}\Phi)^{n+1} = 0
\end{equation*}
on $X \times \Delta^*$. 
\end{mydef}

\begin{mydef}
Viewing the family $(\varphi_t)_{t \geq 0}$ as a map $(0,+\infty) \rightarrow \mathrm{PSH}(X,\omega)$, we say that $(\varphi^t)_{t \geq 0}$ is continuous (resp. locally bounded, smooth) if the corresponding $S^1$-invariant function $\Phi$ is continuous \emph{(}resp. locally bounded, smooth\emph{)}. 
\end{mydef}

\noindent The existence of geodesics with bounded Laplacian was proven by Chen \cite{Chen00} with complements by Blocki \cite{Blocki13}, see also e.g. \cite{Darvas14},  \cite{DL12}. We will refer to such a geodesic as being \emph{$\mathcal{C}^{1,1}_{\mathbb{C}}$-regular},
cf. Lemma \ref{weak geodesic construction} below. 

\begin{mydef}\label{Coneone def} 
We say that a function $\varphi$ is \emph{$\mathcal{C}^{1,1}_{\mathbb{C}}$-regular} if $dd^c\varphi \in L^{\infty}_{\mathrm{loc}}$, and we set $\mathcal{H}^{1,1}_{\mathbb{C}}:= \mathrm{PSH}(X,\omega) \cap \mathcal{C}^{1,1}_{\mathbb{C}}$.
\end{mydef}

\noindent Recall that a $\mathcal{C}^{1,1}_{\mathbb{C}}$-regular function is automatically $\mathcal{C}^{1,a}$-regular for all $0 < a < 1$. On the other hand, this condition is weaker than $\mathcal{C}^{1,1}$-regularity (i.e. bounded real Hessian). 

\medskip

\subsection{Second order variation of Deligne functionals}
We have the following identity for the second order variations of the multivariate energy functional $\langle \cdot, \dots, \cdot \rangle_{(\theta_0, \dots, \theta_n)}$.  

\begin{prop} \label{second order variation multilinear energy} Let $\theta_0, \dots, \theta_n$ be closed $(1,1)$-forms on $X$ and let $(\varphi_i^t)_{t \geq 0}$ be a smooth ray of smooth functions. Let $\tau := e^{-t + is}$ and consider the reparametrised ray $(\varphi_i^{\tau})_{\tau \in \bar{\Delta}^*}$. Denoting by $\Phi_i$ the corresponding $S^1$-invariant function on $X \times \Delta^*$, we have 
\begin{equation*}
dd^c_{\tau} \langle \varphi_0^{\tau}, \dots, \varphi_n^{\tau} \rangle_{(\theta_0, \dots, \theta_n)} = \int_X (p_1^*\theta_0 + dd^c_{(x,\tau)}\Phi_0) \wedge \dots \wedge (p_1^*\theta_n + dd^c_{(x,\tau)}\Phi_n) 
\end{equation*}
$$
$$
where $\int_X$ denotes fiber integration, i.e. pushforward of currents.
\end{prop}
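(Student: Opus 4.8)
The plan is to reduce the statement to the ``change of function'' property of the multivariate energy functional and the classical fact that, for a closed $(1,1)$-current $T$ on $X\times\Delta^*$ with local potential $u$, one has $dd^c_\tau\big(\int_X u\wedge(\text{fiber form})\big)$ computed by bringing the $dd^c_\tau$ inside the fiber integral. First I would fix the parametrization: set $\tau=e^{-t+is}$, so that $(x,\tau)\mapsto\varphi_i^t(x)=:\Phi_i(x,\tau)$ defines the $S^1$-invariant functions $\Phi_i$, smooth on $X\times\Delta^*$ by hypothesis, and write $\theta_i^{\Phi_i}:=p_1^*\theta_i+dd^c_{(x,\tau)}\Phi_i$ for the corresponding closed $(1,1)$-forms on $X\times\Delta^*$. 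The right-hand side is then $\int_X \theta_0^{\Phi_0}\wedge\dots\wedge\theta_n^{\Phi_n}$, fiber-integrated, which is a $(1,1)$-form (in fact a function times $dd^c\log|\tau|^2$-type object, but since $\Phi_i$ are only $S^1$-invariant rather than $\tau$-independent it is genuinely a $(1,1)$-form in $\tau$).

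The key computational step: I would differentiate the explicit formula for $\langle\varphi_0^\tau,\dots,\varphi_n^\tau\rangle_{(\theta_0,\dots,\theta_n)}$ from Definition \ref{Multivariate energy def} term by term in $\tau$. For the first term $\int_X\varphi_0^\tau\,(\theta_1+dd^c\varphi_1^\tau)\wedge\dots\wedge(\theta_n+dd^c\varphi_n^\tau)$, applying $dd^c_\tau$ and commuting it with the fiber integral (legitimate since everything is smooth) produces $\int_X dd^c_\tau\varphi_0^\tau\wedge\bigwedge_{i\geq1}(\theta_i+dd^c_x\varphi_i^\tau)$ plus mixed terms $\int_X d_\tau\varphi_0^\tau\wedge d^c_\tau(\dots)+d_\tau(\dots)\wedge d^c_\tau\varphi_0^\tau$ and the terms where $dd^c_\tau$ hits the product. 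The point is that, on $X\times\Delta^*$, one has the decomposition $dd^c_{(x,\tau)}\Phi_i=dd^c_x\Phi_i+dd^c_\tau\Phi_i+(d_x d^c_\tau+d_\tau d^c_x)\Phi_i$, and upon expanding $\bigwedge_i\theta_i^{\Phi_i}$ and fiber-integrating (so only the part of total bidegree $(1,1)$ in $\tau$ survives, i.e. exactly one factor contributes a $d_\tau$ or $d^c_\tau$ or $dd^c_\tau$), one recovers precisely the sum of all the term-by-term contributions. I would organize this as: expand the RHS as a sum over which factor ``carries'' the $\tau$-derivatives, show the factor-$0$ contribution equals $dd^c_\tau$ of the first term of the Deligne functional, and similarly match the factor-$j$ contribution; then sum. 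An alternative, perhaps cleaner, route is to first prove the identity when all $\Phi_i$ are pulled back from a common smooth family and then invoke multilinearity/symmetry, but the direct expansion seems most transparent.

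The main obstacle I anticipate is purely bookkeeping: keeping track of the cross terms $d_x d^c_\tau$ when integrating by parts in the fiber $X$ to move a $d^c_x$ off one factor and onto $\varphi_0^\tau$, and checking that all such boundary-free integrations by parts on the compact fiber $X$ combine correctly with the $\tau$-differentiated Deligne-functional terms — in particular verifying that the ``mixed'' $d_\tau\wedge d^c_\tau$ pieces coming from $dd^c_\tau$ acting on a product of fiber forms reassemble into the stated wedge product rather than leaving a residue. One clean way to avoid the messiest part is to use the change-of-function property to reduce to the case $\varphi_0^\tau\equiv 0$ (writing $\langle\varphi_0,\dots\rangle=\langle 0,\varphi_1,\dots\rangle+\int_X\varphi_0\,\theta_1^{\varphi_1}\wedge\dots$), handle the integral term directly, and induct on the number of nonzero slots; the symmetry proved above makes this reduction legitimate in every slot. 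I would also note that the result for merely $\mathcal{C}^{1,1}_{\mathbb{C}}$ or locally bounded rays, if needed later, follows by regularization, but for this proposition the smooth hypothesis makes all manipulations elementary.
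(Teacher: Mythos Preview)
Your proposal is correct and follows the same approach as the paper: the paper's own proof is a single sentence stating that the result follows from a computation relying on integration by parts, referring to \cite[Proposition 6.2]{BBGZ} as an immediate adaptation. Your detailed expansion---decomposing $dd^c_{(x,\tau)}\Phi_i$ into fiber, base, and mixed parts, fiber-integrating so only the $(1,1)$-in-$\tau$ piece survives, and matching terms via integration by parts on the compact fiber---is exactly the content of that computation, so there is nothing to add.
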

\begin{proof}
The result follows from a computation relying on integration by parts and is an immediate adaptation of for instance \cite[Proposition 6.2]{BBGZ}. 
\end{proof}

\noindent As a particular case of the above, we obtain the familiar formulas for the second order variation of $E$ and $E^{\theta}$, given by
\begin{equation*} 
dd^c_{\tau}\mathrm{E}(\varphi_{\tau}) = \frac{1}{n + 1} V^{-1} \int_X (p_1^*\omega + dd_{(x,\tau)}^c\Phi)^{n+1} 
\end{equation*}
and
\begin{equation*} 
dd^c_{\tau}\mathrm{E}^{\theta}(\varphi_{\tau}) = V^{-1} \int_X (p_1^*\omega + dd_{(x,\tau)}^c\Phi)^{n} \wedge \theta 
\end{equation*}
respectively. 
In particular, note that $\mathrm{E}(\varphi_{\tau}) := \mathrm{E} \circ \Phi$ is a subharmonic function on $\bar{\Delta}^*$. The function $t \mapsto \mathrm{E}(\varphi^{\tau})$ is \emph{affine} along weak geodesics, and \emph{convex} along subgeodesics.


\subsection{The K-energy and the Chen-Tian formula}
Let $\omega$ be a K\"ahler form on $X$ and consider any path $(\varphi_t)_{t \geq 0}$ in the space $\mathcal{H}$ of K\"ahler potentials on $X$. The \emph{Mabuchi functional} (or \emph{K-energy}) $\mathrm{M}: \mathcal{H} \rightarrow \mathbb{R}$ is then defined by its Euler-Lagrange equation
\begin{equation*} 
\frac{d}{dt} \mathrm{M}(\varphi_t) =  V^{-1} \int_X \dot{\varphi}_t(\mathcal{S}(\omega_{\varphi_t}) - \bar{\mathcal{S}})\; \omega_{\varphi_t}^n
\end{equation*} 
It is indeed independent of the path chosen and the critical points of the Mabuchi functional are precisely the cscK metrics, when they exist. By \emph{the Chen-Tian formula} \cite{Chen} it is possible to write the Mabuchi functional as a sum of an "energy" and an "entropy" part. More precisely, with our normalisations we have
\begin{equation} \label{Chens formula} 
\mathrm{M}(\varphi) = \left(\bar{\mathcal{S}} \mathrm{E}(\varphi) - \mathrm{E}^{\ric{\omega}}(\varphi)\right) + V^{-1} \int_X \log \left( \frac{(\omega + dd^c\varphi)^n}{\omega^n} \right) (\omega + dd^c\varphi)^n,
\end{equation}
where the latter term is the \emph{relative entropy} of the probability measure $\mu := \omega_{\varphi}^n/V$ with respect to the reference measure $\mu_0 := \omega^n/V$. Recall that the entropy takes values in $[0, +\infty]$ and is finite if $\mu/\mu_0$ is bounded. It can be seen to be always lower semi-continuous (lsc) in $\mu$.

%

 Following Chen \cite{Chen} (using the formula \eqref{Chens formula}) we will often work with the extension $\mathrm{M}:\mathcal{H}^{1,1}_{\mathbb{C}}\rightarrow \mathbb{R}$ of the Mabuchi functional to the space of $\omega$-psh functions with bounded Laplacian. This is a natural setting to consider, since weak geodesic rays with bounded Laplacian are known to always exist, cf. \cite{Chen00, Blocki13, Darvas14, DL12} as well as Lemma \ref{weak geodesic construction}.

For later use, we also state the following definition.

\begin{mydef} \label{definition coercivity}
The Mabuchi K-energy functional is said to be \emph{coercive} if there are constants $\delta, C > 0$ such that 
$$
\mathrm{M}(\varphi) \geq \delta \mathrm{J}(\varphi) - C 
$$
for all $\varphi \in \mathcal{H}$. 
\end{mydef}

We further recall that the Mabuchi functional is convex along weak geodesic rays, as was recently established by \cite{BB}, see also \cite{ChenLiPaun}. As a consequence of this convexity, the Mabuchi functional is bounded from below (in the given K\"ahler class) whenever $\alpha$ contains a cscK metric, see \cite{Donaldsonscalarcurvature}, \cite{ChiLi} for a proof in the polarised case and \cite{BB} for the general K\"ahler setting. 

\medskip

\section{Cohomological test configurations and K-semistability} \label{test config}
\noindent In this section we introduce a natural generalised notion of test configurations and K-semistability of $(X,\alpha)$ that has meaning even when the class $\alpha \in H^{1,1}(X,\mathbb{R})$ is non-integral (or non-rational), i.e. when $\alpha$ is not necessarily of the form $c_1(L)$ for some ample ($\mathbb{Q}$)-line bundle $L$ on $X$. As remarked by Berman in \cite{Bermantransc}, it is natural to generalise the notion of test configuration in terms of cohomology classes.  In the polarised setting the idea is to consider $(\mathcal{X}, c_1(\mathcal{L}))$ as a "test configuration" for $(X, c_1(L))$, by simply replacing $\mathcal{L}$ and $L$ with their respective first Chern classes. This approach is motivated in detail below. Moreover, a number of basic and useful properties will be established, and throughout, this generalisation will systematically be compared to the original notion of algebraic test configuration $(\mathcal{X}, \mathcal{L})$ for $(X,L)$, introduced by Donaldson in \cite{Donaldsontoric}.

\begin{rem} Much of the following exposition goes through even when the cohomology class $\alpha$ is not K\"ahler. Unless explicitly stated otherwise, we thus assume that $\alpha = [\theta]$ for some closed $(1,1)$-form $\theta$ on $X$.
\end{rem} 

\subsection{Test configurations for $X$} \label{compactification} 
We first introduce the notion of test configuration $\mathcal{X}$ for $X$, working directly over $\mathbb{P}^1$. For the sake of comparison, recall the usual concept of test configuration for polarised manifolds, see e.g. \cite{BHJ1} and \cite{Gabornotes}. In what follows, we refer to \cite{Fischer} for background on normal complex spaces. 
\begin{mydef} \label{test config definition} 
A \emph{test configuration $\mathcal{X}$ for $X$} consists of  
\begin{itemize}
\item a normal \emph{compact} K\"ahler complex space $\mathcal{X}$ with a flat \emph{(i.e. surjective)} morphism $\pi: \mathcal{X} \rightarrow \mathbb{P}^1$
\item a $\mathbb{C}^*$-action $\lambda$ on $\mathcal{X}$ lifting the canonical action on $\mathbb{P}^1$
\item a $\mathbb{C}^*$-equivariant isomorphism 
\begin{equation} \label{equiviso}
\mathcal{X} \setminus \mathcal{X}_0 \simeq X \times (\mathbb{P}^1 \setminus \{0\}).
\end{equation}
\end{itemize}
\end{mydef}


\noindent The isomorphism \ref{equiviso} gives an open embedding of $X \times (\mathbb{P}^1 \setminus \{0\})$ into $\mathcal{X}$, hence induces a canonical $\mathbb{C}^*$-equivariant bimeromorphic map $\mu: \mathcal{X} \dashrightarrow X \times \mathbb{P}^1$. We say that $\mathcal{X}$ dominates $X \times \mathbb{P}^1$ if the above bimeromorphic map $\mu$ is a morphism.
Taking $\mathcal{X}'$ to be the normalisation of the graph of $\mathcal{X} \dashrightarrow X \times \mathbb{P}^1$ we obtain a $\mathbb{C}^*$-equivariant bimeromorphic morphism $\rho: \mathcal{X}' \rightarrow \mathcal{X}$ with $\mathcal{X}'$ normal and dominating $X \times \mathbb{P}^1$. 
In the terminology of \cite{BHJ1} such a morphism $\rho$ 
is called a \emph{determination} of $\mathcal{X}$. In particular, a determination of $\mathcal{X}$ always exists. By the above considerations we will often, up to replacing $\mathcal{X}$ by $\mathcal{X}'$, be able to assume that the given test configuration for $X$ dominates $X \times \mathbb{P}^1$. 

Moreover, any test configuration $\mathcal{X}$ for $X$ can be dominated by a smooth test configuration $\mathcal{X}'$ for $X$ (where we may even assume that $\mathcal{X}'_0$ is a divisor of simple normal crossings). Indeed, by Hironaka (see \cite[Theorem 45]{Kollar} for the precise statement concerning normal complex spaces) there is a $\mathbb{C}^*$-equivariant proper bimeromorphic map $\mu: \mathcal{X}' \rightarrow \mathcal{X}$, with $\mathcal{X}'$ smooth, such that $\mathcal{X}_0'$ has simple normal crossings and $\mu$ is an isomorphism outside of the central fiber $\mathcal{X}_0$. 

As a further consequence of the isomorphism  \eqref{equiviso}, note that if $\Phi$ is a function on $\mathcal{X}$, then its restriction to each fibre $\mathcal{X}_{\tau} \simeq X$,  $\tau \in \mathbb{P}^1 \setminus \{0\}$ identifies with a function on $X$. The function $\Phi$ thus gives rise to a family of functions $(\varphi_t)_{t \geq 0}$ on $X$, recalling our convention of reparametrising so that $t := - \log |\tau|$.

\begin{rem} When $X$ is projective (hence algebraic), the GAGA principle shows that the usual (i.e. algebraic, and normal) test configurations of $X$ correspond precisely to the test configurations (in our sense of Definition \ref{test config definition}) with $\mathcal{X}$ projective. 
\end{rem}

\subsection{Cohomological test configurations for $(X,\alpha)$}
We now introduce a natural generalisation of the usual notion of \emph{algebraic test configuration} $(\mathcal{X}, \mathcal{L})$ for a polarised manifold $(X,L)$. This following definition involves the Bott-Chern cohomology on normal complex spaces, i.e. the space of locally $dd^c$-exact $(1,1)$-forms (or currents) modulo globally $dd^c$-exact $(1,1)$-forms (or currents). The Bott-Chern cohomology is finite dimensional and the cohomology classes can be pulled back. Moreover, $H^{1,1}_{\mathrm{BC}}(\mathcal{X}, \mathbb{R})$ coincides with the usual Dolbeault cohomology $H^{1,1}(\mathcal{X},\mathbb{R})$ whenever $\mathcal{X}$ is smooth. See e.g. \cite{LNM2} for background. 

\begin{mydef}  \label{cohomological test config definition} 
A \emph{cohomological test configuration for $(X, \alpha)$} is a pair $(\mathcal{X},\mathcal{A})$ where $\mathcal{X}$ is a test configuration for $X$ and $\mathcal{A} \in  H^{1,1}_{\mathrm{BC}}(\mathcal{X},\mathbb{R})^{\mathbb{C}^*}$ is a $\mathbb{C}^*$-invariant $(1,1)-$Bott-Chern cohomology class whose image under the $\mathbb{C}^*$-equivariant isomorphism $$\mathcal{X} \setminus \mathcal{X}_0 \simeq X \times (\mathbb{P}^1 \setminus \{0\}).
$$
is $p_1^*\alpha$. Here $p_1: X \times \mathbb{P}^1 \rightarrow X$ is the first projection.
\end{mydef}

\begin{mydef} We say that a test configuration $(\mathcal{X}, \mathcal{A})$ for $(X,\alpha)$  is \emph{smooth} if the total space $\mathcal{X}$ is smooth. In case $\alpha \in H^{1,1}(X,\mathbb{R})$ is K\"ahler, we say that  $(\mathcal{X}, \mathcal{A})$ is \emph{relatively K\"ahler} if the cohomology class $\mathcal{A}$ is relatively K\"ahler, i.e. there is a K\"ahler form $\beta$ on $\mathbb{P}^1$ such that $\mathcal{A} + \pi^*\beta$ is K\"ahler on $\mathcal{X}$. 
\end{mydef}

\noindent Exploiting the discussion superceding Definition \ref{test config definition}, we \emph{in practice} restrict attention to the situation when $(\mathcal{X}, \mathcal{A})$ is a smooth (cohomological) test configuration for $(X,\alpha)$ dominating $X \times \mathbb{P}^1$, with $\mu: \mathcal{X} \rightarrow X \times \mathbb{P}^1$ the corresponding $\mathbb{C}^*$-equivariant bimeromorphic morphism. 
This situation is studied in detail in Section \ref{test config divisor section}, where we in particular show that the  class $\mathcal{A} \in H^{1,1}(\mathcal{X}, \mathbb{R})$ is always of the form $\mathcal{A} = \mu^*p_1^*\alpha + [D]$ for a unique $\mathbb{R}$-divisor $D$ supported on the central fiber, cf. Proposition \ref{divisor representation}. 


It is further natural to ask how the above notion of cohomological test configurations compares to the algebraic test configurations introduced by Donaldson in \cite{Donaldsontoric}. On the one hand, we have the following example:
\begin{ex} If $(\mathcal{Y}, \mathcal{L})$ is an algebraic test configuration for $(X,L)$ and we let $\bar{\mathcal{Y}}$, $\bar{\mathcal{L}}$ and $\bar{L}$ respectively denote the $\mathbb{C}^*$-equivariant compactifications over $\mathbb{P}^1$, then $(\bar{\mathcal{Y}}, c_1(\mathcal{L}))$ is a cohomological test configuration for $(X, c_1(L))$, canonically induced by $(\mathcal{Y}, \mathcal{L})$.
\end{ex}
 
On the other hand, there is no converse such correspondence; For instance, even if $(X,L)$ is a polarised manifold there are more cohomological test configurations $(\mathcal{X},\mathcal{A})$ for $(X,c_1(L))$ than algebraic test configurations $(\mathcal{Y}, \mathcal{L})$ for $(X,L)$. 
However, we show in Proposition \ref{prop comparison integral class} that such considerations are not an issue in the study of K-semistability of $(X,\alpha)$. 


\subsection{The Donaldson-Futaki invariant and K-semistability} \label{DF invariant and K-stability}
The following generalisation of the Donaldson-Futaki invariant is straightforward.
\begin{mydef} \label{DF definition}
Let $(\mathcal{X}, \mathcal{A})$ be a cohomological test configuration for $(X,\alpha)$. The Donaldson-Futaki invariant   of $(\mathcal{X}, \mathcal{A})$ is 
\begin{equation*}
\mathrm{DF}(\mathcal{X},\mathcal{A}) := \frac{\bar{\mathcal{S}}}{n+1}V^{-1} (\mathcal{A}^{n+1})_{\mathcal{X}} + V^{-1}(K_{\mathcal{X}/\mathbb{P}^1} \cdot \mathcal{A}^n)_{\mathcal{X}}.
\end{equation*}
\end{mydef}

\noindent We recall that $\mathcal{X}$ is assumed to be \emph{compact}, cf. Definition \ref{test config definition}, and that $K_{\mathcal{X}/\mathbb{P}^1} := K_{\mathcal{X}} - \pi^*K_{\mathbb{P}^1}$ denotes the relative canonical divisor.
The point is that by results of Wang \cite{Wang} and Odaka \cite{Odaka} $\mathrm{DF}(\bar{\mathcal{Y}},c_1(\mathcal{L}))$ coincides with $\mathrm{DF}(\mathcal{Y},\mathcal{L})$ whenever $(\mathcal{Y},\mathcal{L})$ is an algebraic test configuration for a polarised manifold $(X,L)$, with $\mathcal{Y}$ normal (see the proof of Proposition \ref{prop comparison integral class}). Hence the above quantity is a generalisation of the classical Donaldson-Futaki invariant. 

The analogue of K-semistability in the context of cohomological test configurations is defined as follows. 
\begin{mydef} \label{Ksemistable definition}
We say that $(X,\alpha)$ is \emph{ K-semistable} if $\mathrm{DF}(\mathcal{X},\mathcal{A}) \geq 0$ for all  relatively K\"ahler test configurations $(\mathcal{X},\mathcal{A})$ for $(X,\alpha)$. 
\end{mydef} 

\begin{rem} \emph{With the study of K-semistability in mind, we emphasise that the Donaldson-Futaki invariant $\mathrm{DF}(\mathcal{Y},\mathcal{L})$ (cf. \cite{Wang, Odaka}) depends only on $\mathcal{Y}$ and $c_1(\mathcal{L})$. The notion of cohomological test configuration emphasises this fact. }
\end{rem}

\noindent In order to further motivate the above definitions, we now introduce a number of related concepts and basic properties that will be useful in the sequel. 

\medskip

\subsection{Test configurations characterised by $\mathbb{R}$-divisors} \label{test config divisor section}

 Recall that if $(\mathcal{X},\mathcal{L})$ is an algebraic test configuration for a polarised manifold $(X,L)$ that dominates $(X,L) \times \mathbb{C}$, then $\mathcal{L} = \mu^* p_1^*L + D$ for a unique $\mathbb{Q}$-Cartier divisor $D$ supported on $\mathcal{X}_0$, see \cite{BHJ1}. Similarily, the following result characterises the classes $\mathcal{A}$ associated to smooth and dominating cohomological test configurations, in terms of $\mathbb{R}$-divisors $D$ supported on the central fiber $\mathcal{X}_0$. 

\begin{prop} \label{divisor representation}
Let $(\mathcal{X}, \mathcal{A})$ be a smooth cohomological test configuration for $(X, \alpha)$ dominating $X \times \mathbb{P}^1$, with $\mu: \mathcal{X} \rightarrow X \times \mathbb{P}^1$  the corresponding canonical $\mathbb{C}^*$-equivariant bimeromorphic morphism. Then there exists a unique $\mathbb{R}$-divisor $D$ supported on the central fiber $\mathcal{X}_0$ such that
\begin{equation*}
\mathcal{A} = \mu^*p_1^*\alpha + [D]
\end{equation*}
in $H^{1,1}(\mathcal{X}, \mathbb{R})$.
\end{prop}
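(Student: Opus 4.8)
The plan is to exploit that $\mathcal{A}$ and $\mu^*p_1^*\alpha$ agree over $\mathcal{X} \setminus \mathcal{X}_0 \simeq X \times (\mathbb{P}^1 \setminus \{0\})$ (by definition of a cohomological test configuration), so that their difference $\mathcal{B} := \mathcal{A} - \mu^*p_1^*\alpha \in H^{1,1}(\mathcal{X},\mathbb{R})^{\mathbb{C}^*}$ restricts to zero on the complement of the central fiber. First I would reduce to the statement that any such $\mathbb{C}^*$-invariant class that vanishes on $\mathcal{X}\setminus\mathcal{X}_0$ is represented by an $\mathbb{R}$-divisor supported on $\mathcal{X}_0$. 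Writing $\mathcal{X}_0 = \sum_{i} b_i E_i$ as a Weil divisor with the $E_i$ the irreducible components (all $\mathbb{C}^*$-invariant, since $\mathbb{C}^*$ acts on $\mathcal{X}_0$ and preserves irreducible components), the candidate divisors $D$ are exactly the $\mathbb{R}$-linear combinations $\sum_i a_i E_i$; uniqueness will amount to showing the classes $[E_i] \in H^{1,1}(\mathcal{X},\mathbb{R})$ are linearly independent, modulo the one relation $[\mathcal{X}_0] = \sum_i b_i[E_i] = \pi^*[\mathrm{pt}]$ coming from pulling back a point class on $\mathbb{P}^1$ — but since $\mu^*p_1^*\alpha$ carries the "$\mathbb{P}^1$-direction" information, $D$ supported on $\mathcal{X}_0$ with the normalization baked into $\mathcal{A}$ on the generic fibre pins it down uniquely.

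The key existence step is to produce a smooth representative of $\mathcal{B}$ supported (as a current) near $\mathcal{X}_0$ and then upgrade it to a divisor class. Concretely, pick smooth closed $(1,1)$-forms $\alpha_{\mathcal{A}}, \beta$ representing $\mathcal{A}$ and $\mu^*p_1^*\alpha$ respectively; their difference $\eta := \alpha_{\mathcal{A}} - \beta$ is closed and $\mathbb{C}^*$-invariant with $[\eta] = \mathcal{B}$, and $[\eta]$ restricts to $0$ on the open set $U := \mathcal{X}\setminus\mathcal{X}_0$. Because $U \simeq X\times(\mathbb{P}^1\setminus\{0\})$ is a product with a Stein (indeed contractible-in-the-relevant-sense) second factor and $H^{1,1}_{\mathrm{BC}}(U) \cong H^{1,1}_{\mathrm{BC}}(X)$, and $\eta|_U$ represents the zero class there, one has $\eta|_U = dd^c\psi$ for some smooth function $\psi$ on $U$; by averaging over $S^1$ we may take $\psi$ to be $S^1$-invariant. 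The task is then to control the behaviour of $\psi$ near $\mathcal{X}_0$: I would argue, using the $\mathbb{C}^*$-invariance and the fact that $\eta$ extends smoothly across $\mathcal{X}_0$, that $\psi$ has at worst logarithmic growth along the components $E_i$, so that $\psi - \sum_i c_i \log|s_i|^2$ extends smoothly across $\mathcal{X}_0$ for suitable constants $c_i$ (with $s_i$ a local defining section of $E_i$). This exhibits $\eta = dd^c(\text{smooth}) + \sum_i c_i\, dd^c\log|s_i|^2 = dd^c(\text{smooth}) + \sum_i c_i [E_i]$ (up to the Poincaré–Lelong identity), giving $\mathcal{B} = [D]$ with $D = \sum_i c_i E_i$, an $\mathbb{R}$-divisor supported on $\mathcal{X}_0$. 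One should double-check the Bott-Chern vs. Dolbeault identification — harmless here since $\mathcal{X}$ is assumed smooth — and that working over $\mathbb{P}^1$ rather than $\mathbb{C}$ changes nothing, as the issue is entirely local near $\mathcal{X}_0 = \pi^{-1}(0)$.

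For uniqueness: if $[D] = [D']$ in $H^{1,1}(\mathcal{X},\mathbb{R})$ with both $D, D'$ supported on $\mathcal{X}_0$, then $D - D' = \sum_i (a_i - a_i')E_i$ is $dd^c$-exact, i.e. $\sum_i(a_i-a_i')[E_i] = 0$. I would show this forces all $a_i = a_i'$ by intersecting against suitable curves: for instance, pairing with $\mu^*p_1^*\alpha^{n-1}\cdot(\text{the class of a generic fibre component or a curve meeting }E_i)$, or more robustly, by the negativity/non-degeneracy of the intersection form on fibre components (an analogue of the Zariski-type lemma: the restriction of $[E_i]$ to a general complete curve in $\mathcal{X}_0$, together with properness of $\pi$, makes the Gram matrix $(E_i\cdot E_j\cdot \mu^*p_1^*\alpha^{n-1})$ of rank exactly $(\#\{E_i\}) - 1$ with kernel spanned by $\mathcal{X}_0$ itself). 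Since $\mathcal{X}_0 = \pi^*(0)$ and $\mu^*p_1^*\alpha$ already encodes the normalization on the generic fibre, the only ambiguity $\sum b_i E_i$ is excluded, yielding uniqueness.

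\textbf{Main obstacle.} The delicate point is the existence/regularity step: controlling the potential $\psi$ near $\mathcal{X}_0$ and proving it differs from an explicit logarithmic combination $\sum c_i\log|s_i|^2$ by a globally smooth function. The clean way is to invoke the standard fact that for a smooth compact complex manifold $\mathcal{X}$ with a reduced (or arbitrary) simple normal crossings divisor $\mathcal{X}_0$, the natural map from $\mathbb{R}$-divisors supported on $\mathcal{X}_0$ to $H^{1,1}(\mathcal{X},\mathbb{R})/H^{1,1}(\mathcal{X}\setminus\mathcal{X}_0,\mathbb{R})$ is an isomorphism onto the subspace of classes dying on the complement — this is essentially the exact sequence relating cohomology of $\mathcal{X}$, $\mathcal{X}\setminus\mathcal{X}_0$, and the Borel–Moore homology of $\mathcal{X}_0$, combined with $\mathbb{C}^*$-invariance to kill any non-divisorial contribution. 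I expect the cleanest writeup routes through this homological exact sequence (or the analogous statement for Bott–Chern cohomology of the pair) rather than through explicit potentials, with the $\mathbb{C}^*$-action doing the work of ruling out classes not of divisorial type.
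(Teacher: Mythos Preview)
Your existence argument has a genuine gap at exactly the point you flag: there is no reason, a priori, for the potential $\psi$ with $\eta|_U = dd^c\psi$ to have only logarithmic growth along the components $E_i$. Locally, $\psi$ differs from a smooth potential by a pluriharmonic function on the complement of a hypersurface, and such functions can blow up faster than $\log$ (e.g.\ $\mathrm{Re}(1/z_0)$); the $S^1$-averaging does not obviously cure this, since the $\mathbb{C}^*$-action on $\mathcal{X}$ need not act by simple rotation in a local defining coordinate for $E_i$. Your fallback to a ``standard'' exact sequence relating $H^{1,1}(\mathcal{X})$, $H^{1,1}(\mathcal{X}\setminus\mathcal{X}_0)$ and divisor classes on $\mathcal{X}_0$ is the right shape, but you would still need to justify that every class in the kernel is divisorial, and you should not expect $\mathbb{C}^*$-invariance to be what forces this.

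The paper's argument bypasses potentials entirely and is much cleaner. One picks a smooth representative $\Theta$ of $\mathcal{A}$ and forms the closed $(1,1)$-current $T := \Theta - \mu^*(\mu_*\Theta)$, which is of order $0$ and supported on the exceptional locus of $\mu$. Demailly's second theorem of support then gives $T = \sum_i \lambda_i\,\delta_{E_i}$ directly, with no analysis of growth rates needed. The remaining piece $\mu_*[\Theta] \in H^{1,1}(X\times\mathbb{P}^1)$ is decomposed via K\"unneth as $p_1^*(\text{class on }X) + c\,[X\times\{0\}]$, and restricting to a fibre over $\tau = 1$ identifies the $X$-part with $\alpha$. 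This yields $\mathcal{A} = \mu^*p_1^*\alpha + [D]$ with $D$ supported on $\mathcal{X}_0$.

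For uniqueness, your idea is correct and matches the paper's: a Zariski-type lemma shows that $[D]=0$ in $H^{1,1}(\mathcal{X})$ forces $D = c\,\mathcal{X}_0$. But your last step (``the only ambiguity $\sum b_iE_i$ is excluded'') is left vague. The paper closes this by a direct intersection computation: for any K\"ahler class $\beta$ on $X$, the projection formula gives $(\mathcal{X}_0 \cdot (\mu^*p_1^*\beta)^n)_{\mathcal{X}} = (\beta^n)_X > 0$, so $[\mathcal{X}_0]\neq 0$ in $H^{1,1}(\mathcal{X})$ and hence $c=0$.
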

\begin{proof}   
Let $\alpha := [\omega] \in H^{1,1}(X,\mathbb{R})$. We begin by proving existence: By hypothesis $\mathcal{X}$ dominates $X \times \mathbb{P}^1$ via the morphism $\mu$, such that the central fiber decomposes into the strict transform of $X \times \{0\}$ and the $\mu$-exceptional divisor. We write $\mathcal{X}_0 = \sum_i b_i E_i$,  with $E_i$ irreducible. Denoting by $[E]$ the cohomology class of $E$ and by $p_1: X \times \mathbb{P}^1 \rightarrow X$ the projection on the first factor, we then have the following formula:
\begin{lem}
$H^{1,1}(\mathcal{X}) = \mu^*p_1^*H^{1,1}(X) \; \oplus \; \bigoplus_i \mathbb{R} [E_i]$.
\end{lem}
\begin{proof} 
Let $\Theta$ be a closed $(1,1)$-form on $\mathcal{X}$. Then $T := \Theta -  \mu^*(\mu_*\Theta)$ is a closed $(1,1)$-current of order $0$ supported on $\cup_i E_i = \mathrm{Exc}(\mu)$. By Demailly's second theorem of support (see \cite{Demaillybook}) it follows that $T = \sum_i \lambda_i \delta_{E_j}$ and hence $[\Theta] = \mu^*(\mu_*[\Theta]) + \sum_i \lambda_i [E_i]$ in $H^{1,1}(\mathcal{X})$. 

Since $H^{1,1}(\mathbb{P}^1)$ is generated by $[0]$, we have $p_2^*H^{1,1}(\mathbb{P}^1) = \mathbb{R}[X \times \{0\}]$. By the K\"unneth formula, it thus follows that
$H^{1,1}(\mathcal{X}) = \mu^*p_1^*H^{1,1}(X) \; \oplus \; \mu^*(\mathbb{R}[X \times \{0\}])\; \oplus \; \bigoplus_i \mathbb{R} [E_i]$. 
\end{proof}
If we decompose $\mathcal{A}$ accordingly we obtain $\mathcal{A} = \mu^*p_1^*\eta + [D]$ with $D := \mu^*(c[X \times \{0\}]) + \bigoplus_i b_i [E_i]$ and $\eta$ a class in $H^{1,1}(X)$. The restrictions of $\mathcal{A}$ and $\mu^*p_1^*\alpha$ to $\pi^{-1}(1) \simeq X \times \{1\} \simeq X$ are identified with with $\alpha$ and $\eta$ respectively. Since $D$ is supported on $\mathcal{X}_0$ it follows that $\eta = \alpha$. We thus have the sought decomposition, proving existence. 

As for the uniqueness, we let $\mathcal{D}_0$ be the set of of $\mathbb{R}$-divisors $D$ with support contained in the central fiber $\mathcal{X}_0$. Consider the linear map
\begin{equation*}
R: \mathcal{D}_0 \rightarrow H^{1,1}(\mathcal{X})
\end{equation*}
\begin{equation*}
D \mapsto [D]
\end{equation*}
The desired uniqueness property is equivalent to injectivity of $R$. To this end, assume that $[D] = 0$ in $H^{1,1}(\mathcal{X})$. In particular $D_{\vert E_i} \equiv 0$ and it follows from a corollary of Zariski's lemma (see e.g. \cite[Lemma 8.2]{BHPV}) that $D = c\mathcal{X}_0$, with $c \in \mathbb{R}$. But, letting $\beta$ be any K\"ahler form on $X$, we see from the projection formula that
\begin{equation*}
(\mathcal{X}_0 \cdot (\mu^*p_1^*\beta)^n)_{\mathcal{X}} = ((X \times \{0\}) \cdot (p_1^*\beta)^n)_{X \times \mathbb{P}^1}  = \beta^n = V > 0,
\end{equation*} 
since $V$ is the K\"ahler volume. Hence  $[\mathcal{X}_0]$ is a non-zero class in $ H^{1,1}(\mathcal{X})$. It follows that $c = 0$, thus $D = 0$ as well. We are done.
\end{proof} 

\noindent This gives a very convenient characterisation of smooth cohomological test configurations for $(X,\alpha)$ that dominate $X \times \mathbb{P}^1$. 

\noindent In what follows, we will make use of resolution of singularities to associate a new test configuration $(\mathcal{X}', \mathcal{A}')$ for $(X,\alpha)$ to a given one, noting that this can be done \emph{without changing the Donaldson-Futaki invariant}. Indeed, by Hironaka \cite[Theorem 45]{Kollar} (see also Section \ref{test config definition}) there is a $\mathbb{C}^*$-equivariant proper bimeromorphic map $\mu: \mathcal{X}' \rightarrow \mathcal{X}$, with $\mathcal{X}'$ smooth and such that $\mathcal{X}_0'$ has simple normal crossings. Moreover, $\mu$ is an isomorphism outside of the central fiber $\mathcal{X}_0$. Set $\mathcal{A}' := \mu^*\mathcal{A}$. By the projection formula we then have 
\begin{equation*}
\mathrm{DF}(\mathcal{X}', \mathcal{A}') = \frac{\bar{\mathcal{S}}}{n+1}V^{-1} ((\mathcal{A}')^{n+1})_{\mathcal{X}'} + V^{-1}(K_{\mathcal{X}'/\mathbb{P}^1} \cdot (\mathcal{A}')^n)_{\mathcal{X}'}   
\end{equation*} 
\begin{equation*}
 = \frac{\bar{\mathcal{S}}}{n+1}V^{-1}(\mathcal{A}^{n+1})_{\mathcal{X}} + V^{-1}(K_{\mathcal{X}/\mathbb{P}^1} \cdot \mathcal{A}^n)_{\mathcal{X}} = \mathrm{DF}(\mathcal{X}, \mathcal{A}).
\end{equation*} 

The following result states that it suffices to test K-semistability for a certain class of cohomological test configurations 'characterised by an $\mathbb{R}$-divisor' (in the above sense of Proposition \ref{divisor representation}).  

\begin{prop} \label{lemma good divisor}
Let $\alpha \in H^{1,1}(X,\mathbb{R})$ be K\"ahler. Then $(X,\alpha)$ is K-semistable if and only if $\mathrm{DF}(\mathcal{X},\mathcal{A}) \geq 0$ for all smooth, relatively K\"ahler cohomological test configurations $(\mathcal{X},\mathcal{A})$ for $(X, \alpha)$ dominating $X \times \mathbb{P}^1$. 
\end{prop}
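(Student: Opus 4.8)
The "only if" direction is immediate from the definition of K-semistability (Definition \ref{Ksemistable definition}): if $\mathrm{DF}(\mathcal{X},\mathcal{A}) \geq 0$ for \emph{all} relatively Kähler cohomological test configurations, then in particular this holds for those that are smooth and dominate $X \times \mathbb{P}^1$. So the content is the converse.

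For the "if" direction, suppose $(\mathcal{X},\mathcal{A})$ is an arbitrary relatively Kähler cohomological test configuration for $(X,\alpha)$. The plan is to reduce to the smooth, dominating case in two steps, in each step controlling the effect on the Donaldson–Futaki invariant. First I would use Hironaka resolution of singularities in the equivariant setting (\cite[Theorem 45]{Kollar}, already invoked after Definition \ref{test config definition} and in the discussion following Proposition \ref{divisor representation}): there is a $\mathbb{C}^*$-equivariant proper bimeromorphic morphism $\mu \colon \mathcal{X}' \to \mathcal{X}$ with $\mathcal{X}'$ smooth, $\mathcal{X}'_0$ a simple normal crossings divisor, and $\mu$ an isomorphism outside the central fiber; moreover one can arrange that $\mathcal{X}'$ dominates $X \times \mathbb{P}^1$ (factor through the normalised graph of $\mathcal{X} \dashrightarrow X \times \mathbb{P}^1$ before resolving, as in Section \ref{compactification}). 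Set $\mathcal{A}' := \mu^*\mathcal{A}$. By the projection-formula computation already carried out in the excerpt (just before the statement of Proposition \ref{lemma good divisor}), $\mathrm{DF}(\mathcal{X}',\mathcal{A}') = \mathrm{DF}(\mathcal{X},\mathcal{A})$. So it suffices to show $(\mathcal{X}',\mathcal{A}')$ is, or can be perturbed into, a \emph{relatively Kähler} test configuration.

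The subtle point — and the main obstacle — is precisely relative Kählerness: pulling back a relatively Kähler class $\mathcal{A}$ under a bimeromorphic morphism yields a class $\mathcal{A}' = \mu^*\mathcal{A}$ that is only relatively \emph{nef and big} on $\mathcal{X}'$, not relatively Kähler, since $\mu^*$ of a Kähler class is merely semipositive along $\mu$-exceptional loci. The standard fix is a perturbation argument. By Proposition \ref{divisor representation}, write $\mathcal{A}' = \mu^* p_1^*\alpha + [D']$ with $D'$ supported on $\mathcal{X}'_0$. For $\varepsilon > 0$ small, consider a perturbed class $\mathcal{A}'_\varepsilon := \mathcal{A}' - \varepsilon [F]$ (equivalently $\mathcal{A}'_\varepsilon = \mu^*p_1^*\alpha + [D' - \varepsilon F]$), where $F$ is an effective $\mu$-exceptional divisor whose negative $-[F]$ is $\mu$-relatively Kähler — such an $F$ exists because $\mu$ is a projective (indeed blow-up) morphism, so $-\mathrm{Exc}(\mu)$ carries a $\mu$-ample, hence $\mu$-relatively Kähler, contribution on a suitable further blow-up; concretely one takes $F$ to be the exceptional divisor of the blow-up realizing the resolution and uses that $\mathcal{A}' - \varepsilon[F]$ is relatively Kähler for $0 < \varepsilon \ll 1$ (as $\mathcal{A}'$ is relatively Kähler on $\mathcal{X}$ and $-\varepsilon[F]$ is relatively Kähler for the morphism $\mu$, a standard openness/cone argument on $\mathcal{X}'$ gives that the sum is relatively Kähler on $\mathcal{X}'$). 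Then $(\mathcal{X}',\mathcal{A}'_\varepsilon)$ is a smooth, relatively Kähler cohomological test configuration dominating $X \times \mathbb{P}^1$, so by hypothesis $\mathrm{DF}(\mathcal{X}',\mathcal{A}'_\varepsilon) \geq 0$. Finally, $\mathrm{DF}$ is a polynomial (hence continuous) function of the class — it is built from the intersection numbers $(\mathcal{A}_\varepsilon^{n+1})_{\mathcal{X}'}$ and $(K_{\mathcal{X}'/\mathbb{P}^1}\cdot \mathcal{A}_\varepsilon^{n})_{\mathcal{X}'}$, which depend polynomially on $\varepsilon$ — so letting $\varepsilon \to 0^+$ gives $\mathrm{DF}(\mathcal{X}',\mathcal{A}') \geq 0$, and therefore $\mathrm{DF}(\mathcal{X},\mathcal{A}) = \mathrm{DF}(\mathcal{X}',\mathcal{A}') \geq 0$. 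Since $(\mathcal{X},\mathcal{A})$ was arbitrary, $(X,\alpha)$ is K-semistable, completing the proof.

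I expect the delicate part to be making rigorous the claim that $\mathcal{A}' - \varepsilon[F]$ is relatively Kähler on $\mathcal{X}'$ for small $\varepsilon$; this requires knowing that the resolution $\mu$ can be taken projective (a blow-up along a $\mathbb{C}^*$-invariant center) so that $-[F]$ is $\mu$-relatively Kähler, and then invoking the elementary fact that if $\beta$ is a Kähler form on $\mathbb{P}^1$ with $\mathcal{A} + \pi^*\beta$ Kähler on $\mathcal{X}$, then $\mu^*(\mathcal{A} + \pi^*\beta) - \varepsilon[F] = \mathcal{A}'_\varepsilon + (\pi')^*\beta$ is Kähler on $\mathcal{X}'$ for $0 < \varepsilon \ll 1$ (pullback of Kähler plus small relatively-Kähler-for-$\mu$ perturbation is Kähler — a form of the openness of the Kähler cone together with $\mu$-ampleness of $-F$). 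Everything else is bookkeeping with the projection formula and continuity of intersection numbers, already modeled in the excerpt.
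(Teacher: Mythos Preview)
Your proof is correct and follows essentially the same strategy as the paper: resolve to a smooth dominating $\mathcal{X}'$, note that $\mathrm{DF}$ is preserved under pullback via the projection formula, perturb the (now only relatively nef) class $\mu^*\mathcal{A}$ by a small exceptional divisor to restore relative K\"ahlerness, apply the hypothesis, and let the perturbation go to zero using continuity of $\mathrm{DF}$. The only cosmetic difference is that the paper perturbs with a $\rho$-ample exceptional divisor for $\rho:\mathcal{X}'\to X\times\mathbb{P}^1$ (then rescales to remain a test configuration for $(X,\alpha)$), whereas you perturb with a $\mu$-ample exceptional divisor for $\mu:\mathcal{X}'\to\mathcal{X}$; both work for the same reason.
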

\begin{proof}
 Let $(\mathcal{X},\mathcal{A})$ be any cohomological test configuration for $(X, \alpha)$ that is relatively K\"ahler. By Hironaka (see \cite{Kollar}) there is a sequence of blow ups $\rho: \mathcal{X}' \rightarrow X \times \mathbb{P}^1$ with smooth $\mathbb{C}^*$-equivariant centers such that $\mathcal{X}'$ simultaneously dominates $\mathcal{X}$ and $X \times \mathbb{P}^1$ via morphisms $\mu$ and $\rho$ respectively. Moreover, there is a divisor $E$ on $\mathcal{X}'$ that is $\rho$-exceptional and $\rho$-ample (and antieffective, i.e. $-E$ is effective). By Proposition \ref{divisor representation}, we have
\begin{equation*}
\mu^*\mathcal{A} =  \rho^*p_1^*\alpha + [D], 
\end{equation*}
where $D$ is an $\mathbb{R}$-divisor on $\mathcal{X}'$ supported on $\mathcal{X}'_0$. Note that the class $\mu^*\mathcal{A} \in H^{1,1}(\mathcal{X}', \mathbb{R})$ is relatively nef. 

We proceed by perturbation; Since $\alpha$ is K\"ahler on $X$, we may pick a K\"ahler class $\eta$ on $\mathbb{P}^1$ such that $p_1^*\alpha + p_2^*\eta =: \beta$ is K\"ahler on $X \times \mathbb{P}^1$. Since $E$ is $\rho$-ample one may in turn fix an $\varepsilon \in (0,1)$ sufficiently small such that $\rho^*\beta + \varepsilon[E]$ is K\"ahler on $\mathcal{X}'$. It follows that $\rho^*p_1^*\alpha + \varepsilon [E]$
is \emph{relatively} K\"ahler (with respect to $\mathbb{P}^1$) on $\mathcal{X}'$. Thus $\rho^*p_1^*\alpha + [D] + \delta(\rho^*p_1^*\alpha + \varepsilon[E]) $ is relatively K\"ahler for all $\delta \geq 0$ small enough. In turn, so is $\mathcal{A}'_{\delta} := \rho^*p_1^*\alpha + [D_{\delta}]$,
where $D_{\delta}$ denotes the convex combination $D_{\delta} := \frac{1}{1+\delta}D + \frac{\delta \varepsilon }{1 + \delta}E$.
Assuming that the $\mathrm{DF}$-invariant of a smooth and dominating test configuration is always non-negative, it follows from the projection formula and continuity of the Donaldson-Futaki invariant, that
\begin{equation*}
0 \leq \mathrm{DF}(\mathcal{X}',\mathcal{A}'_{\delta}) \longrightarrow \mathrm{DF}(\mathcal{X}',\mu^*\mathcal{A}) = \mathrm{DF}(\mathcal{X},\mathcal{A}). 
\end{equation*}
as $\delta \rightarrow 0$. The other direction holds by definition, so this proves the first part of the lemma. 

\end{proof}

\begin{rem} With respect to testing K-semistability one can in fact restrict the class of test configurations that need to be considered even further, as explained in Section \ref{further reduction techniques}.
\end{rem} 


\subsection{Cohomological K-semistability for polarised manifolds} \label{K-semistability for polarised manifolds}

It is useful to compare cohomological- and algebraic K-semistability in the special case of a polarised manifold $(X,L)$.

\begin{prop} \label{prop comparison integral class}
Let $(X,L)$ be a polarised manifold and let $\alpha:= c_1(L)$. Then $(X,c_1(L))$ is \emph{(cohomologically)} K-semistable if and only if $(X,L)$ is  \emph{(algebraically)} K-semistable. 
\end{prop}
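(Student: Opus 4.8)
The claim is an ``if and only if'' between cohomological K-semistability of $(X,c_1(L))$ and algebraic K-semistability of $(X,L)$. Because both notions are phrased in terms of non-negativity of a Donaldson--Futaki invariant, the strategy is to match up the two classes of test configurations well enough to transport the sign of $\mathrm{DF}$ in both directions. The key input is the intersection-theoretic formula for $\mathrm{DF}$ (due to Wang and Odaka), which says that for a normal algebraic test configuration $(\mathcal{Y},\mathcal{L})$ the algebraic Donaldson--Futaki invariant equals the expression $\tfrac{\bar{\mathcal{S}}}{n+1}V^{-1}(c_1(\mathcal{L})^{n+1})_{\bar{\mathcal{Y}}} + V^{-1}(K_{\bar{\mathcal{Y}}/\mathbb{P}^1}\cdot c_1(\mathcal{L})^n)_{\bar{\mathcal{Y}}}$, i.e. exactly $\mathrm{DF}(\bar{\mathcal{Y}},c_1(\mathcal{L}))$ in the sense of Definition \ref{DF definition}. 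This identity is what makes the two frameworks comparable at all.

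\textbf{The easy direction.}
First I would prove that cohomological K-semistability implies algebraic K-semistability. Let $(\mathcal{Y},\mathcal{L})$ be any algebraic test configuration for $(X,L)$; by replacing it with its normalisation (which does not decrease $\mathrm{DF}$, by the standard lower-semicontinuity/normalisation argument for Donaldson--Futaki invariants) we may assume $\mathcal{Y}$ normal, and then $\mathbb{C}^*$-equivariantly compactify over $\mathbb{P}^1$ to get $(\bar{\mathcal{Y}},\bar{\mathcal{L}})$. Then $(\bar{\mathcal{Y}}, c_1(\bar{\mathcal{L}}))$ is a cohomological test configuration for $(X,c_1(L))$, and after a small perturbation (adding a tiny ample $\mathbb{Q}$-class pulled back from $\mathbb{P}^1$, exactly as in the proof of Proposition \ref{lemma good divisor}) it is relatively K\"ahler; by continuity of $\mathrm{DF}$ in $\mathcal{A}$ and the Wang--Odaka identity, $\mathrm{DF}(\mathcal{Y},\mathcal{L}) = \mathrm{DF}(\bar{\mathcal{Y}},c_1(\bar{\mathcal{L}})) = \lim_{\varepsilon\to 0}\mathrm{DF}(\bar{\mathcal{Y}}, c_1(\bar{\mathcal{L}})+\varepsilon\,\pi^*\mathrm{(pt)}) \ge 0$ by hypothesis. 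Hence $(X,L)$ is algebraically K-semistable.

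\textbf{The hard direction and the main obstacle.}
Conversely, assume $(X,L)$ is algebraically K-semistable and let $(\mathcal{X},\mathcal{A})$ be a relatively K\"ahler cohomological test configuration for $(X,c_1(L))$; we must show $\mathrm{DF}(\mathcal{X},\mathcal{A})\ge 0$. By Proposition \ref{lemma good divisor} we may take $\mathcal{X}$ smooth, dominating $X\times\mathbb{P}^1$ via $\mu$, and by Proposition \ref{divisor representation} write $\mathcal{A} = \mu^*p_1^*c_1(L) + [D]$ for an $\mathbb{R}$-divisor $D$ supported on $\mathcal{X}_0$. The point is that $\mathcal{A}$ need \emph{not} be of the form $c_1(\mathcal{L})$ for a line bundle $\mathcal{L}$ --- the divisor $D$ has real, not rational, coefficients --- so there is no algebraic test configuration to compare with directly. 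The main obstacle is therefore to pass from a \emph{real} relatively K\"ahler class to \emph{rational} ones without losing non-negativity of $\mathrm{DF}$. I would resolve this by a density argument: the relatively K\"ahler cone of $\mathcal{X}$ (relative to $\mathbb{P}^1$) is open, and classes of the form $\mu^*p_1^*c_1(L)+[D']$ with $D'$ a $\mathbb{Q}$-divisor supported on $\mathcal{X}_0$ are dense in the affine subspace $\mu^*p_1^*c_1(L)+R(\mathcal{D}_0)$ (notation of Proposition \ref{divisor representation}); hence $\mathcal{A}$ is a limit of rational relatively K\"ahler classes $\mathcal{A}_k = c_1(\mathcal{L}_k)$ for suitable $\mathbb{C}^*$-linearised line bundles $\mathcal{L}_k$ on $\mathcal{X}$ (after clearing denominators and using relative ampleness over $\mathbb{P}^1$ to ensure $\mathcal{L}_k$ is relatively ample, possibly after a further equivariant blow-up, which does not change $\mathrm{DF}$ by the projection-formula computation in the excerpt). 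For each $k$, $(\mathcal{X},\mathcal{L}_k)$ compactifies an honest algebraic test configuration for $(X,L)$, so algebraic K-semistability plus Wang--Odaka gives $\mathrm{DF}(\mathcal{X},\mathcal{A}_k)\ge 0$; letting $k\to\infty$ and using polynomial (hence continuous) dependence of $\mathrm{DF}$ on the class yields $\mathrm{DF}(\mathcal{X},\mathcal{A})\ge 0$. The delicate point to get right is the integrality/ampleness bookkeeping: ensuring that the approximating rational classes actually come from relatively ample $\mathbb{C}^*$-linearised line bundles on a projective total space (using that $X$ projective, $L$ ample, and relative ampleness is an open condition), so that the Wang--Odaka formula genuinely applies to each $(\mathcal{X},\mathcal{A}_k)$.
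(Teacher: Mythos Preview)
Your proposal is correct and follows essentially the same approach as the paper: the Wang--Odaka intersection formula for the easy direction, and rational approximation of the $\mathbb{R}$-divisor $D$ (combined with openness of the K\"ahler cone and continuity of $\mathrm{DF}$) for the hard direction. The paper's argument is slightly leaner --- it does not perturb in the easy direction (since $c_1(\bar{\mathcal{L}})$ is already relatively K\"ahler when $\mathcal{L}$ is relatively ample), and in the hard direction it simply sets $\mathcal{L}_j := \mu^*p_1^*L + D_j$ without any further blow-up --- but these extra precautions in your write-up are harmless.
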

\begin{proof}
Suppose that $(X, c_1(L))$ is cohomologically K-semistable. If $(\mathcal{X}, \mathcal{L})$ is an ample test configuration for $(X,L)$, let $\mathcal{A} := c_1(\bar{\mathcal{L}})$. By the intersection theoretic characterisation of the Donaldson-Futaki invariant (Definition \ref{DF definition}) we then have $\mathrm{DF}(\mathcal{X},\mathcal{A}) = \mathrm{DF}(\mathcal{X},\mathcal{L}) \geq 0$. Hence $(X,L)$ is algebraically K-semistable.

Conversely, suppose that $(X,L)$ is algebraically K-semistable and let $(\mathcal{X}, \mathcal{A})$ be a cohomological test configuration for $(X,\alpha)$.  By Lemma \ref{lemma good divisor} we may assume that $(\mathcal{X}, \mathcal{A})$ is a smooth, relatively K\"ahler test configuration for $(X,\alpha)$ dominating $X \times \mathbb{P}^1$, with $\mu:  \rightarrow X \times \mathbb{P}^1$ the corresponding $\mathbb{C}^*$-equivariant bimeromorphic morphism. By Proposition \ref{divisor representation} we further have $\mathcal{A} = \mu^*p_1^*c_1(L) + [D]$ for a uniquely determined $\mathbb{R}$-divisor $D$ on $\mathcal{X}$ supported on the central fiber $\mathcal{X}_0$. Since $\mathcal{A}$ is relatively K\"ahler, there is a K\"ahler form $\eta$ on $\mathbb{P}^1$ such that $\mathcal{A} + \pi^*\eta$ is K\"ahler on $\mathcal{X}$. Approximating the coefficients of the divisor $D$ by a sequence of rationals, we write $D = \lim D_j$ for $\mathbb{Q}$-divisors $D_j$ on $\mathcal{X}$, all supported on $\mathcal{X}_0$. As $j \rightarrow +\infty$, we then have
\begin{equation*}
\mu^*p_1^*c_1(L) + [D_j] + \pi^*\eta \longrightarrow \mathcal{A} + \pi^*\eta,
\end{equation*}
which is a K\"ahler form on $\mathcal{X}$. Since the K\"ahler cone is open, it follows that $\mu^*p_1^*c_1(L) + [D_j] + \pi^*\eta$ is also K\"ahler for all $j$ large enough. 

Now let $\mathcal{L}_j := \mu^*p_1^*L + D_j$. By the above, $\mathcal{L}_j$ is a relatively ample $\mathbb{Q}$-line bundle over $\mathcal{X}$ and $c_1(\mathcal{L}_j) \rightarrow \mathcal{A}$. We thus conclude that $(\mathcal{X}, \mathcal{L}_j)$ (for all $j$ large enough) is an ample test configuration for $(X,L)$. Hence
\begin{equation*}
0 \leq \mathrm{DF}(\mathcal{X}, \mathcal{L}_j) \longrightarrow \mathrm{DF}(\mathcal{X}, \mathcal{A}). 
\end{equation*}
as $j \rightarrow +\infty$, which is what we wanted to prove.
\end{proof}


\subsection{The non-Archimedean Mabuchi functional and base change} \label{further reduction techniques} 
Let $(\mathcal{X},\mathcal{A})$ be a cohomological test configuration for $(X,\alpha)$. A natural operation on $(\mathcal{X}, \mathcal{A})$ is that of \emph{base change} (on $\mathcal{X}$ and we pull back $\mathcal{A}$). Unlike resolution of singularities, however, the $\mathrm{DF}$-invariant does not behave well under under base change. In this context, a more natural object of study is instead the \emph{non-Archimedean Mabuchi functional} $\mathrm{M}^{\mathrm{NA}}$ (first introduced in \cite{BHJ2} and \cite{BHJ1}, where also an explanation of the terminology is given).

\begin{mydef} The \emph{non-Archimedean Mabuchi functional} is the modification of the Donaldson-Futaki invariant given by 
\begin{equation*}
\mathrm{M}^{\mathrm{NA}}(\mathcal{X}, \mathcal{A}) :=  \mathrm{DF}(\mathcal{X}, \mathcal{A}) + V^{-1}((\mathcal{X}_{0,\mathrm {red}} - \mathcal{X}_0) \cdot \mathcal{A}^n)_{\mathcal{X}}.
\end{equation*}
\end{mydef} 

\noindent Note that the 'correction term' $V^{-1}((\mathcal{X}_{0,\mathrm {red}} - \mathcal{X}_0) \cdot \mathcal{A}^n)_{\mathcal{X}}$ is non-positive and vanishes precisely when the central fiber $\mathcal{X}_0$ is reduced. 
The point of adding to $\mathrm{DF}$ this additional term is that the resulting quantity $\mathrm{M}^{\mathrm{NA}}(\mathcal{X}, \mathcal{A})$ becomes \emph{homogeneous under base change}, i.e. we have the following lemma.

\begin{lem} \label{homogeneity of mabuchi} \emph{(\cite{BHJ1})} Let $(\mathcal{X},\mathcal{A})$ be a cohomological test configuration for $(X,\alpha)$ and let $d \in \mathbb{N}$. Denote by $\mathcal{X}_d$ the normalisation of the base change of $\mathcal{X}$, by $g_d: \mathcal{X}_d \rightarrow \mathcal{X}$ the corresponding morphism \emph{(of degree $d$)} and set $\mathcal{A}_d := g_d^*\mathcal{A}$. Then
\begin{equation*}
\mathrm{M}^{\mathrm{NA}}(\mathcal{X}_d, \mathcal{A}_d) = d \cdot \mathrm{M}^{\mathrm{NA}}(\mathcal{X}, \mathcal{A}).
\end{equation*}
\end{lem}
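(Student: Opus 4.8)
The plan is to reduce everything to the behaviour of the individual intersection numbers appearing in $\mathrm{DF}(\mathcal{X},\mathcal{A})$ and in the correction term under the normalised base change $g_d\colon \mathcal{X}_d\to\mathcal{X}$. First I would record the elementary fact that $g_d$ has degree $d$, so that for any $(1,1)$-classes $\gamma_1,\dots,\gamma_{n+1}$ on $\mathcal{X}$ one has the projection-formula identity $(g_d^*\gamma_1\cdot\dots\cdot g_d^*\gamma_{n+1})_{\mathcal{X}_d}=d\,(\gamma_1\cdot\dots\cdot\gamma_{n+1})_{\mathcal{X}}$. Applying this to $\gamma_i=\mathcal{A}$ immediately gives $(\mathcal{A}_d^{n+1})_{\mathcal{X}_d}=d\,(\mathcal{A}^{n+1})_{\mathcal{X}}$, which handles the first term of $\mathrm{DF}$.

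Next I would treat the relative canonical term. The key point is the Riemann–Hurwitz / adjunction formula for the normalised base change: writing $g_d$ as the composition of the finite flat degree-$d$ base change $\mathcal{X}\times_{\mathbb{P}^1}\mathbb{P}^1\to\mathcal{X}$ (ramified only over $0$ and $\infty$, which are outside where anything transcendental happens) with the normalisation map, one obtains $K_{\mathcal{X}_d/\mathbb{P}^1}=g_d^*K_{\mathcal{X}/\mathbb{P}^1}+R$, where $R$ is an explicit effective divisor supported on $(\mathcal{X}_d)_0\cup(\mathcal{X}_d)_\infty$. Precisely, if $\mathcal{X}_0=\sum_i b_iE_i$, then over the central fiber the normalisation of the base change has components $E_i'$ mapping to $E_i$ with multiplicity $b_i/\gcd(b_i,d)$, and the ramification contribution is such that $g_d^*\mathcal{X}_0-(\mathcal{X}_d)_{0,\mathrm{red}}$-type corrections appear; combining the projection formula with this adjunction formula one gets
\begin{equation*}
(K_{\mathcal{X}_d/\mathbb{P}^1}\cdot\mathcal{A}_d^n)_{\mathcal{X}_d}=d\,(K_{\mathcal{X}/\mathbb{P}^1}\cdot\mathcal{A}^n)_{\mathcal{X}}+(R\cdot\mathcal{A}_d^n)_{\mathcal{X}_d}.
\end{equation*}
Then I would compute $(R\cdot\mathcal{A}_d^n)_{\mathcal{X}_d}$ and the correction term $((\mathcal{X}_d)_{0,\mathrm{red}}-(\mathcal{X}_d)_0)\cdot\mathcal{A}_d^n$ and show that, when added, the non-reduced-fiber contributions match up so that
\begin{equation*}
(R\cdot\mathcal{A}_d^n)_{\mathcal{X}_d}+((\mathcal{X}_d)_{0,\mathrm{red}}-(\mathcal{X}_d)_0)\cdot\mathcal{A}_d^n=d\bigl((\mathcal{X}_{0,\mathrm{red}}-\mathcal{X}_0)\cdot\mathcal{A}^n\bigr)_{\mathcal{X}}.
\end{equation*}
This is the arithmetic heart of the lemma: for each component $E_i$ of multiplicity $b_i$ one checks, with $e_i:=\gcd(b_i,d)$, that $d\,(1-b_i)$ on the source corresponds to the $e_i$ preimages each contributing $(1-b_i/e_i)$ together with the ramification term, i.e. the identity $d(1-b_i)=e_i\cdot\bigl((1-\tfrac{b_i}{e_i})+(\tfrac{d}{e_i}-1)\bigr)$, weighted by $(E_i'\cdot\mathcal{A}_d^n)$ which relates to $(E_i\cdot\mathcal{A}^n)$ via the pushforward. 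Since $\bar{\mathcal{S}}$, $n$ and $V$ are fixed cohomological constants of $(X,\alpha)$ and are unchanged by base change (the isomorphism away from the central and infinite fibers identifies the generic fiber with $(X,\alpha)$), assembling the three computations gives $\mathrm{M}^{\mathrm{NA}}(\mathcal{X}_d,\mathcal{A}_d)=d\,\mathrm{M}^{\mathrm{NA}}(\mathcal{X},\mathcal{A})$.

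The main obstacle I anticipate is the bookkeeping in the last displayed identity: one must be careful that "base change" here means the \emph{normalised} base change, so the components of the central fiber acquire the divided multiplicities $b_i/\gcd(b_i,d)$ rather than $b_i$, and one must correctly track the ramification divisor of the normalisation map (as opposed to the naive base change, which is typically non-normal). Because this is exactly the computation carried out in \cite{BHJ1} in the polarised/algebraic setting, and every intersection number involved here is purely cohomological and insensitive to whether $\alpha$ is integral, the argument transfers verbatim once the projection formula and the Riemann–Hurwitz formula for normal complex spaces are in place; I would therefore present the reduction carefully and then invoke \cite{BHJ1} for the remaining combinatorial identity, or reproduce it in a few lines for completeness.
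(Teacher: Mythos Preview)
Your proposal is correct and follows the same route as the paper, which in fact gives no argument at all beyond the single sentence ``We refer the reader to \cite[Proposition 7.13]{BHJ1}, whose proof goes through in the analytic case as well.'' Your outline is precisely a sketch of that BHJ1 computation, and your closing remark---that the intersection numbers involved are purely cohomological and hence the algebraic argument transfers verbatim---is exactly the observation the paper is invoking.

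One small caution: the displayed arithmetic identity you wrote, $d(1-b_i)=e_i\bigl((1-\tfrac{b_i}{e_i})+(\tfrac{d}{e_i}-1)\bigr)$, is off (the right-hand side equals $d-b_i$, not $d-db_i$); the bookkeeping is slightly more involved because one must also subtract the pullback of the ramification of $f\colon\mathbb{P}^1\to\mathbb{P}^1$ when passing from $K_{\mathcal{X}_d}$ to $K_{\mathcal{X}_d/\mathbb{P}^1}$. The cleanest way to organise this---and the way BHJ1 does it---is to observe that
\[
\mathrm{M}^{\mathrm{NA}}(\mathcal{X},\mathcal{A})=\frac{\bar{\mathcal{S}}}{n+1}V^{-1}(\mathcal{A}^{n+1})_{\mathcal{X}}+V^{-1}\bigl(K^{\log}_{\mathcal{X}/\mathbb{P}^1}\cdot\mathcal{A}^n\bigr)_{\mathcal{X}},
\]
where $K^{\log}_{\mathcal{X}/\mathbb{P}^1}:=K_{\mathcal{X}/\mathbb{P}^1}+\mathcal{X}_{0,\mathrm{red}}-\mathcal{X}_0=(K_{\mathcal{X}}+\mathcal{X}_{0,\mathrm{red}})-\pi^*(K_{\mathbb{P}^1}+[0])$. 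The log Riemann--Hurwitz formula for the finite map $g_d$ (branched only over the reduced divisor $\mathcal{X}_{0,\mathrm{red}}\cup\mathcal{X}_{\infty}$) then gives $K^{\log}_{\mathcal{X}_d/\mathbb{P}^1}=g_d^*K^{\log}_{\mathcal{X}/\mathbb{P}^1}$ in one line, after which the projection formula finishes the proof without any component-by-component combinatorics. Since you already planned to defer this step to \cite{BHJ1}, the slip is harmless.
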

\begin{proof}
We refer the reader to \cite[Proposition 7.13]{BHJ1}, whose proof goes through in the analytic case as well. 
\end{proof}

As an application, it follows from Mumford's semistable reduction theorem (\cite[p.53]{KKMS}, see also \cite[\S 16, p.6]{KNX} for a remark on the analytic case) that there is a $d \in \mathbb{N}$, a finite base change $f:\tau \mapsto \tau^d$ (for $d$ 'divisible enough'), a smooth test configuration $\mathcal{X}'$ and a diagram 

\[\begin{tikzcd} \mathcal{X} \arrow[swap]{d}{\pi} & \arrow[swap]{l}{g_d} \mathcal{X}_d \arrow{d}{\pi_d} & \arrow[swap]{l}{\rho} \arrow{dl}{\pi'} \mathcal{X}' \\ \mathbb{P}^1 & \arrow{l}{f} \mathbb{P}^1 \end{tikzcd} \]

\noindent such that $\mathcal{X}'$ is semistable, i.e. smooth and such that $\mathcal{X}_0'$ is a reduced divisor with simple normal crossings. In particular, note that the correction term $V^{-1}((\mathcal{X}_{0,\mathrm{red}}' - \mathcal{X}_0') \cdot \mathcal{A}^n)_{\mathcal{X}'}$ vanishes. Here $\mathcal{X}_d$ denotes the normalisation of the base change, which is dominated by the semistable test configuration $\mathcal{X}'$ for $X$. Moreover, $g_d \circ \rho$ is an isomorphism over $\mathbb{P}^1 \setminus \{0\}$. 

Letting $\mathcal{A}_d := g_d^*\mathcal{A}$ be the pullback of $\mathcal{A}$ to $\mathcal{X}_d$, and $\mathcal{A}' := \rho^* \mathcal{A}_d$ the pullback to $\mathcal{X}'$, it follows from the above homogeneity of the $\mathrm{M}^{\mathrm{NA}}$ that
\begin{equation*}
\mathrm{DF}(\mathcal{X}', \mathcal{A}') = \mathrm{M}^{\mathrm{NA}}(\mathcal{X}', \mathcal{A}') =  \mathrm{M}^{\mathrm{NA}}(\mathcal{X}_d, \mathcal{A}_d) = d \cdot \mathrm{M}^{\mathrm{NA}}(\mathcal{X}, \mathcal{A}) \leq d \cdot \mathrm{DF}(\mathcal{X}, \mathcal{A}),
\end{equation*}
where $d$ is the degree of $g_d$. We have thus associated to $(\mathcal{X}, \mathcal{A})$ a new test configuration $(\mathcal{X}', \mathcal{A}')$ for $(X,\alpha)$ such that the total space $\mathcal{X}'$ is semistable. Up to replacing $\mathcal{X}'$ with a \emph{determination} (see Section \ref{compactification}) we can moreover assume that $\mathcal{X}'$ dominates $X \times \mathbb{P}^1$. Hence, the above shows that  $\mathrm{DF}(\mathcal{X}, \mathcal{A}) \geq \mathrm{DF}(\mathcal{X}', \mathcal{A}')/d$. By an argument by perturbation much as the one in the proof of Proposition \ref{lemma good divisor}, we obtain the following stronger version of the aforementioned result.

\begin{prop} \label{further simplification} Let $\alpha \in H^{1,1}(X,\mathbb{R})$ be K\"ahler. Then $(X,\alpha)$ is K-semistable \emph{(Definition \ref{Ksemistable definition})} if and only if  $\mathrm{DF}(\mathcal{X}, \mathcal{A}) \geq 0$ for all semistable, relatively K\"ahler cohomological test configurations $(\mathcal{X}, \mathcal{A})$ for $(X, \alpha)$ dominating $X \times \mathbb{P}^1$.
\end{prop}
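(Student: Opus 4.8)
The plan is to extract the statement from the material already assembled in the paragraphs preceding it, the only genuinely new ingredient being a perturbation. The ``only if'' direction is trivial: a semistable, relatively Kähler cohomological test configuration dominating $X\times\mathbb{P}^1$ is in particular a relatively Kähler cohomological test configuration for $(X,\alpha)$, so K-semistability in the sense of Definition \ref{Ksemistable definition} forces $\mathrm{DF}\ge 0$ on this smaller class.

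For the converse I would start from an arbitrary relatively Kähler cohomological test configuration $(\mathcal{X},\mathcal{A})$ for $(X,\alpha)$ and run the semistable-reduction construction recalled above: after a finite base change $\tau\mapsto\tau^d$ with $d$ divisible enough, normalisation $g_d$, a resolution $\rho$, and passage to a determination, one obtains a \emph{semistable} test configuration $\mathcal{X}'$ for $X$ dominating $X\times\mathbb{P}^1$, together with the pulled-back class $\mathcal{A}':=\rho^*g_d^*\mathcal{A}$. Since $g_d\circ\rho$ is an isomorphism over $\mathbb{P}^1\setminus\{0\}$ and base change commutes with the first projection, $\mathcal{A}'$ restricts to $p_1^*\alpha$ there, so $(\mathcal{X}',\mathcal{A}')$ is again a cohomological test configuration for $(X,\alpha)$; and since $\mathcal{X}'_0$ is reduced the $\mathrm{M}^{\mathrm{NA}}$-correction term vanishes, so exactly as in the computation preceding the statement,
\[
\mathrm{DF}(\mathcal{X},\mathcal{A})\;\ge\;\tfrac{1}{d}\,\mathrm{DF}(\mathcal{X}',\mathcal{A}').
\]
It thus suffices to prove $\mathrm{DF}(\mathcal{X}',\mathcal{A}')\ge 0$.

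The only point of substance is that $\mathcal{A}'$, being the pullback of a relatively Kähler class, is merely relatively nef, so the hypothesis does not apply to $(\mathcal{X}',\mathcal{A}')$ as it stands; this is repaired by a perturbation carried out on $\mathcal{X}'$ itself, modelled on the proof of Proposition \ref{lemma good divisor}. Writing $\psi:\mathcal{X}'\to X\times\mathbb{P}^1$ for the (projective, bimeromorphic) domination morphism---an isomorphism over $\mathbb{P}^1\setminus\{0\}$, so its exceptional divisor is supported on $\mathcal{X}'_0$---one fixes a $\psi$-exceptional, $\psi$-ample antieffective divisor $E$ and an $\varepsilon\in(0,1)$ such that $\psi^*p_1^*\alpha+\varepsilon[E]$ is relatively Kähler. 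By Proposition \ref{divisor representation}, $\mathcal{A}'=\psi^*p_1^*\alpha+[D']$ with $D'$ supported on $\mathcal{X}'_0$, and the convex combinations
\[
\mathcal{A}'_\delta:=\psi^*p_1^*\alpha+[D'_\delta],\qquad D'_\delta:=\tfrac{1}{1+\delta}\,D'+\tfrac{\delta\varepsilon}{1+\delta}\,E,
\]
of the relatively nef class $\mathcal{A}'$ and the relatively Kähler class $\psi^*p_1^*\alpha+\varepsilon[E]$ are relatively Kähler for all small $\delta>0$, while the total space $\mathcal{X}'$---hence its semistability and the domination property---is unchanged. Thus each $(\mathcal{X}',\mathcal{A}'_\delta)$ is a semistable, relatively Kähler cohomological test configuration for $(X,\alpha)$ dominating $X\times\mathbb{P}^1$; the hypothesis gives $\mathrm{DF}(\mathcal{X}',\mathcal{A}'_\delta)\ge 0$, and letting $\delta\to 0$, together with continuity of $\mathrm{DF}$ in the class, yields $\mathrm{DF}(\mathcal{X}',\mathcal{A}')\ge 0$, hence $\mathrm{DF}(\mathcal{X},\mathcal{A})\ge 0$ by the displayed inequality. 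As $(\mathcal{X},\mathcal{A})$ was arbitrary, $(X,\alpha)$ is K-semistable.

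I expect the only real work to be bookkeeping rather than anything conceptual: checking that semistable reduction genuinely outputs a cohomological test configuration for $(X,\alpha)$ (the restriction over $\mathbb{P}^1\setminus\{0\}$ staying equal to $p_1^*\alpha$ after base change, normalisation, and resolution), and that the perturbation stays inside the class of \emph{semistable} configurations---which it does precisely because perturbing $\mathcal{A}'$ leaves $\mathcal{X}'$ untouched. No estimate beyond Propositions \ref{lemma good divisor} and \ref{divisor representation} and Lemma \ref{homogeneity of mabuchi} is needed.
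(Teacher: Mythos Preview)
Your proposal is correct and follows precisely the approach the paper indicates: the paper's proof is the one-line remark ``by an argument by perturbation much as the one in the proof of Proposition~\ref{lemma good divisor}'' combined with the semistable-reduction computation immediately preceding the statement, and you have spelled out exactly that argument. The only detail worth flagging is the existence of a $\psi$-exceptional, $\psi$-ample antieffective divisor $E$ on $\mathcal{X}'$, which in Proposition~\ref{lemma good divisor} came for free because $\mathcal{X}'$ was constructed as a sequence of blow-ups of $X\times\mathbb{P}^1$; here $\mathcal{X}'$ arises via semistable reduction and determination, but since $\psi$ is still a projective bimeromorphic morphism from a smooth space this remains a standard fact.
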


\medskip

\section{Transcendental Kempf-Ness type formulas} \label{proof B} 
Let $X$ be a compact K\"ahler manifold of dimension $n$ and let $\theta_i$, $0 \leq i \leq n$, be closed $(1,1)$-forms on $X$. Let $\alpha_i := [\theta_i] \in H^{1,1}(X,\mathbb{R})$ be the corresponding cohomology classes. 
In this section we aim to prove Theorem B. In other words, we establish a Kempf-Ness type formula (for \emph{cohomological} test configurations), which connects the asymptotic slope of the multivariate energy functional $\langle \varphi_0^t,\dots,\varphi_n^t \rangle_{(\theta_0, \dots, \theta_n)}$ (see Definition \ref{multivariate energy functional}) with a certain intersection number. In order for such a result to hold, we need to ask that the rays $(\varphi_i^t)_{t \geq 0}$ are \emph{compatible} with $(\mathcal{X}_i, \mathcal{A}_i)$ in a sense that has to do with extension across the central fiber, see Section \ref{smoothcompatibility} below. 


\noindent For what follows, note that, by equivariant resolution of singularities, there is a test configuration $\mathcal{X}$ for $X$ which is smooth and dominates $X \times \mathbb{P}^1$. This setup comes with \emph{canonical}  $\mathbb{C}^*$-equivariant bimeromorphic maps $\rho_i: \mathcal{X} \rightarrow \mathcal{X}_i$ respectively. In particular:
\begin{mydef} \label{definition intersection number} 
We define the intersection number
\begin{equation*} 
(\mathcal{A}_0 \cdot \dots \cdot \mathcal{A}_n) := (\rho_0^* \mathcal{A}_0 \cdot \dots \cdot \rho_n^* \mathcal{A}_n)_{\mathcal{X}}
\end{equation*}
by means of pulling back the respective cohomology classes to $\mathcal{X}$. 
\end{mydef}

\begin{rem} Up to desingularising we can and we will in this section consider only smooth cohomological test configurations $(\mathcal{X}_i,\mathcal{A}_i)$ for $(X, \alpha_i)$ dominating $X \times \mathbb{P}^1$, with $\mu_i: \mathcal{X}_i \rightarrow X \times \mathbb{P}^1$ the corresponding $\mathbb{C}^*$-equivariant bimeromorphic morphisms respectively. We content ourselves by noting that the following $\mathcal{C}^{\infty}$-compatibility condition can be defined (much in the same way, using a desingularisation) in the singular case as well. 

\end{rem}

 

\subsection{Compatibility of rays and test configurations} \label{smoothcompatibility}
Let $(\mathcal{X},\mathcal{A})$ be a smooth (cohomological) test configuration for $(X,\alpha)$ dominating $X \times \mathbb{P}^1$, with $\mu:\mathcal{X} \rightarrow X \times \mathbb{P}^1$ the corresponding canonical $\mathbb{C}^*$-equivariant bimeromorphic morphism. We then have 
$$
\mathcal{A}=\mu^*p_1^*\alpha+[D]
$$ 
for a unique $\mathbb{R}$-divisor $D$ supported on $\mathcal{X}_0$, with $p_1:X \times \mathbb{P}^1 \rightarrow X$ denoting the first projection, cf. Proposition \ref{divisor representation}.

We fix the choice of an $S^1$-invariant function 'Green function' $\psi_D$ for $D$, so that $\delta_D=\theta_D+dd^c\psi_D$, with $\theta_D$ a smooth $S^1$-invariant closed $(1,1)$-form on $\mathcal{X}$. Locally, we thus have 
$$
\psi_D = \sum_j a_j \log|f_j| \; \; \textrm{mod} \; \mathcal{C}^{\infty}, 
$$ 
where (writing $D := \sum_j a_jD_j$ for the decomposition of $D$ into irreducible components) the $f_j$ are local defining equations for the $D_j$ respectively. 
In particular, the choice of $\psi_D$ is uniquely determined modulo a smooth function. 

The main purpose of this section is to establish Theorem B, which is a formula relating algebraic (intersection theoretic) quantities to asymptotic slopes of Deligne functionals (e.g. $\mathrm{E}$ or $\mathrm{J}$) along certain rays. However, such a formula can not hold for \emph{any} such ray. The point of the following \emph{compatibiltiy conditions} is to establish some natural situations in which this formula holds. 
Technically, recall that a ray $(\varphi_t)_{t \geq 0}$ on $X$ is in correspondence with an $S^1$-invariant functions $\Phi$ on $X \times \bar{\Delta}^*$. The proof of Theorem B below, will show that it is important to extend the function $\Phi \circ \mu$ on $\mathcal{X} \setminus \mathcal{X}_0$ also across the central fiber $\mathcal{X}_0$. 

To this end, we introduce the notions of $\mathcal{C}^{\infty}$-, $L^{\infty}$- and $\mathcal{C}^{1,1}_{\mathbb{C}}$-compatibility between the ray $(\varphi_t)_{t \geq 0}$ and the test configuration $(\mathcal{X}, \mathcal{A})$. The purpose of introducing more than one version of compatibility is that we will distuingish between the following two situations of interest to us.
\begin{enumerate}
\item smooth but not necessarily subgeodesic rays $(\varphi_t)$ that are $\mathcal{C}^{\infty}$-compatible with the smooth test configuration $(\mathcal{X}, \mathcal{A})$ for $(X,\alpha)$, dominating $X \times \mathbb{P}^1$. Here we can consider $\alpha = [\theta] \in H^{1,1}(X,\mathbb{R})$ for any closed $(1,1)$-form $\theta$ on $X$. 
\item locally bounded \emph{subgeodesic} rays $(\varphi_t)$ that are $L^{\infty}$-compatible or (more restrictively) $\mathcal{C}^{1,1}_{\mathbb{C}}$-compatible with the given smooth and \emph{relatively K\"ahler} test configuration $(\mathcal{X}, \mathcal{A})$ for $(X,\alpha)$, dominating $X \times \mathbb{P}^1$. Here we thus suppose that $\alpha$ is a K\"ahler class. 
\end{enumerate} 

\noindent Theorem B has valid formulations in both these situations, as pointed out in Remark \ref{Remark Theorem B}. The second situation is interesting notably with weak geodesic rays in mind, cf. Section \ref{section weak geodesic}. 


\subsection{$\mathcal{C}^{\infty}$-compatible rays}
We first introduce the notion of smooth (not necessarily subgeodesic) rays that are \emph{$\mathcal{C}^{\infty}$-compatible} with the given test configuration $(\mathcal{X}, \mathcal{A})$ for $(X,\alpha)$.

\begin{mydef} 
Let $(\varphi_t)_{t \geq 0}$ be a smooth ray in $\mathcal{C}^{\infty}(X)$, and denote by $\Phi$ the corresponding smooth $S^1$-invariant function on $X \times \bar{\Delta}^*$. We say that $(\varphi_t)$ and $(\mathcal{X},\mathcal{A})$ are \emph{$\mathcal{C}^{\infty}$-compatible} if $\Phi \circ \mu+\psi_D$ extends smoothly across $\mathcal{X}_0$. 
\end{mydef}
The condition is indeed independent of the choice of $\psi_D$, as the latter is well-defined modulo a smooth function. In the case of a polarised manifold $(X,L)$ with an (algebraic) test configuration $(\mathcal{X}, \mathcal{L})$ this condition amounts to demanding that the metric on $\mathcal{L}$ associated to the ray $(\varphi_t)_{t \geq 0}$ extends  smoothly across the central fiber.

As a useful 'model example' to keep in mind, let $\Omega$ be a smooth $S^1$-invariant representative of $\mathcal{A}$ and denote the restrictions  $\Omega_{\vert \mathcal{X}_{\tau}} =: \Omega_{\tau}$. Note that $\Omega_{\tau}$ and $\Omega_1$ are cohomologous for each $\tau \in \mathbb{P}^1 \setminus \{0\}$, and hence we may define a ray $(\varphi_t)_{t \geq 0}$ on $X$, $\mathcal{C}^{\infty}$-compatible with $(\mathcal{X}, \mathcal{A})$, by the following relation
$\lambda(\tau)^*\Omega_{\tau} - \Omega_1 = dd^c\varphi_{\tau}$,
where $t = -\log|\tau|$ and $\lambda(\tau): \mathcal{X}_{\tau} \rightarrow \mathcal{X}_1 \simeq X$ is the isomorphism induced by the $\mathbb{C}^*$-action $\lambda$ on $\mathcal{X}$.


We further establish  existence of a smooth $\mathcal{C}^{\infty}$-compatible \emph{subgeodesic} ray associated to a given relatively K\"ahler test configuration $(\mathcal{X}, \mathcal{A})$ for $(X,\alpha)$. 

\begin{lem}\label{lem:smooth} If $\mathcal{A}$ is relatively K\"ahler, then $(\mathcal{X},\mathcal{A})$ is $\mathcal{C}^{\infty}$-compatible with some smooth subgeodesic ray $(\varphi_t)$.
\end{lem}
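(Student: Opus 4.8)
The plan is to build the subgeodesic ray directly from a relatively K\"ahler representative of $\mathcal{A}$, using the $\mathbb{C}^*$-action to produce the ray and the $\psi_D$-correction to check compatibility. First I would choose a smooth $S^1$-invariant closed $(1,1)$-form $\Omega$ on $\mathcal{X}$ representing $\mathcal{A}$ together with a K\"ahler form $\beta$ on $\mathbb{P}^1$ such that $\Omega + \pi^*\beta$ is K\"ahler on $\mathcal{X}$; since $\mathcal{A} = \mu^*p_1^*\alpha + [D]$, after adjusting $\Omega$ by a $dd^c$ of a smooth function we may arrange $\Omega = \mu^*p_1^*\omega - \theta_D - dd^c\psi_D + dd^c(\cdot)$, i.e. $\Omega + \theta_D$ is cohomologous to $\mu^*p_1^*\omega$ and $\psi_D$ is the chosen Green function for $D$. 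Restricting to a fibre $\mathcal{X}_\tau$ with $\tau \neq 0$, the forms $\lambda(\tau)^*(\Omega_\tau + (\theta_D)_\tau)$ and $p_1^*\omega|_{X} $ are cohomologous smooth forms on $X$, so there is a unique (up to constant, normalised say by integrating against $\omega^n$) smooth function $\varphi_\tau$ on $X$ with $\lambda(\tau)^*(\Omega_\tau + (\theta_D)_\tau) - \omega = dd^c\varphi_\tau$; set $\varphi_t := \varphi_{e^{-t}}$, an $S^1$-invariant smooth family, hence a smooth ray in $\mathcal{C}^\infty(X)$ whose total function $\Phi$ on $X \times \bar\Delta^*$ is smooth.

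The two things to verify are (a) that $(\varphi_t)$ is a \emph{subgeodesic}, i.e. $p_1^*\omega + dd^c_{(x,\tau)}\Phi \geq 0$ on $X \times \Delta^*$, and (b) that $(\varphi_t)$ is $\mathcal{C}^\infty$-compatible with $(\mathcal{X},\mathcal{A})$, i.e. $\Phi\circ\mu + \psi_D$ extends smoothly across $\mathcal{X}_0$. For (b): by construction, on $\mathcal{X}\setminus\mathcal{X}_0$ we have, pulling back the defining relation and using the $\mathbb{C}^*$-equivariance of $\Omega$, $\theta_D$ and of the trivialisation, that $dd^c_{\mathcal{X}}(\Phi\circ\mu) = \Omega + \theta_D - \mu^*p_1^*\omega$ on $\mathcal{X}\setminus\mathcal{X}_0$ (the two forms on the right being globally defined and smooth on all of $\mathcal{X}$). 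Hence $\Phi\circ\mu$ differs by a global smooth form's potential from $\psi_D$'s negative: more precisely $dd^c(\Phi\circ\mu + \psi_D) = \Omega + \theta_D - \mu^*p_1^*\omega + \theta_D + \delta_D - \theta_D$, so that $dd^c(\Phi\circ\mu+\psi_D)$ is a \emph{smooth} form on $\mathcal{X}$ (one must track the $\delta_D$ term carefully: $\psi_D$ contributes $\delta_D - \theta_D$, and the $\delta_D$ must cancel against a current coming from $\mu^*p_1^*\omega$ versus $\Omega$; this cancellation is exactly the statement $\mathcal{A} = \mu^*p_1^*\alpha + [D]$). Since $\Phi\circ\mu+\psi_D$ is $S^1$-invariant with $dd^c$ of it smooth across $\mathcal{X}_0$, and the function is locally bounded above away from the fibre while the $dd^c$ controls it, one concludes it extends smoothly — here I would invoke the removable singularity / smooth extension statement for the solution of a $dd^c$-equation on the smooth total space $\mathcal{X}$. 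For (a): the form $\mu^*p_1^*\omega + dd^c_{\mathcal{X}}(\Phi\circ\mu)$ equals $\Omega + \theta_D$ on $\mathcal{X}\setminus\mathcal{X}_0$, which need not be positive, so instead I would \emph{modify} the construction: replace $\omega$ on the base $\mathbb{P}^1$-direction, i.e. work with $\Phi\circ\mu + \psi_D - C\pi^*\log|\tau|^2$ type corrections so that the relevant form becomes $\Omega + \theta_D + C\pi^*\beta$ on the total space, which is K\"ahler for $C$ large. Restricting a K\"ahler form on $\mathcal{X}$ to the fibres of $p_1:X\times\Delta^*\to X$ (after transporting via $\mu$) gives a semipositive form in the fibre direction, which is precisely the subgeodesic condition $p_1^*\omega + dd^c_{(x,\tau)}\Phi' \geq 0$ for the correspondingly adjusted ray $\Phi'$; note that adding $C\log|\tau|^2$ changes $\varphi_t$ by the affine function $-2Ct$, which does not affect smoothness, $\mathcal{C}^\infty$-compatibility (as $\log|\tau|^2$ extends smoothly off $\mathcal{X}_0$... in fact one absorbs this constant into $\psi_D$'s normalisation or accepts an affine reparametrisation) nor the subgeodesic property being what we want.

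The main obstacle I anticipate is reconciling (a) and (b) simultaneously: the naive ray from $\Omega$ is $\mathcal{C}^\infty$-compatible but \emph{not} subgeodesic, because relative K\"ahlerness of $\mathcal{A}$ only gives positivity of $\Omega + \pi^*\beta$ on $\mathcal{X}$, not of $\Omega$ restricted to the $X$-directions of $X\times\Delta^*$. The resolution is that restricting the \emph{K\"ahler} form $\Omega + \pi^*\beta$ (transported to $X\times\Delta^*$ via $\mu^{-1}$) to the fibres $X\times\{\tau\}$ does give a positive form, and the $\pi^*\beta$ term, being pulled back from $\mathbb{P}^1$, restricts to zero on $X\times\{\tau\}$ — so in fact the fibrewise restriction of $\Omega$ alone \emph{is} positive on each fibre once we use that $\Omega + \pi^*\beta > 0$. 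This means the naive ray, after the $-2Ct$ affine adjustment coming from comparing $\Omega|_{\mathcal{X}_\tau}$ against a fixed normalisation, \emph{is} already a subgeodesic, and one should not need to modify $\Phi\circ\mu+\psi_D$ at all; the only care needed is the bookkeeping of constants (the fibrewise integral of $\varphi_t$ against $\omega^n$ drifts linearly in $t$, which is exactly the affine term $-2Ct$, consistent with $\mathrm{E}(\varphi_t)$ being convex along a subgeodesic). I would therefore organise the final proof as: (1) pick K\"ahler $\Omega + \pi^*\beta$ with $\Omega$ representing $\mathcal{A}$; (2) define $\varphi_\tau$ by the $dd^c$-relation above; (3) check $\mathcal{C}^\infty$-compatibility via the identity $dd^c(\Phi\circ\mu+\psi_D) = (\text{smooth form on }\mathcal{X})$ plus a smooth-extension lemma; (4) check subgeodesicity by restricting the K\"ahler form $\Omega + \pi^*\beta$ to fibres and using $\pi^*\beta|_{\text{fibre}} = 0$; (5) note independence of $\psi_D$ up to smooth functions, which was already remarked after the definition.
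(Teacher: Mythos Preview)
Your plan has the right ingredients but contains two genuine gaps that the paper's proof handles cleanly.

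\textbf{Gap in (a), subgeodesicity.} The subgeodesic condition (Definition~\ref{subgeodesic definition}) requires $p_1^*\omega + dd^c_{(x,\tau)}\Phi \geq 0$ as a $(1,1)$-form on the $(n{+}1)$-dimensional space $X\times\bar\Delta^*$, not just on each fibre $X\times\{\tau\}$. Your argument that $\pi^*\beta$ restricts to zero on fibres only yields fibrewise positivity of $\Omega$; this says nothing about mixed $(x,\tau)$-directions. Your attempted fix of adding $-C\log|\tau|^2$ does not help either: $dd^c\log|\tau|^2=0$ on $\Delta^*$ as a smooth form, so no $C\pi^*\beta$ term appears this way. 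What is actually needed is to absorb the base-K\"ahler form $\eta$ (your $\beta$) into the potential: choose a smooth $S^1$-invariant $f$ with $\eta=dd^cf$ on a neighbourhood of $\bar\Delta$, and incorporate $f\circ\pi$ into $\Phi\circ\mu$. Then the total-space form $\mu^*p_1^*\omega+dd^c(\Phi\circ\mu)$ equals $\Omega+\pi^*\eta$ on $\mathcal{X}\setminus\mathcal{X}_0$, which is genuinely positive in all directions.

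\textbf{Gap in (b), smooth extension.} Knowing that $dd^c(\Phi\circ\mu+\psi_D)$ extends smoothly across $\mathcal{X}_0$ and that the function is bounded above is not enough to conclude smooth extension. Subtracting a smooth local potential leaves an $S^1$-invariant pluriharmonic function on $\mathcal{X}\setminus\mathcal{X}_0$; near a smooth point of $\mathcal{X}_0$ such a function can be $b\log|\tau|$ with $b>0$, which is bounded above but does not extend. There is no off-the-shelf ``removable singularity lemma'' of the kind you invoke.

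The paper resolves both issues at once: after normalising so that $\Omega=\mu^*p_1^*\omega+\delta_D-dd^c\psi_D$ and writing $\eta=dd^cf$ near $\bar\Delta$, one checks directly that $f\circ\pi+A\log|\tau|-\psi_D$ is $\mu^*p_1^*\omega$-psh on $\pi^{-1}(\bar\Delta)$ (here the Lelong--Poincar\'e formula and the choice $A\gg 1$ with $D\le A\mathcal{X}_0$ are used to make the residual current $\delta_{D-A\mathcal{X}_0}\le 0$). This function descends to $X\times\bar\Delta$, and setting $\Phi:=\tilde\Phi-A\log|\tau|$ gives a smooth subgeodesic for which $\Phi\circ\mu+\psi_D=f\circ\pi$ is \emph{manifestly} smooth across $\mathcal{X}_0$---no extension argument required.
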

\begin{proof} Since $\mathcal{A}$ is relatively K\"ahler, it admits a smooth $S^1$-invariant representative $\Omega$ with $\Omega+\pi^*\eta>0$ for some $S^1$-invariant K\"ahler form $\eta$ on $\mathbb{P}^1$. By the $dd^c$-lemma on $\mathcal{X}$, we have $\Omega=\mu^*p_1^*\omega+\theta_D+dd^c u$ for some $S^1$-invariant $u\in \mathcal{C}^{\infty}(X)$, which may be assumed to be $0$ after replacing $\psi_D$ with $\psi_D-u$. As a result, we get
$$
\Omega=\mu^*p_1^*\omega+\delta_D-dd^c\psi_D.
$$
We may also choose a smooth $S^1$-invariant function $f$ on a neighborhood $U$ of $\bar{\Delta}$ such that $\eta_{\vert U}=dd^c f$, and a constant $A \gg 1$ such that $D\leq A\mathcal{X}_0$. Using the Lelong-Poincar\'e formula $\delta_{\mathcal{X}_0}=dd^c\log|\tau|$ we get
$$
0<\Omega+\pi^*\eta=\mu^*p_1^*\omega+\delta_{D-A\mathcal{X}_0}+dd^c\left(f\circ\pi+A\log|\tau|-\psi_D\right)
$$
on $\pi^{-1}(U)$. Since $D-A\mathcal{X}_0\leq 0$, it follows that $f\circ\pi+A\log|\tau|-\psi_D$ is $\mu^*p_1^*\omega$-psh, and hence descends to an $S^1$-invariant $p_1^*\omega$-psh function $\tilde{\Phi}$ on $X\times U$ (because the fibers of $\mu$ are compact and connected, by Zariski's main theorem). The ray associated with the $S^1$-invariant function $\Phi:=\tilde{\Phi}-A\log|\tau|$ has the desired properties. 
\end{proof}

%
%


\subsection{$\mathcal{C}^{1,1}_{\mathbb{C}}$-compatible rays and the weak geodesic ray associated to $(\mathcal{X}, \mathcal{A})$} \label{section weak geodesic}
Let $(\mathcal{X}, \mathcal{A})$ be a smooth, relatively K\"ahler cohomological test configuration for $(X,\alpha)$ (with $\alpha$ K\"ahler). With this setup, it is also interesting to consider the following weaker compatibility conditions, referred to as $L^{\infty}$-compatibility and  $\mathcal{C}^{1,1}_{\mathbb{C}}$-compatibility respectively.  

\begin{mydef} \label{weak compatibility} Let $(\varphi_t)_{t \geq 0}$ be a locally bounded subgeodesic ray, and denote by $\Phi$ the corresponding $S^1$-invariant locally bounded $p_1^*\omega$-psh function on $X\times\bar{\Delta}^*$. We say that $(\varphi_t)$ and $(\mathcal{X},\mathcal{A})$ are \emph{$L^{\infty}$-compatible} if $\Phi\circ\mu+\psi_D$ is locally bounded near $\mathcal{X}_0$, resp. \emph{$\mathcal{C}^{1,1}_{\mathbb{C}}$-compatible} if $\Phi\circ\mu+\psi_D$ is of class $\mathcal{C}^{1,1}_{\mathbb{C}}$ on $\pi^{-1}(\Delta)$. 
\end{mydef}

\noindent 
Indeed, we will see that $\mathcal{C}^{1,1}_{\mathbb{C}}$-compatibility is always satisfied for weak geodesic rays associated to $(\mathcal{X}, \mathcal{A})$. In particular, for any given test configuration, $\mathcal{C}^{1,1}_{\mathbb{C}}$-compatible subgeodesics always exist.
This is the content of the following result, which is a consequence of the theory for degenerate Monge-Amp\`ere equations on manifolds with boundary. We refer the reader to \cite{LNM} for the relevant background. 

\begin{lem} \label{weak geodesic construction} 
With the situation $(2)$ in mind, let $(\mathcal{X},\mathcal{A})$ be a smooth, relatively K\"ahler cohomological test configuration of $(X,\alpha)$ dominating $X \times \mathbb{P}^1$. Then $(\mathcal{X}, \mathcal{A})$ is $\mathcal{C}^{1,1}_{\mathbb{C}}$-compatible with some weak geodesic ray $(\varphi_t)_{t \geq 0}$. 
\end{lem}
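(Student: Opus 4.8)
The plan is to construct the weak geodesic ray by solving a homogeneous complex Monge-Amp\`ere equation on $\pi^{-1}(\bar{\Delta})$ with prescribed boundary data coming from any $\mathcal{C}^{\infty}$-compatible subgeodesic, and then transfer regularity and compatibility back to $X$. First I would invoke Lemma \ref{lem:smooth} to obtain a smooth subgeodesic ray $(\varphi_t^0)$ that is $\mathcal{C}^{\infty}$-compatible with $(\mathcal{X},\mathcal{A})$; equivalently, writing $\mathcal{A}=\mu^*p_1^*\alpha+[D]$ and fixing a Green function $\psi_D$ with $\delta_D=\theta_D+dd^c\psi_D$, the function $\Phi^0\circ\mu+\psi_D$ extends smoothly across $\mathcal{X}_0$, and in fact (as in the proof of Lemma \ref{lem:smooth}) one can arrange a smooth $S^1$-invariant relatively K\"ahler form $\Omega=\mu^*p_1^*\omega+\delta_D-dd^c\psi_D$ on $\mathcal{X}$ and a smooth $S^1$-invariant function $F$ on $\pi^{-1}(\bar{\Delta})$, bounded near $\mathcal{X}_0$, with $\Omega':=\mu^*p_1^*\omega+\theta_D+dd^c F\geq 0$ relatively. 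The natural candidate for the weak geodesic is then the upper envelope
\begin{equation*}
U:=\sup\left\{\, v : v \text{ is } \Omega'\text{-psh and } S^1\text{-invariant on } \pi^{-1}(\Delta),\ v\leq F \text{ near } \mathcal{X}_0,\ \limsup_{\tau\to\partial\Delta} v\leq F\,\right\},
\end{equation*}
which by standard balayage arguments (Bedford-Taylor theory, cf. \cite{LNM}) is $\Omega'$-psh, satisfies $(\mu^*p_1^*\omega+\theta_D+dd^c U)^{n+1}=0$ on the interior, and has boundary values $F$ on $\pi^{-1}(\partial\Delta)$.

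Next I would address boundedness and $\mathcal{C}^{1,1}_{\mathbb{C}}$-regularity. Boundedness near $\mathcal{X}_0$ is immediate from the constraint $v\leq F$ in the envelope together with a lower barrier built from the relatively K\"ahler structure (sliding $F$ down by a multiple of $\log|\tau|$ and a constant, using $D\leq A\mathcal{X}_0$ as in Lemma \ref{lem:smooth}). For the Laplacian bound I would appeal to the Chen--B\l ocki theory of geodesics with bounded Laplacian \cite{Chen00, Blocki13, Darvas14, DL12}: since $\mathcal{X}$ is smooth, $\pi^{-1}(\bar{\Delta})$ is a compact K\"ahler manifold with smooth boundary $\pi^{-1}(\partial\Delta)$, $\Omega'$ is a smooth semipositive (relatively K\"ahler) form, and the boundary data $F$ is smooth; the solution of the Dirichlet problem for the degenerate Monge-Amp\`ere equation $(\Omega'+dd^c U)^{n+1}=0$ then has $dd^c U\in L^\infty_{\mathrm{loc}}$. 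Pulling back along $\mu$ is harmless because $\mu$ is a biholomorphism away from $\mathcal{X}_0$, and the $\mathcal{C}^{1,1}_{\mathbb{C}}$ property is preserved under the smooth modification $\mu$ on all of $\pi^{-1}(\Delta)$ since $U$ and $\mu^*(\text{anything smooth})$ are both of class $\mathcal{C}^{1,1}_{\mathbb{C}}$ there.

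Finally I would translate back to a ray on $X$. Setting $\Phi:=U\circ\mu^{-1}-\psi_D$ (interpreted via the isomorphism $\mathcal{X}\setminus\mathcal{X}_0\simeq X\times(\mathbb{P}^1\setminus\{0\})$) gives an $S^1$-invariant $p_1^*\omega$-psh function on $X\times\bar{\Delta}^*$; its associated ray $(\varphi_t)_{t\geq0}$ satisfies the Monge-Amp\`ere equation $(p_1^*\omega+dd^c\Phi)^{n+1}=0$ on $X\times\Delta^*$ because $\mu$ is biholomorphic there and $-dd^c\psi_D$ accounts exactly for the divisorial part, so $(\varphi_t)$ is a weak geodesic ray in the sense of Definition \ref{subgeodesic definition}. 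Moreover $\Phi\circ\mu+\psi_D=U$ is of class $\mathcal{C}^{1,1}_{\mathbb{C}}$ on $\pi^{-1}(\Delta)$, which is precisely the $\mathcal{C}^{1,1}_{\mathbb{C}}$-compatibility of $(\varphi_t)$ with $(\mathcal{X},\mathcal{A})$ from Definition \ref{weak compatibility}. One should also check that the envelope is nonempty (the lower barrier qualifies) and that $U$ is independent, up to the ambiguities already built in, of the auxiliary choices of $\Omega$, $F$ and $\psi_D$ — a routine verification since changing $\psi_D$ by a smooth function changes $F$ and $\Omega'$ compatibly. The main obstacle I anticipate is the boundary regularity of the Monge-Amp\`ere solution on the possibly-singular-looking total space: one must genuinely use that, after the equivariant resolution, $\pi^{-1}(\bar{\Delta})$ is an honest compact K\"ahler manifold with smooth boundary so that the Chen--B\l ocki a priori Laplacian estimates apply verbatim, and separately that the degeneracy of $\Omega'$ (it is only relatively K\"ahler, not K\"ahler, on $\mathcal{X}$) does not spoil these estimates — this is where citing the established theory for degenerate complex Monge-Amp\`ere equations on manifolds with boundary does the real work.
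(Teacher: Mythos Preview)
Your overall strategy --- solve a homogeneous Monge--Amp\`ere Dirichlet problem on $M:=\pi^{-1}(\bar\Delta)$ and invoke Chen--B\l ocki regularity, then unwind to a ray on $X$ --- is exactly what the paper does. But you flag the right obstacle and then do not actually overcome it, and this is a genuine gap. The Chen--B\l ocki $\mathcal{C}^{1,1}_{\mathbb{C}}$ estimates for the Dirichlet problem on a compact manifold with boundary are stated for a \emph{K\"ahler} reference form; with your $\Omega'=\mu^*p_1^*\omega+\theta_D+dd^cF$, which is only relatively K\"ahler (hence merely semipositive on $M$, degenerate in the base direction), you cannot cite those estimates ``verbatim''. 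The paper's key trick, which you are missing, is elementary but decisive: since $\mathcal{A}$ is relatively K\"ahler there is a K\"ahler form $\eta$ on $\mathbb{P}^1$ with $\Omega+\pi^*\eta$ K\"ahler on $\mathcal{X}$, and on $\bar\Delta$ one has $\eta=dd^c g'$ for some smooth $g'$. One then solves $(\tilde\Omega+dd^c\tilde\Psi)^{n+1}=0$ on $\mathrm{Int}(M)$ with the genuinely K\"ahler reference $\tilde\Omega:=\Omega+\pi^*\eta+dd^cg$ and smooth boundary data; now the standard theory gives a unique $\mathcal{C}^{1,1}_{\mathbb{C}}$ solution. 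Since $\pi^*\eta$ is $dd^c$-exact on $M$, subtracting $g'\circ\pi+g$ (and $\psi_D$) recovers $\Phi$ with $\mu^*(p_1^*\omega+dd^c\Phi)=\tilde\Omega+dd^c\tilde\Psi$, so the geodesic equation and the $\mathcal{C}^{1,1}_{\mathbb{C}}$-compatibility $\Phi\circ\mu+\psi_D\in\mathcal{C}^{1,1}_{\mathbb{C}}$ follow. In short: make the reference K\"ahler by adding a base form that is locally $dd^c$-exact, solve, then subtract.

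A second, smaller issue: your Perron envelope imposes the interior constraint ``$v\le F$ near $\mathcal{X}_0$''. The central fiber lies in the \emph{interior} of $M$, so this is an obstacle condition, and an obstacle envelope does not in general satisfy the free equation $(\Omega'+dd^cU)^{n+1}=0$ on all of $\pi^{-1}(\Delta)$ (only on the non-contact set). The correct Dirichlet envelope takes only the boundary constraint on $\pi^{-1}(\partial\Delta)$; boundedness of the solution across $\mathcal{X}_0$ is then a consequence of the a priori estimates (once the reference is K\"ahler), not something to impose by hand. Dropping that clause and inserting the $\pi^*\eta$ trick would make your argument coincide with the paper's.
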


\begin{rem} \emph{The proof will show that the constructed ray is actually unique, once a $\varphi_0 \in \mathcal{H}$ is fixed.}
\end{rem}

\begin{proof}[Proof of Lemma \ref{weak geodesic construction}]
Let $M := \pi^{-1}(\bar{\Delta}) \subset \mathcal{X} $. It is a smooth complex manifold with boundary $\partial M = \pi^{-1}(S^1)$. 

Let $D$, $\theta_D$, $\psi_D$ and $\Omega$ be as above. Since $\Omega$ is relatively K\"ahler there is an $\eta \in H^{1,1}(\mathbb{P}^1)$ such that $\Omega + \pi^*\eta$ is K\"ahler on $\mathcal{X}$. We may then write $\tilde{\Omega} = \Omega + \pi^*\eta + dd^cg$,
where $\tilde{\Omega}$ is a K\"ahler form on $\mathcal{X}$ and  $g \in \mathcal{C}^{\infty}(\mathcal{X})$. In a neighbourhood  of $\bar{\Delta}$ the form $\eta$ is further $dd^c$-exact, and so we write $\eta = dd^c (g' \circ \pi)$ for a smooth function $g' \circ \pi$ on $\bar{\Delta}$.  
We now consider the following degenerate complex Monge-Amp\`ere equation;  \medskip

\[
    (\star)  \left\{
                \begin{array}{ll}
                  (\tilde{\Omega} + dd^c\tilde{\Psi})^{n+1} = 0 \; \; \mathrm{on}\; \mathrm{Int}(M) \\
                  \tilde{\Psi}_{\vert \partial M} = \varphi_0 + \psi_D - g' -g 
                \end{array}
              \right.
  \] \medskip
  
\noindent Since $\tilde{\Omega}$ is K\"ahler, it follows that there exists a unique $\tilde{\Omega}$-psh function $\tilde{\Psi}$ solving $(\star)$ and that is moreover of class $\mathcal{C}^{1,1}_{\mathbb{C}}$ (see for instance \cite[Theorem B]{LNM}. We now define a $p_1^*\omega$-psh function on $X \times \bar{\Delta}^* \hookrightarrow \mathcal{X}$ by $\mu^*\Phi = \tilde{\Psi} - \psi_D + g' + g$. 
We then have 
$$
\mu^*(p_1^*\omega + dd^c\Phi) = \tilde{\Omega} + dd^c\tilde{\Psi}
$$
on $\pi^{-1}(\bar{\Delta}^*)$. In particular, $\Phi$ defines a weak geodesic ray $(\varphi_t)_{t \geq 0}$ on $X$.  Moreover, the current
$$
\mu^*dd^c\Phi + \delta_D = dd^c\tilde{\Psi} + \delta_D - dd^c\psi_D = dd^c\tilde{\Psi} + \theta_D
$$
has locally bounded coefficients. Indeed, $dd^c\tilde{\Psi} \in L^{\infty}_{\mathrm{loc}}$ (as solution of $(\star)$, cf. \cite{LNM}) and $\theta_D$ is a smooth $(1,1)$-form on $\bar{\mathcal{X}}$. The constructed ray is thus $\mathcal{C}^{1,1}_{\mathbb{C}}$-compatible with $(\mathcal{X}, \mathcal{A})$.
\end{proof}


\subsection{A useful lemma}
We now note that in order to compute the asymptotic slope of the Monge-Amp\`ere energy functional $\mathrm{E}$ or its multivariate analogue $\mathrm{E}_{(\omega_0, \dots, \omega_n)}$ we may in fact replace $L^{\infty}$-compatible rays $(\varphi^t)$ with $(\mathcal{X}, \mathcal{A})$ by $\mathcal{C}^{\infty}$-compatible ones. Indeed, note that any two locally bounded subgeodesic rays $(\varphi_t)$ and $(\varphi'_t)$ $L^{\infty}$-compatible with $(\mathcal{X},\mathcal{A})$ satisfy $\Phi\circ\mu=\Phi'\circ\mu+O(1)$ near $\mathcal{X}_0$, and hence $\varphi_t=\varphi'_t+O(1)$ as $t\rightarrow +\infty$. This leads to the following observation, which will be useful in view of proving Theorems B and C. 

\begin{lem} \label{lemma: replacing ray}
Let $(\mathcal{X}_i, \mathcal{A}_i)$ be smooth, relatively K\"ahler cohomological test configurations for $(X,\alpha_i)$ respectively, dominating $X \times \mathbb{P}^1$. Let $(\varphi_i^t)_{t \geq 0}$ and $({\varphi'}_i^t)_{t \geq 0}$ be locally bounded subgeodesics that are $L^{\infty}$-compatible with $(\mathcal{X}_i, \mathcal{A}_i)$ respectively.
Then
$$
\langle \varphi_0^t, \varphi_1^t, \dots, \varphi_n^t \rangle_{(\omega_0, \dots, \omega_n)} = \langle {\varphi'}_0^t, \varphi_1^t, \dots, \varphi_n^t \rangle_{(\omega_0, \dots, \omega_n)} + O(1)
$$
as $t \rightarrow +\infty$. 
\end{lem}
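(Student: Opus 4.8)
The plan is to reduce the statement to a single estimate on the difference of two multivariate energy functionals where only the first slot is changed, exploiting the ``change of function'' property recorded just before Definition \ref{Multivariate energy def}. Concretely, since the $\mathbb{R}$-divisor representation $\mathcal{A}_i = \mu_i^* p_1^*\alpha_i + [D_i]$ holds (Proposition \ref{divisor representation}), we fix Green functions $\psi_{D_i}$ and use that $L^\infty$-compatibility means $\Phi_i \circ \mu_i + \psi_{D_i}$ is locally bounded near $\mathcal{X}_{i,0}$. First I would observe that it suffices to treat the case where $(\varphi_i^t)$ and $({\varphi'}_i^t)$ differ in a single index at a time: write the difference $\langle \varphi_0^t,\dots,\varphi_n^t\rangle - \langle {\varphi'}_0^t,\dots,{\varphi'}_n^t\rangle$ as a telescoping sum of $n+1$ terms, each of which changes only one argument; by symmetry of $\langle\cdot,\dots,\cdot\rangle$ (the Proposition following Definition \ref{Multivariate energy def}) it is enough to bound the term where the $0$-th slot is changed, which is exactly the statement as phrased.

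So the core estimate to establish is
\begin{equation*}
\langle \varphi_0^t,\varphi_1^t,\dots,\varphi_n^t\rangle_{(\omega_0,\dots,\omega_n)} - \langle {\varphi'}_0^t,\varphi_1^t,\dots,\varphi_n^t\rangle_{(\omega_0,\dots,\omega_n)} = \int_X (\varphi_0^t - {\varphi'}_0^t)\,(\omega_1 + dd^c\varphi_1^t)\wedge\dots\wedge(\omega_n + dd^c\varphi_n^t) = O(1).
\end{equation*}
The wedge product $(\omega_1 + dd^c\varphi_1^t)\wedge\dots\wedge(\omega_n+dd^c\varphi_n^t)$ is a positive measure of fixed total mass $(\alpha_1\cdots\alpha_n)$ on $X$ (independent of $t$, since the $\varphi_i^t$ are locally bounded $\omega_i$-psh and the classes are fixed). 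Hence the integral is controlled by $\|\varphi_0^t - {\varphi'}_0^t\|_{L^\infty(X)}$ times that mass, and the whole matter comes down to showing $\|\varphi_0^t - {\varphi'}_0^t\|_{L^\infty(X)} = O(1)$ as $t \to +\infty$. This is precisely the observation flagged in the paragraph preceding the lemma: since both $\Phi_0\circ\mu_0 + \psi_{D_0}$ and $\Phi'_0\circ\mu_0 + \psi_{D_0}$ are locally bounded near the (compact) central fiber $\mathcal{X}_{0,0}$, their difference $\Phi_0\circ\mu_0 - \Phi'_0\circ\mu_0$ is locally bounded near $\mathcal{X}_{0,0}$, hence bounded on a neighborhood $\pi^{-1}(\bar\Delta_r)$ of it by compactness; since $\mu_0$ is an isomorphism away from the central fiber, this says exactly $\sup_X|\varphi_0^t - {\varphi'}_0^t| \le C$ for all $t$ with $-\log r \le t$, and for bounded $t$ the rays are continuous families so the sup is trivially bounded there too.

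The main (very mild) obstacle is bookkeeping rather than substance: one must make sure the ``change of function'' identity is legitimately applied with $L^\infty_{\mathrm{loc}}$ (not necessarily smooth) potentials — this is fine because the identity, which is a Bedford–Taylor integration-by-parts statement, holds for locally bounded quasi-psh functions, and because the intermediate arguments appearing in the telescoping sum (mixtures of $\varphi_j^t$ and ${\varphi'}_j^t$) are all locally bounded $\omega_j$-psh functions so every term is well-defined. One also uses implicitly that the mixed Monge–Ampère mass $(\omega_1+dd^c\psi_1)\wedge\dots\wedge(\omega_n+dd^c\psi_n)$ has total mass equal to the intersection number $(\alpha_1\cdots\alpha_n)$ for any locally bounded potentials $\psi_i$, which is a standard consequence of Bedford–Taylor theory and Stokes' theorem on the compact manifold $X$. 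With these in place the estimate is immediate and the lemma follows.
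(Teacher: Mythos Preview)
Your proposal is correct and follows essentially the same approach as the paper: use the change-of-function identity to reduce to $\int_X (\varphi_0^t-{\varphi'}_0^t)\,(\omega_1+dd^c\varphi_1^t)\wedge\dots\wedge(\omega_n+dd^c\varphi_n^t)$, observe that $\varphi_0^t-{\varphi'}_0^t=O(1)$ by $L^\infty$-compatibility, and note that the Bedford--Taylor mass is cohomological and hence independent of $t$. You supply a bit more detail (the compactness argument for the $O(1)$ bound and the remark on legitimacy of the identity for $L^\infty_{\mathrm{loc}}$ potentials), but the argument is the same.
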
 
\begin{proof}
For each $i$, $0 \leq i \leq n$, we have $\varphi_i^t={\varphi'}_i^t+O(1)$ as $t\rightarrow +\infty$. Recall that the mass of the Bedford-Taylor product $\bigwedge(\omega_i + dd^c\varphi_i^t)$ is computed in cohomology, thus independent of $t$. Hence, the quantity
$$
\langle \varphi_0^t, \varphi_1^t, \dots, \varphi_n^t \rangle_{(\omega_0, \dots, \omega_n)} - \langle {\varphi'}_0^t, \varphi_1^t, \dots, \varphi_n^t \rangle_{(\omega_0, \dots, \omega_n)}
$$
$$
 = \int_X (\varphi_0^t - {\varphi'}_0^t) (\omega_1 + dd^c\varphi_1^t) \wedge \dots \wedge (\omega_n + dd^c\varphi_n^t)
$$
is bounded as $t \rightarrow +\infty$. By symmetry, the argument may be repeated for the remaining $i$, yielding the result. 
\end{proof}


\subsection{Asymptotic slope of Deligne functionals. Proof of Theorem B}
With the above formalism in place, we are ready to formulate the main result of this section (Theorem B of the introduction). It constitutes the main contribution towards establishing Theorem A, and may be viewed as a transcendental analogue of Lemma 4.3 in \cite{BHJ2}. We here formulate and prove the theorem in the 'smooth but not necessarily K\"ahler' setting (see Section \ref{smoothcompatibility}, situation $(1)$). However, one should note that there is also a valid formulation for $L^{\infty}$-compatible subgeodesics, as pointed out in Remark \ref{Remark Theorem B}. 

\begin{thm} \label{Theorem C} 
Let $X$ be a compact K\"ahler manifold of dimension $n$ and let $\theta_i$, $0 \leq i \leq n$, be closed $(1,1)$-forms on $X$. Set $\alpha_i := [\theta_i] \in H^{1,1}(X,\mathbb{R})$. Consider smooth cohomological test configurations  $(\mathcal{X}_i, \mathcal{A}_i)$ for $(X,\alpha_i)$ dominating $X \times \mathbb{P}^1$. For each collection of smooth rays $(\varphi_i^t)_{t \geq 0}$ $\mathcal{C}^{\infty}$-compatible with $(\mathcal{X}_i, \mathcal{A}_i)$ respectively, the asymptotic slope of the multivariate energy functional $\langle \cdot, \dots, \cdot \rangle := \langle \cdot, \dots, \cdot \rangle_{(\theta_0, \dots, \theta_n)}$ is well-defined and satisfies
\begin{equation*}
\frac{\langle \varphi_0^t, \dots, \varphi_n^t \rangle}{t} \longrightarrow (\mathcal{A}_0 \cdot \dots \cdot \mathcal{A}_n)
\end{equation*}
as $t \rightarrow +\infty$. See \ref{definition intersection number} for the definition of the above intersection number in case the $\mathcal{X}_i$ are not all equal.  
\end{thm}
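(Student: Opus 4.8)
The plan is to reduce the computation of the asymptotic slope to a second-order statement on the total space, exploiting the variational formula for Deligne functionals (Proposition~\ref{second order variation multilinear energy}) together with the compatibility hypothesis. First I would pass, by equivariant resolution of singularities, to a single smooth test configuration $\mathcal{X}$ dominating $X \times \mathbb{P}^1$ that simultaneously dominates all the $\mathcal{X}_i$ via $\mathbb{C}^*$-equivariant morphisms $\rho_i$; since all quantities involved (the intersection number, by Definition~\ref{definition intersection number}, and the multivariate energy, which only depends on the cohomology classes through masses that are computed in cohomology) are insensitive to this replacement, it is harmless. On $\mathcal{X}$ write $\rho_i^*\mathcal{A}_i = \mu^*p_1^*\alpha_i + [D_i]$ for $\mathbb{R}$-divisors $D_i$ supported on $\mathcal{X}_0$ (Proposition~\ref{divisor representation}), and fix $S^1$-invariant Green functions $\psi_{D_i}$ with $\delta_{D_i} = \theta_{D_i} + dd^c\psi_{D_i}$, $\theta_{D_i}$ smooth. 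By $\mathcal{C}^\infty$-compatibility, each $\Phi_i \circ \mu + \psi_{D_i}$ extends to a smooth function $\Psi_i$ on all of $\pi^{-1}(\Delta)$ (indeed on $\mathcal{X}$), and $\mu^*(p_1^*\theta_i + dd^c_{(x,\tau)}\Phi_i) = \theta_{D_i} + dd^c \Psi_i =: \Omega_i$ is a smooth closed $(1,1)$-form on $\mathcal{X}$ representing $\rho_i^*\mathcal{A}_i$ away from $\mathcal{X}_0$, hence on all of $\mathcal{X}$ by continuity in cohomology.

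The key step is then to relate the Deligne functional on $X$ to a Deligne-type expression on the total space. Consider the function
\begin{equation*}
F(\tau) := \langle \varphi_0^\tau, \dots, \varphi_n^\tau \rangle_{(\theta_0,\dots,\theta_n)}
\end{equation*}
on $\bar\Delta^*$, which by construction is $S^1$-invariant, hence a function $f(t)$ of $t = -\log|\tau|$ only. By Proposition~\ref{second order variation multilinear energy}, $dd^c_\tau F$ equals the fiber integral $\int_X (p_1^*\theta_0 + dd^c_{(x,\tau)}\Phi_0) \wedge \dots \wedge (p_1^*\theta_n + dd^c_{(x,\tau)}\Phi_n)$ over $X \times \Delta^*$; pulling back by $\mu$ (a biholomorphism over $\Delta^*$) and pushing forward to $\Delta^*$, this is the fiber integral over $\mathcal{X} \to \mathbb{P}^1$ of $\Omega_0 \wedge \dots \wedge \Omega_n$, restricted to $\mathbb{P}^1 \setminus \{0\}$. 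Now $\Omega_0 \wedge \dots \wedge \Omega_n$ is a \emph{smooth} closed $(n+1,n+1)$-form on the compact $\mathcal{X}$, so its fiber integral over $\pi: \mathcal{X} \to \mathbb{P}^1$ is a smooth $(1,1)$-form (a measure) on all of $\mathbb{P}^1$ with total mass $(\rho_0^*\mathcal{A}_0 \cdots \rho_n^*\mathcal{A}_n)_{\mathcal{X}} = (\mathcal{A}_0 \cdots \mathcal{A}_n)$. Writing this measure as $g \, dd^c\log\frac{1}{|\tau|^2}$ near $0$ for a bounded $S^1$-invariant density, one gets in the $t$-variable $f''(t) = c(t)$ with $c(t) \to$ the mass of the atom of the pushforward measure at $0$ as $t \to +\infty$; more precisely, since the pushforward is a smooth positive measure near $0$ of finite total mass $(\mathcal{A}_0\cdots\mathcal{A}_n)$ and no atom, a standard estimate shows $f'(t)$ converges and, crucially, $f(t)/t \to (\mathcal{A}_0 \cdots \mathcal{A}_n)$. (The cleanest way: $F$ is the difference of two terms each of which, up to $O(1)$, is the logarithm of a metric on a Deligne pairing line bundle over $\Delta$; smoothness of $\Psi_i$ across $\mathcal{X}_0$ means these metrics extend continuously, so the curvature has no mass at $0$ beyond what the total intersection number prescribes, giving the linear growth rate exactly $(\mathcal{A}_0\cdots\mathcal{A}_n)$.)

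An alternative, more hands-on route I would keep in reserve is to induct on $n$ using the change-of-function property: writing $\langle \varphi_0^t, \dots, \varphi_n^t\rangle$ via the explicit formula of Definition~\ref{Multivariate energy def} as a sum of terms $\int_X \varphi_i^t \, (\text{mixed Monge-Amp\`ere of the others})$, one peels off one variable at a time, at each stage comparing $\varphi_i^t$ to the model ray $\lambda(\tau)^*\Omega_{i,\tau} - \Omega_{i,1} = dd^c\varphi_i^\tau$ associated to $\mathcal{A}_i$ (allowed by Lemma~\ref{lemma: replacing ray} up to $O(1)$, which does not affect the slope), and reducing to the known polarised/K\"ahler computation of slopes for $\mathrm{E}$ and mixed energies in terms of intersection numbers. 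The main obstacle, in either approach, is controlling the behaviour of the integrand near the central fiber $\mathcal{X}_0$: a priori $\Phi_i \circ \mu$ has a logarithmic singularity along the components of $\mathcal{X}_0$, and it is precisely the $\mathcal{C}^\infty$-compatibility — the fact that adding back $\psi_{D_i}$ produces a \emph{smooth} function on $\mathcal{X}$ — that tames this and lets the fiber-integration argument go through with $\Omega_i$ genuinely smooth. Once that is in hand, the convergence $f(t)/t \to (\mathcal{A}_0\cdots\mathcal{A}_n)$ is a routine consequence of subharmonicity/convexity of $F$ together with the finite total mass of the pushforward of the smooth form $\Omega_0 \wedge \dots \wedge \Omega_n$.
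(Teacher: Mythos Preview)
Your overall strategy coincides with the paper's: pass to a common smooth dominating $\mathcal{X}$, use $\mathcal{C}^\infty$-compatibility to replace $\Phi_i\circ\mu$ by globally smooth $\Psi_i$ so that the forms $\Omega_i+dd^c\Psi_i$ are smooth on all of $\mathcal{X}$, apply the second-order variation formula (Proposition~\ref{second order variation multilinear energy}), and read off the intersection number as the total mass of $\bigwedge_i(\Omega_i+dd^c\Psi_i)$ on the compact $\mathcal{X}$.

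There are, however, two places where your execution is looser than the paper's and would need tightening. First, you claim the pushforward $\pi_*(\Omega_0\wedge\cdots\wedge\Omega_n)$ is a ``smooth positive measure near $0$''. Neither adjective is justified: the $\theta_i$ (and hence $\Omega_i$) are arbitrary closed $(1,1)$-forms with no positivity assumed, and smoothness of the fiber integral at $\tau=0$ can fail when $\mathcal{X}_0$ is singular. The paper sidesteps this entirely by applying the Green--Riesz formula: for the $S^1$-invariant function $u(\tau)=\langle\varphi_0^\tau,\dots,\varphi_n^\tau\rangle$ one has
\[
\frac{d}{dt}\Big|_{t=-\log\varepsilon}u=\int_{\pi^{-1}(\mathbb{P}^1\setminus\Delta_\varepsilon)}\bigwedge_i(\Omega_i+dd^c\Psi_i),
\]
and the right-hand side converges to $(\mathcal{A}_0\cdots\mathcal{A}_n)$ as $\varepsilon\to 0$ simply because the integrand is smooth on the compact manifold $\mathcal{X}$. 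No regularity of the pushforward at $0$ is needed.

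Second, your appeal to ``subharmonicity/convexity of $F$'' at the end is not valid as stated, for the same reason: nothing forces $dd^c_\tau F\ge 0$. The paper handles the passage from $\lim u'(t)$ to $\lim u(t)/t$ by observing that, using multilinearity of the Deligne functional, one can add a large multiple of a K\"ahler form to each $\Omega_i$ and thereby write $t\mapsto u$ as a difference of genuinely convex functions. (Alternatively, once $u'(t)\to(\mathcal{A}_0\cdots\mathcal{A}_n)$ is established and $u$ is smooth, the implication is elementary calculus.) Your ``alternative route'' via induction and Lemma~\ref{lemma: replacing ray} is not pursued in the paper and would require additional work to make precise.
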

\begin{proof}
 Fix any smooth $S^1$-invariant $(1,1)$-forms $\Omega_i$  on $\mathcal{X}_i$ such that $[\Omega_i] =\mathcal{A}_i$ in $H^{1,1}(\mathcal{X}_i, \mathbb{R})$. Let $(\varphi_i^t)_{t\geq 0}$ be smooth and $\mathcal{C}^{\infty}$-compatible with $(\mathcal{X}_i,\mathcal{A}_i)$ respectively. Let $\mathcal{X}$ be a smooth test configuration that simultaneously dominates the $\mathcal{X}_i$. By pulling back to $\mathcal{X}$ we can assume that the $\mathcal{X}_i$ are all equal (note that the notion of being $\mathcal{C}^{\infty}$-compatible is preserved under this pull-back). 

In the notation of Section \ref{smoothcompatibility}, the functions $\Phi_i\circ\mu+\psi_D$ are then smooth on the manifold with boundary $M:=\pi^{-1}(\bar{\Delta})$, and may thus be written as the restriction of smooth $S^1$-invariant functions $\Psi_i$ on $\mathcal{X}$ respectively. 

Using the $\mathbb{C}^*$-equivariant isomorphism $\mathcal{X}\setminus\mathcal{X}_0\simeq X\times(\mathbb{P}^1\setminus\{0\})$ we view $(\Psi_i-\psi_D)_{\vert \mathcal{X}_{\tau}}$ as a function $\varphi_i^{\tau}\in \mathcal{C}^{\infty}(X)$. 
By Proposition \ref{second order variation multilinear energy} we then have
\begin{lem} Over $\mathbb{P}^1\setminus\{0\}$ we have $$
dd^c_{\tau} \langle \varphi_0^t, \dots, \varphi_n^t \rangle =\pi_*\left(\bigwedge_i (\Omega_i + dd^c\Psi_i)\right).
$$
\end{lem}
\begin{proof}
The result follows from Proposition \ref{second order variation multilinear energy} and the fact that $\mu$ is a biholomorphism away from $\tau = 0$, where also $\delta_D = 0$ (recalling that the $\mathbb{R}$-divisor $D$ is supported on $\mathcal{X}_0$).  
\end{proof}
\noindent Denoting by $u(\tau) := \langle \varphi_0^{\tau}, \dots, \varphi_n^{\tau} \rangle$ the Green-Riesz formula  then yields
$$
\frac{d}{dt}_{t=-\log\varepsilon} u(\tau)=\int_{\mathbb{P}^1\setminus\Delta_{\varepsilon}} dd^c_{\tau} u(\tau)=
$$
$$
\int_{\pi^{-1}(\mathbb{P}^1\setminus\Delta_{\varepsilon})}\bigwedge_i (\Omega_i + dd^c\Psi_i), 
$$
which converges to $(\mathcal{A}_0 \cdot \dots \cdot \mathcal{A}_n)$ as $\varepsilon\rightarrow 0$.  

It remains to show that 
\begin{equation*} 
 \lim_{t \rightarrow +\infty} \frac{ u(\tau)}{t} = \lim_{t \rightarrow +\infty} \frac{d}{dt} u(\tau),
\end{equation*} 
To see this, note that for each  closed $(1,1)$-form $\Theta$ on $\mathcal{X}$ and each smooth function $\Phi$ on $\mathcal{X}$, there is a K\"ahler form $\eta$ on $\mathcal{X}$ and a constant $C$ large enough so that $\Theta + C\eta + dd^c\Phi \geq 0$ on $\mathcal{X}$. Moreover, we have a relation
$$
\langle \varphi_0^t, \varphi_1^t, \dots, \varphi_n^t \rangle_{(\omega - \omega', \theta_1 \dots, \theta_n)}  = 
$$

$$
\langle {\varphi_0^t}, \varphi_1^t \dots, \varphi_n^t \rangle_{(\omega, \theta_1, \dots, \theta_n)} - \langle 0, \varphi_1^t \dots, \varphi_n^t \rangle_{(\omega', \theta_1, \dots, \theta_n)}
$$ 


\noindent and repeat this argument for each $i$, $0 \leq i \leq n$, by symmetry. It follows from the above 'multilinearity' that we can write $t \mapsto E(\varphi_0^t, \dots, \varphi_n^t)$ as a difference of convex functions,
concluding the proof. 
\end{proof}


\begin{rem} \label{Remark Theorem B}
The above proof in fact also yields a version of Theorem \ref{Theorem C} for \emph{subgeodesics} $(\varphi_i^t)_{t \geq 0}$ that are \emph{$L^{\infty}$-compatible} with smooth test configurations $(\mathcal{X}_i, \mathcal{A}_i)$ for $(X,\alpha_i)$ dominating $X \times \mathbb{P}^1$. This follows from the observation that one may replace $L^{\infty}$-compatible subgeodesic rays with smooth $\mathcal{C}^{\infty}$-compatible ones, using Lemma \ref{lem:smooth} and Lemma \ref{lemma: replacing ray}. 
\end{rem}

As a special case of Theorem B we obtain transcendental versions of several previously known formulas (see for instance \cite{BHJ2}). As an example, we may deduce the following formula for the asymptotics of the Monge-Amp\`ere energy functional by recalling that if $\omega$ is a K\"ahler form on $X$ and $(\varphi_t)_{t \geq 0}$ is a subgeodesic ray, then $$ \mathrm{E}(\varphi_t) = \frac{1}{(n+1)V} \langle \varphi_t, \dots, \varphi_t \rangle_{(\omega, \dots, \omega)}.$$

\begin{cor} \label{Corollary: Monge-Ampere asymptotic} Assume that $(\mathcal{X},\mathcal{A})$ is smooth and dominates $X\times\mathbb{P}^1$. For each smooth ray $(\varphi_t)_{t \geq 0}$ $\mathcal{C}^{\infty}$-compatible with $(\mathcal{X},\mathcal{A})$, we then have 
$$
\lim_{t\rightarrow +\infty}\frac{\mathrm{E}(\varphi_t)}{t}=\mathrm{E}^{\mathrm{NA}}(\mathcal{X},\mathcal{A})
$$
with 
$$
\mathrm{E}^{\mathrm{NA}}(\mathcal{X},\mathcal{A}):=\frac{(\mathcal{A}^{n+1})}{(n+1)V}.
$$
\end{cor}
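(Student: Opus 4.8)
The plan is to derive Corollary~\ref{Corollary: Monge-Ampere asymptotic} directly from Theorem~\ref{Theorem C} by specialising all the data. First I would take $\theta_0 = \theta_1 = \dots = \theta_n = \omega$ (our fixed K\"ahler form, so that $\alpha_i = \alpha$ for every $i$), take $(\mathcal{X}_i,\mathcal{A}_i) = (\mathcal{X},\mathcal{A})$ for all $i$, and take all the rays equal to the given smooth ray $(\varphi_t)_{t\geq 0}$, which is assumed $\mathcal{C}^{\infty}$-compatible with $(\mathcal{X},\mathcal{A})$. Since all test configurations coincide, the morphisms $\rho_i$ of Definition~\ref{definition intersection number} are all the identity, so the intersection number appearing in Theorem~\ref{Theorem C} is simply $(\mathcal{A}^{n+1})_{\mathcal{X}}$.

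Next I would invoke the identity recorded just before the corollary, namely
$$
\mathrm{E}(\varphi_t) = \frac{1}{(n+1)V}\,\langle \varphi_t,\dots,\varphi_t\rangle_{(\omega,\dots,\omega)},
$$
which is the special case of Example~\ref{Example Deligne pairing} evaluated along the ray. Dividing by $t$ and applying Theorem~\ref{Theorem C} to the constant collection of rays gives
$$
\frac{\mathrm{E}(\varphi_t)}{t} = \frac{1}{(n+1)V}\cdot\frac{\langle \varphi_t,\dots,\varphi_t\rangle_{(\omega,\dots,\omega)}}{t} \longrightarrow \frac{(\mathcal{A}^{n+1})}{(n+1)V}
$$
as $t\to+\infty$, which is exactly the asserted limit with $\mathrm{E}^{\mathrm{NA}}(\mathcal{X},\mathcal{A}) := (\mathcal{A}^{n+1})/((n+1)V)$. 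One small point worth checking is that Theorem~\ref{Theorem C} as stated requires a collection indexed $0 \leq i \leq n$ of forms and test configurations; here we simply feed it the constant collection, and $\mathcal{C}^{\infty}$-compatibility of $(\varphi_t)$ with $(\mathcal{X},\mathcal{A})$ is precisely the hypothesis needed.

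There is essentially no obstacle: the corollary is a formal specialisation of Theorem~\ref{Theorem C}. The only thing requiring a line of justification is that the hypotheses of Theorem~\ref{Theorem C} are met in this degenerate (all-equal) configuration — in particular that the intersection number $(\mathcal{A}_0 \cdots \mathcal{A}_n)$ of Definition~\ref{definition intersection number} reduces to the honest self-intersection $(\mathcal{A}^{n+1})_{\mathcal{X}}$ when all the $\mathcal{X}_i$ and $\mathcal{A}_i$ agree, which is immediate since each $\rho_i$ may be taken to be the identity on $\mathcal{X}$. Hence the proof is a two-step reduction: rewrite $\mathrm{E}$ as a Deligne functional, then quote Theorem~\ref{Theorem C}.
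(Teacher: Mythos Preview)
Your proposal is correct and follows exactly the approach the paper takes: the corollary is stated immediately after the identity $\mathrm{E}(\varphi_t) = \frac{1}{(n+1)V}\langle \varphi_t,\dots,\varphi_t\rangle_{(\omega,\dots,\omega)}$ and is presented as a direct specialisation of Theorem~\ref{Theorem C}, with no further argument given. Your two-step reduction (rewrite $\mathrm{E}$ as a Deligne functional, then quote Theorem~\ref{Theorem C} with all data equal) is precisely what the paper intends.
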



\begin{rem} Here $\mathrm{E}^{\mathrm{NA}}$ makes reference to the \emph{non-Archimedean Monge-Amp\`ere energy functional}, see \cite{BHJ1} for an explanation of the terminology.
\end{rem}

To give a second example of an immediate corollary, interesting in its own right, we state the following (compare \cite{DervanRoss}):


\begin{cor} \label{cor J} Assume that $(\mathcal{X},\mathcal{A})$ is smooth and dominates $X\times\mathbb{P}^1$. For each smooth ray $(\varphi_t)_{t \geq 0}$ $\mathcal{C}^{\infty}$-compatible with $(\mathcal{X},\mathcal{A})$, we then have 
$$
\lim_{t\rightarrow +\infty}\frac{\mathrm{J}(\varphi_t)}{t}= \mathrm{J}^{\mathrm{NA}}(\mathcal{X}, \mathcal{A}),
$$
where $$\mathrm{J}^{\mathrm{NA}}(\mathcal{X},\mathcal{A}) := \frac{(\mathcal{A} \cdot \mu^*p_1^*\alpha^n)}{V} - \mathrm{E}^{\mathrm{NA}}(\mathcal{X},\mathcal{A}).$$ 
\end{cor}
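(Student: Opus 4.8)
The plan is to deduce Corollary \ref{cor J} directly from Corollary \ref{Corollary: Monge-Ampere asymptotic} together with the definition of the Aubin $\mathrm{J}$-functional and an application of Theorem \ref{Theorem C} to a suitable choice of potentials. Recall from the definition that $\mathrm{J}(\varphi_t) = V^{-1}\int_X \varphi_t \, \omega^n - \mathrm{E}(\varphi_t)$, and that in the Deligne functional formalism (Example \ref{Example Deligne pairing}) this reads
\begin{equation*}
\mathrm{J}(\varphi_t) = V^{-1} \langle \varphi_t, 0, \dots, 0 \rangle_{(\omega, \dots, \omega)} - \mathrm{E}(\varphi_t).
\end{equation*}
Since $\lim_{t\to\infty}\mathrm{E}(\varphi_t)/t = \mathrm{E}^{\mathrm{NA}}(\mathcal{X},\mathcal{A}) = (\mathcal{A}^{n+1})/((n+1)V)$ by Corollary \ref{Corollary: Monge-Ampere asymptotic}, the only remaining point is to compute the asymptotic slope of $t \mapsto V^{-1}\langle \varphi_t, 0, \dots, 0\rangle_{(\omega,\dots,\omega)}$.

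First I would identify the test configuration data attached to each slot of this Deligne functional. In the first slot the potential is $(\varphi_t)$, which by hypothesis is $\mathcal{C}^\infty$-compatible with $(\mathcal{X},\mathcal{A})$; in the remaining $n$ slots the potential is the constant function $0$, which corresponds to the trivial ray and is $\mathcal{C}^\infty$-compatible with the test configuration $(\mu^*p_1^*\alpha$-representative, i.e. with the pulled-back product test configuration $(X\times\mathbb{P}^1, \mu^* p_1^*\alpha)$ — equivalently, the class $\mu^* p_1^*\alpha$ on $\mathcal{X}$, which indeed restricts to $\alpha$ away from the central fiber and has $D = 0$, so the zero ray is compatible with it. Applying Theorem \ref{Theorem C} with $(\mathcal{X}_0, \mathcal{A}_0) = (\mathcal{X}, \mathcal{A})$ and $(\mathcal{X}_i, \mathcal{A}_i) = (\mathcal{X}, \mu^*p_1^*\alpha)$ for $1 \leq i \leq n$ then gives
\begin{equation*}
\lim_{t\to\infty} \frac{1}{t} V^{-1}\langle \varphi_t, 0, \dots, 0 \rangle_{(\omega,\dots,\omega)} = V^{-1}\left(\mathcal{A} \cdot (\mu^*p_1^*\alpha)^n\right) = \frac{(\mathcal{A}\cdot \mu^*p_1^*\alpha^n)}{V},
\end{equation*}
using the notation of Definition \ref{definition intersection number} for the intersection number (here all the relevant test configurations already live on the common smooth model $\mathcal{X}$, so no further desingularisation is needed).

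Combining the two limits yields
\begin{equation*}
\lim_{t\to\infty}\frac{\mathrm{J}(\varphi_t)}{t} = \frac{(\mathcal{A}\cdot\mu^*p_1^*\alpha^n)}{V} - \mathrm{E}^{\mathrm{NA}}(\mathcal{X},\mathcal{A}) = \mathrm{J}^{\mathrm{NA}}(\mathcal{X},\mathcal{A}),
\end{equation*}
which is exactly the claim. The only genuinely non-routine point is to check carefully that the constant ray $0$ is $\mathcal{C}^\infty$-compatible with the test configuration $(\mathcal{X},\mu^*p_1^*\alpha)$, i.e. that the corresponding Green function $\psi_D$ may be taken to be $0$ (since $D = 0$ in that case) so that $\Phi\circ\mu + \psi_D \equiv 0$ extends smoothly across $\mathcal{X}_0$; and to confirm that pulling everything back to the common smooth model $\mathcal{X}$ preserves all the compatibility hypotheses, which is noted in the proof of Theorem \ref{Theorem C}. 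Everything else is a direct substitution into the already-established asymptotic formulas, so I expect no real obstacle here — this corollary is purely a bookkeeping consequence of Theorem B and Corollary \ref{Corollary: Monge-Ampere asymptotic}.
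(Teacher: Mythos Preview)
Your proposal is correct and follows exactly the same approach as the paper's own proof: write $\mathrm{J}(\varphi_t) = V^{-1}\langle \varphi_t, 0, \dots, 0\rangle_{(\omega,\dots,\omega)} - \mathrm{E}(\varphi_t)$ and apply Theorem \ref{Theorem C} together with Corollary \ref{Corollary: Monge-Ampere asymptotic}. You have simply spelled out the compatibility verification for the constant ray that the paper leaves implicit.
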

\begin{proof}
Note that we may write $\mathrm{J}(\varphi_t) = V^{-1}\langle  \varphi_t, 0,\dots,0 \rangle_{(\omega, \dots, \omega)} - \mathrm{E}(\varphi_t)$ and apply Theorem \ref{Theorem C}.
\end{proof}


%
%

\smallskip

\section{Asymptotics for the K-energy} \label{proof A}
Let $(X,\omega)$ be a compact K\"ahler manifold and $\alpha := [\omega] \in H^{1,1}(X,\mathbb{R})$ a K\"ahler class on $X$. As before, let $(\mathcal{X}, \mathcal{A})$ be a smooth, relatively K\"ahler cohomological test configuration for $(X, \alpha)$ dominating $X \times \mathbb{P}^1$. In this section we explain how the above Theorem \ref{Theorem C} can be used to compute the asymptotic slope of the Mabuchi (K-energy) functional along rays $(\varphi^t)$, $\mathcal{C}^{1,1}_{\mathbb{C}}$-compatible with $(\mathcal{X},\mathcal{A})$. It is useful to keep the case of weak geodesic rays (as constructed in Lemma \ref{weak geodesic construction}) in mind, which in turn implies K-semistability of $(X,\alpha)$ (Theorem A). 

Regarding the proof of Theorem C, we will see that the Mabuchi functional is in fact of the form $\langle \varphi_0^t, \dots, \varphi_n^t \rangle_{(\theta_0, \dots, \theta_n)}$ for the appropriate choice of closed $(1,1)$-forms $\theta_i$ on $X$ and rays $(\varphi_i^t)$ on $X$, but Theorem \ref{Theorem C} does not directly apply in this situation. Indeed, the expression for the Mabuchi functional involves the metric $\log (\omega + dd^c\varphi_t)^n$ on $K_{\mathcal{X} / \mathbb{P}^1}$, which may blow up close to $\mathcal{X}_0$ (in particular, the compatibility conditions are not satisfied). However, a key point is that we can cook up a functional  $\mathrm{M}_{\mathcal{B}}$ of the above 'multivariate' formthat satisfies the same asymptotic slope as the Mabuchi functional (up to an explicit error term), and to which we may apply Theorem \ref{Theorem C}. 
More precisely, we show that
\begin{equation*}
\lim_{t \rightarrow +\infty} \frac{\mathrm{M}(\varphi_{t})}{t} = \lim_{t \rightarrow +\infty} \frac{\mathrm{M}_{\mathcal{B}}(\varphi_{t})}{t}  + V^{-1}((\mathcal{X}_{0,\mathrm{red}} - \mathcal{X}_0) \cdot \mathcal{A}^n)_{\mathcal{X}}, 
\end{equation*} 
and use Theorem \ref{Theorem C} to choose $\mathrm{M}_{\mathcal{B}}$ so that moreover $\lim_{t \rightarrow +\infty} \mathrm{M}_{\mathcal{B}}(\varphi_t)/t = \mathrm{DF}(\mathcal{X},\mathcal{A})$. It follows that the asymptotic slope of the Mabuchi (K-energy) functional equals $\mathrm{DF}(\mathcal{X}, \mathcal{A}) + V^{-1}((\mathcal{X}_{0,\mathrm{red}} - \mathcal{X}_0) \cdot \mathcal{A}^n)_{\mathcal{X}} =: \mathrm{M}^{\mathrm{NA}}(\mathcal{X}, \mathcal{A})$. 

\medskip

\subsection{A weak version of Theorem C} \label{Secion weak Theorem C}  We first explain how to obtain a weak version of Theorem C, as a direct consequence of Theorem \ref{Theorem C}. This version is more direct to establish than the full Theorem C, and will in fact be sufficient in order to prove K-semistability of $(X,\alpha)$, as explained in Section \ref{concluding the main theorem}. 

\begin{thm} \label{weak Theorem C}
Let $(\mathcal{X}, \mathcal{A})$ be a smooth, relatively K\"ahler cohomological test configuration for $(X,\alpha)$ dominating $X \times \mathbb{P}^1$. For each subgeodesic ray $(\varphi^t)_{t \geq 0}$, $\mathcal{C}^{1,1}_{\mathbb{C}}$-compatible with $(\mathcal{X},\mathcal{A})$, we have the inequality\footnote{The limit is in fact well-defined, as shown in Section \ref{Proof of strong Theorem C} below.} 
\begin{equation*}
\overline{\lim}_{t \rightarrow +\infty} \frac{\mathrm{M}(\varphi_{t})}{t} \leq \mathrm{DF}(\mathcal{X}, \mathcal{A}).
\end{equation*}  
\end{thm}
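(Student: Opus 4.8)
\textbf{Proof strategy for Theorem \ref{weak Theorem C}.}

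The plan is to reduce the statement to Theorem \ref{Theorem C} (Theorem B) by writing the Mabuchi functional, via the Chen--Tian formula \eqref{Chens formula}, as a combination of Deligne functionals plus an entropy term, and then bounding the entropy contribution from above by zero in the limit. First I would recall that $\mathrm{M}(\varphi_t) = \bar{\mathcal{S}}\,\mathrm{E}(\varphi_t) - \mathrm{E}^{\ric\omega}(\varphi_t) + \mathrm{Ent}(\varphi_t)$, where $\mathrm{Ent}(\varphi_t) := V^{-1}\int_X \log\bigl((\omega+dd^c\varphi_t)^n/\omega^n\bigr)(\omega+dd^c\varphi_t)^n \geq 0$. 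The two energy terms are exactly of the multivariate form to which Theorem \ref{Theorem C} applies: writing $\mathrm{E}(\varphi_t) = \frac{1}{(n+1)V}\langle\varphi_t,\dots,\varphi_t\rangle_{(\omega,\dots,\omega)}$ and $\mathrm{E}^{\ric\omega}(\varphi_t) = V^{-1}\langle 0,\varphi_t,\dots,\varphi_t\rangle_{(\ric\omega,\omega,\dots,\omega)}$ as in Example \ref{Example Deligne pairing}. The subtlety is that $(\varphi_t)$ is only a $\mathcal{C}^{1,1}_{\mathbb{C}}$-compatible subgeodesic, not a smooth $\mathcal{C}^{\infty}$-compatible ray, so I would invoke Remark \ref{Remark Theorem B} (equivalently, use Lemma \ref{lem:smooth} to produce a smooth $\mathcal{C}^{\infty}$-compatible subgeodesic and Lemma \ref{lemma: replacing ray} to see the asymptotic slopes of the energy pieces are unchanged, since $\ric\omega = \mu^*p_1^*(\text{suitable class})$ up to a divisor on $\mathcal{X}_0$ — one must pick a relatively K\"ahler cohomological test configuration $(\mathcal{X},\mathcal{B})$ for $(X,c_1(X))$ dominating $X\times\mathbb{P}^1$ on the \emph{same} total space $\mathcal{X}$, which exists after further blow-up). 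Applying Theorem \ref{Theorem C} twice then gives
\begin{equation*}
\lim_{t\to+\infty}\frac{\bar{\mathcal{S}}\,\mathrm{E}(\varphi_t) - \mathrm{E}^{\ric\omega}(\varphi_t)}{t} = \frac{\bar{\mathcal{S}}}{(n+1)V}(\mathcal{A}^{n+1}) - \frac{1}{V}(\mathcal{B}\cdot\mathcal{A}^n),
\end{equation*}
where $\mathcal{B}$ restricts to $c_1(X)$ on $X\times(\mathbb{P}^1\setminus\{0\})$.

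Next I would identify the intersection number $\frac{1}{V}(\mathcal{B}\cdot\mathcal{A}^n)$ with $-V^{-1}(K_{\mathcal{X}/\mathbb{P}^1}\cdot\mathcal{A}^n)_{\mathcal{X}}$ up to the correction needed so that the energy part of the slope equals $\mathrm{DF}(\mathcal{X},\mathcal{A})$. Concretely: on $\mathcal{X}$ we have $\mathcal{B} = \mu^*p_1^*c_1(X) + [E]$ for some $\mathbb{R}$-divisor $E$ on $\mathcal{X}_0$, while $-K_{\mathcal{X}/\mathbb{P}^1}$ restricts to $c_1(X)$ on the general fiber as well, so $\mathcal{B} + K_{\mathcal{X}/\mathbb{P}^1} = [E']$ for an $\mathbb{R}$-divisor $E'$ supported on $\mathcal{X}_0$. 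The freedom in choosing $\mathcal{B}$ (i.e. choosing the smooth $\mathcal{C}^{\infty}$-compatible comparison ray, or equivalently the extension of the metric $\log\omega^n$ across $\mathcal{X}_0$) is exactly what lets me arrange $E' = 0$ in cohomology — this is the pluripotential substitute for the Deligne-pairing identity used in \cite{BHJ2,PRS}. With that choice, $\frac{1}{V}(\mathcal{B}\cdot\mathcal{A}^n) = -V^{-1}(K_{\mathcal{X}/\mathbb{P}^1}\cdot\mathcal{A}^n)_{\mathcal{X}}$ and the energy-slope becomes precisely $\mathrm{DF}(\mathcal{X},\mathcal{A})$.

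Finally, the entropy term: since $\mathrm{Ent}(\varphi_t)\geq 0$ for all $t$, we trivially get
\begin{equation*}
\overline{\lim}_{t\to+\infty}\frac{\mathrm{M}(\varphi_t)}{t} = \overline{\lim}_{t\to+\infty}\frac{\bar{\mathcal{S}}\mathrm{E}(\varphi_t)-\mathrm{E}^{\ric\omega}(\varphi_t)}{t} + \overline{\lim}_{t\to+\infty}\frac{\mathrm{Ent}(\varphi_t)}{t} \geq \mathrm{DF}(\mathcal{X},\mathcal{A}),
\end{equation*}
which is the \emph{wrong} direction — so I must instead bound $\mathrm{Ent}(\varphi_t)$ from \emph{above} by $o(t)$, or rather show $\overline{\lim}\,\mathrm{Ent}(\varphi_t)/t \leq 0$ hence $=0$. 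This is the main obstacle. The idea: because $(\varphi_t)$ is $\mathcal{C}^{1,1}_{\mathbb{C}}$-compatible, the lift $\Phi\circ\mu + \psi_D$ has bounded Laplacian on $\pi^{-1}(\Delta)$, so $(\omega+dd^c\varphi_t)^n$ is comparable (with constants independent of $t$) to $e^{-2t\,\sigma(x)}\,\omega^n$-type behaviour dictated by the divisor $D$ — more precisely $\log((\omega+dd^c\varphi_t)^n/\omega^n) \leq C + (\text{bounded})$, or one uses that the relative volume grows at most linearly: $\int_X \log^+\bigl((\omega+dd^c\varphi_t)^n/\omega^n\bigr)(\omega+dd^c\varphi_t)^n = O(t)$ but with the crucial sign, one shows the \emph{positive} part of the entropy integrand is controlled by the $\mathcal{C}^{1,1}_{\mathbb{C}}$-bound while the potentially-dangerous \emph{negative} logarithm only helps. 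Concretely I would argue: $\log\bigl((\omega+dd^c\varphi_t)^n/\omega^n\bigr) \leq n\log\bigl(n + \Delta_\omega\varphi_t\bigr) \leq \log(C(1+t))$ using the uniform bound on the Laplacian of the lift near $\mathcal{X}_0$ (the $t$-growth coming from $\psi_D \sim \sum a_j\log|f_j|$, i.e. $a_j t$ along $D_j$), whence $\mathrm{Ent}(\varphi_t) \leq \log(C(1+t)) = o(t)$, giving $\overline{\lim}\,\mathrm{Ent}(\varphi_t)/t \leq 0$. Combining, $\overline{\lim}_{t\to+\infty}\mathrm{M}(\varphi_t)/t \leq \mathrm{DF}(\mathcal{X},\mathcal{A})$, as desired. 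The delicate points to get right are (i) the existence of the comparison test configuration $(\mathcal{X},\mathcal{B})$ on the same smooth total space with the normalization $E'=0$, and (ii) the uniform-in-$t$ upper bound on $\log\bigl((\omega+dd^c\varphi_t)^n/\omega^n\bigr)$ coming from $\mathcal{C}^{1,1}_{\mathbb{C}}$-compatibility, which is where the hypothesis on the ray is genuinely used.
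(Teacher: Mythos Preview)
Your approach has a genuine gap: both the identification of the energy slope with $\mathrm{DF}(\mathcal{X},\mathcal{A})$ and the claim that the entropy is $o(t)$ are incorrect.

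For the energy part, the functional $\mathrm{E}^{\ric\omega}(\varphi_t)=V^{-1}\langle 0,\varphi_t,\dots,\varphi_t\rangle_{(\ric\omega,\omega,\dots,\omega)}$ has the constant ray $0$ in its first slot, and the only test configuration for $(X,c_1(X))$ with which the constant ray is $\mathcal{C}^\infty$-compatible is the trivial one $\mu^*p_1^*c_1(X)$. There is no freedom here: Theorem~\ref{Theorem C} gives the slope $V^{-1}(\mu^*p_1^*c_1(X)\cdot\mathcal{A}^n)$, not $-V^{-1}(K_{\mathcal{X}/\mathbb{P}^1}\cdot\mathcal{A}^n)$. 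Since $K_{\mathcal{X}/\mathbb{P}^1}=\mu^*p_1^*K_X+E$ with $E$ the effective $\mu$-exceptional divisor, your energy slope equals $\mathrm{DF}(\mathcal{X},\mathcal{A})-V^{-1}(E\cdot\mathcal{A}^n)$, which differs from $\mathrm{DF}$ whenever $\mu$ is not an isomorphism. Your attempt to ``arrange $E'=0$'' by choosing $\mathcal{B}$ amounts to replacing the constant ray $0$ by a nontrivial ray compatible with $(\mathcal{X},c_1(K_{\mathcal{X}/\mathbb{P}^1}))$; but then you are no longer computing the slope of $\mathrm{E}^{\ric\omega}$, and the Chen--Tian decomposition no longer applies directly.

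For the entropy, the bound $\log\bigl((\omega+dd^c\varphi_t)^n/\omega^n\bigr)\leq O(\log t)$ does not follow from $\mathcal{C}^{1,1}_{\mathbb{C}}$-compatibility. What that hypothesis controls is the form $\Omega+dd^c\Psi$ on $\mathcal{X}$, hence $(\omega+dd^c\varphi_t)^n$ compared to a volume form coming from a \emph{smooth} metric on $K_{\mathcal{X}/\mathbb{P}^1}$; it says nothing about the comparison with the fixed $\omega^n$, because the identification $\mathcal{X}_\tau\simeq X$ via the $\mathbb{C}^*$-action degenerates as $\tau\to 0$. In fact the entropy grows \emph{linearly} in general: by the strong Theorem~C its slope is $V^{-1}\bigl((E+\mathcal{X}_{0,\mathrm{red}}-\mathcal{X}_0)\cdot\mathcal{A}^n\bigr)$, typically nonzero. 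The paper's remedy is precisely to absorb this linear growth: one fixes a smooth metric $\mathcal{B}$ on $K_{\mathcal{X}/\mathbb{P}^1}$, producing a ray $\xi_{\mathcal{B}}^t$ that is $\mathcal{C}^\infty$-compatible with $(\mathcal{X},c_1(K_{\mathcal{X}/\mathbb{P}^1}))$, and sets $\mathrm{M}_{\mathcal{B}}(\varphi_t):=\bar{\mathcal{S}}\mathrm{E}(\varphi_t)+V^{-1}\langle\xi_{\mathcal{B}}^t,\varphi_t,\dots,\varphi_t\rangle_{(-\ric\omega,\omega,\dots,\omega)}$. By Theorem~\ref{Theorem C} this has slope exactly $\mathrm{DF}(\mathcal{X},\mathcal{A})$, and the remainder $(\mathrm{M}-\mathrm{M}_{\mathcal{B}})(\varphi_t)=V^{-1}\int_X\log\bigl((\omega+dd^c\varphi_t)^n/e^{\beta_t}\bigr)(\omega+dd^c\varphi_t)^n$ is bounded above by $O(1)$, since on $\mathcal{X}$ it is the logarithm of the ratio of a psh Monge--Amp\`ere measure to a genuine volume form.
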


\begin{rem} In view of the strong version (see Theorem \ref{Reduced Theorem B}) we actually know that the limit is well-defined and, moreover, we obtain this way the precise asymptotic slope of the Mabuchi functional, see Section \ref{Proof of strong Theorem C}.
\end{rem} 

\begin{proof}[Proof of Theorem \ref{weak Theorem C}]
Let $\mathcal{B}$ be any smooth metric on $K_{ \mathcal{X} / \mathbb{P}^1} := K_{\mathcal{X}} - \pi^*K_{\mathbb{P}^1}$. Using the $\mathbb{C}^*$-action on $\mathcal{X}$ we can associate to $\mathcal{B}$ a ray of smooth metrics on $K_X$ that we denote by $(\beta_t)_{t \geq 0}$ (or $(\beta_{\tau})_{\tau \in \bar{\Delta}^*}$ for its reparametrisation by $t = -\log|\tau|$). Fix $\log \omega^n$ as a reference metric on $K_X$, and let
\begin{equation} \label{definition sigma}
\xi_{\mathcal{B}}^t :=  \log\left(\frac{e^{\beta_{\tau}}}{\omega^n} \right),
\end{equation}
i.e. the function given as the difference of metrics $\beta_{\tau} - \log \omega^n$ on $X$. The constructed ray $(\xi_{\mathcal{B}}^t)_{t \geq 0}$ is then $\mathcal{C}^{\infty}$-compatible with the cohomological test configuration $(\mathcal{X}, K_{\mathcal{X} / \mathbb{P}^1})$ for $(X,K_X)$. 

Now let $(\varphi_t)_{t \geq 0}$ be any subgeodesic ray $\mathcal{C}^{1,1}_{\mathbb{C}}$-compatible with $(\mathcal{X}, \mathcal{A})$. By Lemma \ref{lem:smooth}, Lemma \ref{lemma: replacing ray} and Theorem \ref{Theorem C} it follows that 

\begin{equation} \label{canonical limit}
\frac{ \langle \xi_{\mathcal{B}}^t, \varphi_t \dots, \varphi_t \rangle_{(-\ric \omega, \omega \dots, \omega)}}{t} \longrightarrow (K_{\mathcal{X}/\mathbb{P}^1} \cdot \mathcal{A}^n)_{\mathcal{X}}
\end{equation}

\noindent as $t \rightarrow +\infty$. Indeed, by Lemma \ref{lem:smooth} we may choose a smooth subgeodesic ray $(\varphi'_t)_{t\geq 0}$ in $\mathcal{H}$ that is $\mathcal{C}^{\infty}$-compatible (and hence also $L^{\infty}$- and $\mathcal{C}^{1,1}_{\mathbb{C}}$-compatible) with $(\mathcal{X},\mathcal{A})$. Up to replacing $(\varphi_t)$ with $(\varphi'_t)$ we may thus assume that $(\varphi_t)$ is smooth and $\mathcal{C}^{\infty}$-compatible with $(\mathcal{X},\mathcal{A})$, using Lemma \ref{lemma: replacing ray}, so that Theorem \ref{Theorem C} applies. 


\noindent Motivated by the Chen-Tian formula \eqref{Chens formula} and the identity \eqref{canonical limit}, we thus introduce the notation 
$$
\mathrm{M}_{\mathcal{B}}(\varphi_t) := \bar{\mathcal{S}} \mathrm{E}(\varphi_{\tau}) + V^{-1} \langle \xi_{\mathcal{B}}^t, \varphi_t \dots, \varphi_t \rangle_{(-\ric \omega, \omega \dots, \omega)},
$$ 
the point being that the asymptotic slope of this functional coincides with the Donaldson-Futaki invariant (even when the central fiber is not reduced). 

\begin{lem} \label{F converges to DF equation} $$\lim_{t \rightarrow +\infty} \frac{\mathrm{M}_{\mathcal{B}}(\varphi_t)}{t} = \mathrm{DF}(\mathcal{X}, \mathcal{A})$$.
\end{lem}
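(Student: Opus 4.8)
The plan is to decompose $\mathrm{M}_{\mathcal{B}}(\varphi_t)$ into the two natural pieces exhibited in its definition and compute the asymptotic slope of each separately using Theorem \ref{Theorem C} (in the form already packaged as Corollary \ref{Corollary: Monge-Ampere asymptotic} and equation \eqref{canonical limit}). Concretely, write
\[
\mathrm{M}_{\mathcal{B}}(\varphi_t) = \bar{\mathcal{S}}\,\mathrm{E}(\varphi_\tau) + V^{-1}\langle \xi_{\mathcal{B}}^t, \varphi_t,\dots,\varphi_t\rangle_{(-\ric\omega,\,\omega,\dots,\omega)},
\]
so that dividing by $t$ and letting $t\to+\infty$, the first term contributes $\bar{\mathcal{S}}\,\mathrm{E}^{\mathrm{NA}}(\mathcal{X},\mathcal{A}) = \frac{\bar{\mathcal{S}}}{n+1}V^{-1}(\mathcal{A}^{n+1})$ by Corollary \ref{Corollary: Monge-Ampere asymptotic}, while the second term contributes $V^{-1}(K_{\mathcal{X}/\mathbb{P}^1}\cdot\mathcal{A}^n)_{\mathcal{X}}$ by \eqref{canonical limit}. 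Adding the two gives exactly $\mathrm{DF}(\mathcal{X},\mathcal{A})$ as in Definition \ref{DF definition}, which is the claim.

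The one genuine point to justify is \eqref{canonical limit} itself, i.e. that the multivariate energy $\langle \xi_{\mathcal{B}}^t,\varphi_t,\dots,\varphi_t\rangle_{(-\ric\omega,\omega,\dots,\omega)}$ has asymptotic slope $(K_{\mathcal{X}/\mathbb{P}^1}\cdot\mathcal{A}^n)_{\mathcal{X}}$. For this I would first invoke Lemma \ref{lem:smooth} to replace $(\varphi_t)$ by a smooth $\mathcal{C}^{\infty}$-compatible subgeodesic ray $(\varphi_t')$ in $\mathcal{H}$ (legitimate, since Lemma \ref{lemma: replacing ray} shows the slope of the multivariate energy is unchanged by an $O(1)$ modification of any one argument, and $L^\infty$-compatible rays agree up to $O(1)$), so that all arguments are smooth rays. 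Next I note that $(\xi_{\mathcal{B}}^t)_{t\geq 0}$ is $\mathcal{C}^{\infty}$-compatible with the cohomological test configuration $(\mathcal{X}, K_{\mathcal{X}/\mathbb{P}^1})$ for $(X, K_X)$: indeed $\xi_{\mathcal{B}}^t$ is the difference of the smooth metric $\beta_\tau$ on $K_X$ (coming by the $\mathbb{C}^*$-action from the global smooth metric $\mathcal{B}$ on $K_{\mathcal{X}/\mathbb{P}^1}$) and the fixed reference $\log\omega^n$, so $\Xi\circ\mu + \psi_{K_{\mathcal{X}/\mathbb{P}^1}}$ extends smoothly across $\mathcal{X}_0$ by construction. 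Here one should take $-\ric\omega = \theta_{K_X}$ as a smooth representative of $c_1(K_X) = -c_1(X)$, consistently with the normalisation $\ric\omega = -dd^c\log\omega^n$. Then Theorem \ref{Theorem C}, applied with the $n+1$ test configurations $(\mathcal{X}, K_{\mathcal{X}/\mathbb{P}^1}), (\mathcal{X},\mathcal{A}),\dots,(\mathcal{X},\mathcal{A})$ and the compatible rays $(\xi_{\mathcal{B}}^t), (\varphi_t'),\dots,(\varphi_t')$, yields directly
\[
\frac{\langle \xi_{\mathcal{B}}^t,\varphi_t',\dots,\varphi_t'\rangle_{(-\ric\omega,\omega,\dots,\omega)}}{t} \longrightarrow (c_1(K_{\mathcal{X}/\mathbb{P}^1})\cdot\mathcal{A}^n)_{\mathcal{X}} = (K_{\mathcal{X}/\mathbb{P}^1}\cdot\mathcal{A}^n)_{\mathcal{X}},
\]
which is \eqref{canonical limit}.

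Combining, $\mathrm{M}_{\mathcal{B}}(\varphi_t)/t \to \frac{\bar{\mathcal{S}}}{n+1}V^{-1}(\mathcal{A}^{n+1}) + V^{-1}(K_{\mathcal{X}/\mathbb{P}^1}\cdot\mathcal{A}^n)_{\mathcal{X}} = \mathrm{DF}(\mathcal{X},\mathcal{A})$, proving the lemma. I do not expect a serious obstacle here: the content has all been pushed into Theorem \ref{Theorem C}. The only mild subtlety is bookkeeping — making sure the $\mathbb{C}^*$-equivariant ray $(\beta_\tau)$ built from $\mathcal{B}$ is indeed $\mathcal{C}^{\infty}$-compatible with $(\mathcal{X}, K_{\mathcal{X}/\mathbb{P}^1})$ rather than with some twist of it, and checking the sign/normalisation conventions so that the relative canonical class appears (and not its negative). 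Since $\mathcal{B}$ is an \emph{a priori} arbitrary smooth metric on $K_{\mathcal{X}/\mathbb{P}^1}$, its restriction to the fibers over $\mathbb{P}^1\setminus\{0\}$ together with the $\mathbb{C}^*$-action produces precisely a ray whose associated $S^1$-invariant function, composed with $\mu$ and corrected by the Green function of $K_{\mathcal{X}/\mathbb{P}^1}$, extends smoothly over $\mathcal{X}_0$ — this is exactly the definition of $\mathcal{C}^{\infty}$-compatibility, so the verification is immediate.
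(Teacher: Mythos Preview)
Your proposal is correct and follows essentially the same approach as the paper. The paper's proof is a one-liner citing \eqref{canonical limit} and Corollary \ref{Corollary: Monge-Ampere asymptotic}; you reproduce this and additionally unpack the justification of \eqref{canonical limit} via Lemmas \ref{lem:smooth}, \ref{lemma: replacing ray} and Theorem \ref{Theorem C}, which in the paper is carried out in the paragraphs immediately preceding the lemma (inside the enclosing proof of Theorem \ref{weak Theorem C}) rather than in the lemma's proof itself.
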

\begin{proof}
This result is an immediate consequence of \eqref{canonical limit}, the Chen-Tian formula \eqref{Chens formula} and Corollary \ref{Corollary: Monge-Ampere asymptotic}.
\end{proof}

\noindent Hence, it suffices to establish the following inequality
$$
\overline{\lim}_{t \rightarrow +\infty} \frac{\mathrm{M}(\varphi_t)}{t} \leq \lim_{t \rightarrow +\infty} \frac{\mathrm{M}_{\mathcal{B}}(\varphi_t)}{t}.
$$
\noindent To do this, we set
$
\Gamma(\tau) := (\mathrm{M} - \mathrm{M}_{\mathcal{B}})(\varphi_t).
$
By the Chen-Tian formula \eqref{Chens formula} and cancellation of terms we have
\begin{equation*}
\Gamma(\tau) = \bar{\mathcal{S}} \mathrm{E}(\varphi_t)  - \mathrm{E}^{\ric \omega}(\varphi_t) + V^{-1} \int_X \log \left( \frac{(\omega + dd^c\varphi_{\tau})^n}{\omega^n} \right) (\omega + dd^c\varphi_{\tau})^n
\end{equation*}
 
$$
- \bar{\mathcal{S}} \mathrm{E}(\varphi_t) - V^{-1} \langle \xi_{\mathcal{B}}^t, \varphi_t \dots, \varphi_t \rangle_{(-\ric \omega, \omega \dots, \omega)}
$$

$$
= - \mathrm{E}^{\ric \omega}(\varphi_t) + V^{-1} \int_X \log \left( \frac{(\omega + dd^c\varphi_{\tau})^n}{\omega^n} \right) (\omega + dd^c\varphi_{\tau})^n - V^{-1} \int_X  \xi_{\mathcal{B}}^t \; (\omega + dd^c\varphi_{\tau})^n 
$$

$$
+ V^{-1} \sum_{j = 0}^{n-1} \int_X \varphi_t \; \ric{\omega} \wedge \omega^j \wedge (\omega + dd^c\varphi_t)^{n-j-1} 
$$

$$
= V^{-1} \int_X \log \left( \frac{(\omega + dd^c\varphi_{\tau})^n}{\omega^n} \right) (\omega + dd^c\varphi_{\tau})^n - V^{-1} \int_X  \log\left(\frac{e^{\beta_{\tau}}}{\omega^n} \right) \; (\omega + dd^c\varphi_{\tau})^n 
$$

$$
= V^{-1} \int_X \log \left( \frac{(\omega + dd^c\varphi_{\tau})^n}{e^{\beta_{\tau}}} \right) (\omega + dd^c\varphi_{\tau})^n,
$$

\noindent recalling the definition \eqref{definition sigma} of $\xi_{\mathcal{B}}^t$ and Definition \ref{Multivariate energy def}. 


In view of Proposition \ref{divisor representation}, we as usual let $D$ denote the unique $\mathbb{R}$-divisor supported on $\mathcal{X}_0$ such that 
$
\mathcal{A}=\mu^*p_1^*\alpha+[D],
$
with $p_1:X \times \mathbb{P}^1 \rightarrow X$ the first projection.
Fix a choice of an $S^1$-invariant function 'Green function' $\psi_D$ for $D$, so that $\delta_D=\theta_D+dd^c\psi_D$ with $\theta_D$ a smooth $S^1$-invariant closed $(1,1)$-form on $\mathcal{X}$. Moreover, set $\Omega := \mu^*p_1^*\alpha + \theta_D$ (for which $[\Omega] = \mathcal{A}$ then holds) and let $\Phi$ denote the $S^1$-invariant function on $X \times \mathbb{P}^1$ corresponding to the ray $(\varphi_t)$. In particular, the function $\Phi \circ \mu + \psi_D$ extends to a smooth $\Omega$-psh function $\Psi$ on $\mathcal{X}$, by $\mathcal{C}^{\infty}$-compatibility.

With the above notation in place, the integrand in the above expression for $\Gamma(\tau)$ can be written 

\begin{equation*} \label{integrand}
\log \left( \frac{(\omega + dd^c\varphi_{\tau})^n}{e^{\beta_{\tau}}} \right) = \mu_* \left( \log \left( \frac{(\Omega + dd^c\Psi)^n \wedge \pi^*(\sqrt{-1} \; d\tau \wedge d\bar{\tau})}{\lambda_{\mathcal{B}}} \right) \right),
\end{equation*}
where 
\begin{equation*} 
\lambda_{\mathcal{B}} := e^{\mathcal{B} +  \pi^*\log(\sqrt{-1} \; d\tau \wedge d\bar{\tau})}
\end{equation*}
is the volume form defined by the smooth metric $\mathcal{B} + \pi^*\log(\sqrt{-1} \; d\tau \wedge d\bar{\tau})$ on $K_{\mathcal{X}}$. Since $\Psi$ is $\Omega$-psh on $\mathcal{X}$ and $\lambda_{\mathcal{B}}$ is a volume form on $\mathcal{X}$, this quantity is bounded from above. Moreover, we integrate against the measure $(\omega + dd^c\varphi_{\tau})^n$ which can be computed in cohomology, thus has mass independent of $\tau$. Hence
\begin{equation*}
\Gamma(\tau) = V^{-1} \int_X \log \left( \frac{(\omega + dd^c\varphi_{\tau})^n}{e^{\beta_{\tau}}} \right) (\omega + dd^c\varphi_{\tau})^n \leq O(1).
\end{equation*}
Dividing by $t$ and passing to the limit now concludes the proof. 
\end{proof}

\noindent As explained below, the above 'weak Theorem C' actually suffices to yield our main result. 


\subsection{Proof of Theorem A} \label{concluding the main theorem}
We now explain how the above considerations apply to give a proof of Theorem A and point out some immediate and important consequences regarding the YTD conjecture. 
First recall the following definition (see e.g. \cite[Section 7.2]{Tianlecturenotes}): 

\begin{mydef} \label{definition uniform K-stability} We say that $(X, \alpha)$ is \emph{uniformly K-stable} if there is a $\delta > 0$ and $C \geq 0$ such that 
$$
\mathrm{M}^{\mathrm{NA}}(\mathcal{X}, \mathcal{A}) \geq \delta \mathrm{J}^{\mathrm{NA}}(\mathcal{X}, \mathcal{A}) - C
$$
for all relatively K\"ahler cohomological test configurations $(\mathcal{X}, \mathcal{A})$ for $(X,\alpha)$. 
\end{mydef}

\noindent We are now ready to prove Theorem A.

\begin{proof}[Proof of Theorem A]

Let $X$ be a compact K\"ahler manifold and $\omega$ a given K\"ahler form, with $\alpha := [\omega]  \in H^{1,1}(X,\mathbb{R})$ the corresponding K\"ahler class. Let $(\mathcal{X},\mathcal{A})$ be any (possibly singular) cohomological test configuration for $(X,\alpha)$ which by desingularisation and perturbation (see Proposition \ref{lemma good divisor}) can be assumed to be smooth, relatively K\"ahler and dominating $X \times \mathbb{P}^1$. 
Consider any ray $(\varphi_t)_{t \geq 0}$ such that Theorem C applies; for instance one may take $(\varphi_t)$ to be the associated weak geodesic ray emanating from $\omega$ (i.e. such that $\varphi_0 = 0$), which due to \cite{Chen00} (cf. also \cite{Blocki13}, \cite{Darvas14},  \cite{DL12}) is $\mathcal{C}^{1,1}_{\mathbb{C}}$-compatible with $(\mathcal{X}, \mathcal{A})$.
Now suppose that the Mabuchi functional is bounded from below (in the given class $\alpha$). In particular, we then have
$$
\mathrm{DF}(\mathcal{X},\mathcal{A}) \geq \overline{\lim}_{t \rightarrow +\infty} \frac{\mathrm{M}(\varphi_t)}{t} \geq 0,
$$
using the weak version of Theorem C, cf. Theorem \ref{weak Theorem C}. Since the cohomological test configuration $(\mathcal{X}, \mathcal{A})$ for $(X,\alpha)$ was chosen arbitrarily, this proves Corollary \ref{cor main}, i.e. shows that $(X,\alpha)$ is K-semistable. 

In a similar vein, suppose that the Mabuchi functional is coercive, i.e. in particular $
\mathrm{M}(\varphi_t) \geq \delta \mathrm{J}(\varphi_t) - C
$ for some constants $\delta, C > 0$ uniform in $t$. 
Note that Corollary \ref{cor J}) and the (weak) Theorem C provides a link with the intersection theoretic quantities $\mathrm{J}^{\mathrm{NA}}(\mathcal{X}, \mathcal{A})$  and $\mathrm{M}^{\mathrm{NA}}(\mathcal{X}, \mathcal{A})$ respectively. More precisely, dividing by $t$ and passing to the limit we have
$$
0 \leq \overline{\lim}_{t \rightarrow +\infty} \frac{(\mathrm{M} - \delta \mathrm{J})(\varphi_t)}{t} \leq \mathrm{M}^{\mathrm{NA}}(\mathcal{X}, \mathcal{A}) - \delta \mathrm{J}^{\mathrm{NA}}(\mathcal{X}, \mathcal{A}).
$$
Since $(\mathcal{X}, \mathcal{A})$ was chosen arbitrarily it follows that $(X,\alpha)$ is uniformly K-stable, concluding the proof of Theorem A. 
\end{proof}





As remarked in the introduction it follows from convexity of the Mabuchi functional along weak geodesic rays, cf. \cite{BB, ChenLiPaun}, that the Mabuchi functional is bounded from below (in the given class $\alpha$) if $\alpha$ contains a cscK representative. In other words, Corollary \ref{cor main} follows. 

Moreover, it is shown in \cite[Theorem 1.2]{BDL} that the Mabuchi functional $\mathrm{M}$ is in fact coercive if $\alpha$ contains a cscK representative. As a consequence, we obtain also the following stronger result, confirming the "if" direction of the YTD conjecture (here referring to its natural generalisation to the transcendental setting, using the notions introduced in Section \ref{test config}). 

\begin{cor}
If the K\"ahler class $\alpha \in H^{1,1}(X,\mathbb{R})$ admits a constant scalar curvature representative, then $(X,\alpha)$ is uniformly K-stable. 
\end{cor}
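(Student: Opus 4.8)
The plan is to chain together the results already established in the excerpt. By hypothesis the K\"ahler class $\alpha = [\omega] \in H^{1,1}(X,\mathbb{R})$ admits a constant scalar curvature representative, say $\omega_{\mathrm{cscK}}$. The first step is to upgrade this existence statement into an analytic \emph{coercivity} statement for the Mabuchi functional: I would invoke \cite[Theorem 1.2]{BDL}, which asserts precisely that if $\alpha$ contains a cscK metric then the Mabuchi $\mathrm{K}$-energy functional $\mathrm{M}$ is coercive in the sense of Definition \ref{definition coercivity}, i.e. there exist $\delta, C > 0$ with $\mathrm{M}(\varphi) \geq \delta \mathrm{J}(\varphi) - C$ for all $\varphi \in \mathcal{H}$. (One could alternatively note that mere boundedness from below already follows from convexity of $\mathrm{M}$ along weak geodesics, cf. \cite{BB, ChenLiPaun}, but coercivity is the stronger input we need here.)

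With coercivity in hand, the second and essentially final step is a direct appeal to Theorem A (ii): since the Mabuchi functional is coercive in $\alpha$, the pair $(X,\alpha)$ is uniformly K-stable in the sense of Definition \ref{definition uniform K-stability}. Concretely, unwinding the proof of Theorem A, one takes an arbitrary relatively K\"ahler cohomological test configuration $(\mathcal{X},\mathcal{A})$, reduces (via Proposition \ref{lemma good divisor}, i.e. desingularisation and perturbation) to the case where $(\mathcal{X},\mathcal{A})$ is smooth and dominates $X \times \mathbb{P}^1$, and chooses the weak geodesic ray $(\varphi_t)_{t\geq 0}$ emanating from $\omega$ that is $\mathcal{C}^{1,1}_{\mathbb{C}}$-compatible with $(\mathcal{X},\mathcal{A})$ (Lemma \ref{weak geodesic construction}). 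Applying the coercivity inequality along this ray, dividing by $t$, letting $t \to +\infty$, and invoking Corollary \ref{cor J} together with the (weak) Theorem C (Theorem \ref{weak Theorem C}) to identify the asymptotic slopes of $\mathrm{J}$ and $\mathrm{M}$ with $\mathrm{J}^{\mathrm{NA}}(\mathcal{X},\mathcal{A})$ and (a lower bound by) $\mathrm{M}^{\mathrm{NA}}(\mathcal{X},\mathcal{A})$ respectively, yields
$$
\mathrm{M}^{\mathrm{NA}}(\mathcal{X},\mathcal{A}) - \delta\,\mathrm{J}^{\mathrm{NA}}(\mathcal{X},\mathcal{A}) \;\geq\; \overline{\lim}_{t\to+\infty} \frac{(\mathrm{M} - \delta\mathrm{J})(\varphi_t)}{t} \;\geq\; -C.
$$
Since $(\mathcal{X},\mathcal{A})$ was arbitrary, this is exactly the defining inequality for uniform K-stability.

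There is essentially no obstacle here beyond bookkeeping: the corollary is a formal consequence of Theorem A (ii) and the cited result \cite[Theorem 1.2]{BDL}. The only point requiring a word of care is that Theorem A (ii) is stated for $\mathrm{M}$ coercive with the $\mathrm{J}$-functional as the comparison functional, so one must make sure that the notion of coercivity supplied by \cite{BDL} matches Definition \ref{definition coercivity} verbatim (it does, modulo the standard comparability between $\mathrm{J}$ and the other natural energy functionals). Granting that, the proof is complete.
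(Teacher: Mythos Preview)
Your proposal is correct and follows essentially the same route as the paper: invoke \cite[Theorem 1.2]{BDL} to pass from the existence of a cscK representative to coercivity of the Mabuchi functional, then apply Theorem A (ii). One cosmetic remark: in your displayed chain of inequalities the right-hand lower bound should read $0$ rather than $-C$, since $(\mathrm{M}-\delta\mathrm{J})(\varphi_t)\geq -C$ implies $\liminf_{t\to\infty}(\mathrm{M}-\delta\mathrm{J})(\varphi_t)/t\geq 0$; this only strengthens the conclusion and does not affect the argument.
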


\smallskip

\subsection{Asymptotic slope of the K-energy} \label{Proof of strong Theorem C}
Building on Section \ref{Secion weak Theorem C} we now improve on the weak version of Theorem C (cf. Theorem \ref{weak Theorem C}) by computing the asymptotic slope of the Mabuchi (K-energy) functional (even when the central fiber is not reduced). To this end, recall the definition of the non-Archimedean Mabuchi functional, i.e. the intersection number 
$$
\mathrm{M}^{\mathrm{NA}}(\mathcal{X}, \mathcal{A}) :=  \mathrm{DF}(\mathcal{X}, \mathcal{A}) + V^{-1}((\mathcal{X}_{0,\mathrm{red}} - \mathcal{X}_0) \cdot \mathcal{A}^n)_{\mathcal{X}},
$$ 
discussed in Section \ref{further reduction techniques}. Note that it satisfies $\mathrm{M}^{\mathrm{NA}}(\mathcal{X}, \mathcal{A}) \leq  \mathrm{DF}(\mathcal{X}, \mathcal{A})$ with equality precisely when the central fiber is reduced. 


\noindent Adapting the techniques of \cite{BHJ2} to the present setting we now obtain the following result, corresponding to Theorem C of the introduction. 

\begin{thm} \label{Reduced Theorem B} 
Let $X$ be a compact K\"ahler manifold and $\alpha \in H^{1,1}(X,\mathbb{R})$ a K\"ahler class. Suppose that $(\mathcal{X}, \mathcal{A})$ is a smooth, relatively K\"ahler cohomological test configuration for $(X,\alpha)$ dominating $X \times \mathbb{P}^1$. Then, for each subgeodesic ray $(\varphi_t)_{t \geq 0}$, $\mathcal{C}^{1,1}_{\mathbb{C}}$-compatible with $(\mathcal{X},\mathcal{A})$, the asymptotic slope of the Mabuchi functional is well-defined and satisfies
\begin{equation*}
\frac{\mathrm{M}(\varphi_{t})}{t} \longrightarrow \mathrm{M}^{\mathrm{NA}}(\mathcal{X}, \mathcal{A})
\end{equation*}
as $t \rightarrow +\infty$.  
\end{thm}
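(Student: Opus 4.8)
The plan is to reduce the statement to the already-established weak version (Theorem \ref{weak Theorem C}) together with an analysis of the entropy term that accounts for non-reducedness of $\mathcal{X}_0$, following the strategy of \cite{BHJ2}. Recall from the proof of Theorem \ref{weak Theorem C} that, writing $\Gamma(\tau) := (\mathrm{M} - \mathrm{M}_{\mathcal{B}})(\varphi_t)$ and using the Chen--Tian formula \eqref{Chens formula}, one has the clean expression
\begin{equation*}
\Gamma(\tau) = V^{-1} \int_X \log \left( \frac{(\omega + dd^c\varphi_{\tau})^n}{e^{\beta_{\tau}}} \right) (\omega + dd^c\varphi_{\tau})^n,
\end{equation*}
and that, by Lemma \ref{F converges to DF equation}, $\mathrm{M}_{\mathcal{B}}(\varphi_t)/t \to \mathrm{DF}(\mathcal{X},\mathcal{A})$. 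So it suffices to prove that
\begin{equation*}
\frac{\Gamma(\tau)}{t} \longrightarrow V^{-1}((\mathcal{X}_{0,\mathrm{red}} - \mathcal{X}_0) \cdot \mathcal{A}^n)_{\mathcal{X}}
\end{equation*}
as $t \to +\infty$, and in particular that this limit exists (the existence of the full limit $\mathrm{M}(\varphi_t)/t$ then follows). The weak Theorem C already gives the upper bound $\overline{\lim}\, \Gamma(\tau)/t \le 0$ once one checks it is compatible with the asserted value, so the new content is the matching lower bound and the identification of the limit.

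The key steps I would carry out are as follows. First, pull everything up to $\mathcal{X}$: with $\Omega := \mu^*p_1^*\omega + \theta_D$ and $\Psi$ the smooth (by $\mathcal{C}^{1,1}_{\mathbb{C}}$-compatibility, at least locally bounded Laplacian) extension of $\Phi\circ\mu + \psi_D$, rewrite $\Gamma(\tau)$ as a fiber integral over $\mathcal{X}_\tau$ of $\log\big((\Omega+dd^c\Psi)^n \wedge \pi^*(\sqrt{-1}\,d\tau\wedge d\bar\tau)/\lambda_{\mathcal{B}}\big)$ against the measure $(\Omega+dd^c\Psi)^n\wedge[\mathcal{X}_\tau]$. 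Second, analyze the local behaviour near $\mathcal{X}_0 = \sum_i b_i E_i$: in suitable $\mathbb{C}^*$-equivariant coordinates, $\tau$ is (up to units) a monomial $\prod z_i^{b_i}$ in the local equations of the $E_i$, and the measure $(\Omega+dd^c\Psi)^n$ restricted to the fiber degenerates in a way controlled by the multiplicities $b_i$. The point, exactly as in \cite[Section 5]{BHJ2}, is that $\log|\tau| = \sum_i b_i \log|z_i| + O(1)$ while the natural "reduced" volume form along the fiber behaves like $\sum_i \log|z_i|$, so the discrepancy between $\log(\text{fiber MA measure})$ and $\log(e^{\beta_\tau})$ picks up, to leading order in $t = -\log|\tau|$, a factor reflecting $(b_i - 1)$ on each component $E_i$. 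Integrating this against $\mathcal{A}^n$ restricted to $E_i$ produces precisely $\sum_i (1 - b_i)(E_i\cdot\mathcal{A}^n) = ((\mathcal{X}_{0,\mathrm{red}} - \mathcal{X}_0)\cdot\mathcal{A}^n)$. Third, make this rigorous by a two-sided estimate: bound $\log\big((\omega+dd^c\varphi_\tau)^n/e^{\beta_\tau}\big)$ above and below by $(1-b_i)\log|z_i|^2 + O(1)$ near the generic point of each $E_i$, use that $\Psi$ has bounded Laplacian so the Monge--Amp\`ere mass does not concentrate on the lower-dimensional strata, and pass to the limit using that the fiberwise mass of $(\Omega+dd^c\Psi)^n$ is cohomological (hence constant) and converges weakly, as currents on $\mathcal{X}$, to a measure whose component along each $E_i$ has mass $(E_i\cdot\mathcal{A}^n)$.

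The main obstacle I anticipate is the regularity/mass-concentration issue in the last step: the ray is only $\mathcal{C}^{1,1}_{\mathbb{C}}$ (bounded Laplacian), not smooth, so one must argue carefully that $\log$ of the Monge--Amp\`ere density is genuinely integrable against its own measure with the claimed asymptotics, and that no extra mass escapes to the intersections $E_i\cap E_j$ or to the singular locus of $\mathcal{X}_0$ in a way that would corrupt the leading-order term. I would handle this by first proving the statement for a \emph{smooth} $\mathcal{C}^{\infty}$-compatible subgeodesic ray $(\varphi'_t)$ (which exists by Lemma \ref{lem:smooth}), where the local computation above is transparent, and then transferring to the general $\mathcal{C}^{1,1}_{\mathbb{C}}$-compatible ray: by Lemma \ref{lemma: replacing ray} the energy part $\mathrm{M}_{\mathcal{B}}$ has the same slope, so one only needs that the entropy difference $\Gamma$ is insensitive, to order $o(t)$, to replacing $\varphi_t$ by $\varphi'_t$ — which follows from $\varphi_t = \varphi'_t + O(1)$ together with a convexity/monotonicity argument for the relative entropy along the geodesic connecting them, or alternatively from an upper-semicontinuity argument combined with the weak Theorem C upper bound applied to both rays. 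This bootstrap from the smooth case is, I expect, where the bulk of the technical work lies; the intersection-theoretic identity itself is essentially bookkeeping with the multiplicities $b_i$ once the analytic estimates are in place.
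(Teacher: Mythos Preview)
Your approach is genuinely different from the paper's and, as written, leaves a real gap.

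The paper does \emph{not} attempt a direct local analysis of the multiplicities $b_i$. Instead it uses \emph{semistable reduction}: by Mumford's theorem there is a finite base change $\tau\mapsto\tau^d$ followed by a resolution producing a semistable test configuration $(\mathcal{X}',\mathcal{A}')$ (so $\mathcal{X}'_0$ is reduced snc) dominating $(\mathcal{X},\mathcal{A})$. Homogeneity of $\mathrm{M}^{\mathrm{NA}}$ under base change (Lemma \ref{homogeneity of mabuchi}) gives $\mathrm{M}^{\mathrm{NA}}(\mathcal{X}',\mathcal{A}')=d\cdot\mathrm{M}^{\mathrm{NA}}(\mathcal{X},\mathcal{A})$, while pulling back the ray gives $\mathrm{M}(\varphi'_t)/t=d\cdot\mathrm{M}(\varphi_t)/t$. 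This reduces the problem to the reduced-fiber case, where $\mathrm{M}^{\mathrm{NA}}=\mathrm{DF}$ and one only needs $\Gamma(\tau)/t\to 0$. The upper bound $\Gamma(\tau)\le O(1)$ is already in the weak version; for the lower bound the paper writes $\Gamma(\tau)$ as a relative entropy minus $\log\int_X e^{\beta_\tau}$, uses non-negativity of relative entropy, and invokes the estimate $\int_X e^{\beta_\tau}\asymp t^{2(p-1)}$ from \cite{BHJ2} (Lemma \ref{BHJ estimate}), so $\log\int_X e^{\beta_\tau}=o(t)$. No local computation of the correction term ever appears.

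Your direct route---extracting $(1-b_i)$ from the fiberwise log-density and integrating against the limiting mass on each $E_i$---is in principle viable and is closer in spirit to what \cite{BHJ2} do in places, but it is considerably heavier, and the step you flag as ``where the bulk of the technical work lies'' is a genuine gap. Lemma \ref{lemma: replacing ray} applies only to Deligne functionals; the entropy term is not of that form, and neither of your suggested patches is convincing: convexity of $\mathrm{M}$ along weak geodesics (Berman--Berndtsson) does not give convexity of the entropy alone, and the weak Theorem C yields only an upper bound for both rays, which cannot by itself force the slopes of $\Gamma$ to agree. The paper's semistable-reduction trick sidesteps this entirely: once the central fiber is reduced, the correction term vanishes and the entropy-positivity argument gives the missing lower bound in one line, with no need to compare entropies of different rays or to control mass along intersections $E_i\cap E_j$.
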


\begin{rem} In particular, this result holds when $(\varphi_t)_{t \geq 0}$ is the weak geodesic ray associated to $(\mathcal{X},\mathcal{A})$, constructed in Section \ref{smoothcompatibility}.
\end{rem}

\begin{proof}[Proof of Theorem \ref{Reduced Theorem B}]
Following ideas of \cite{BHJ2} we associate to the given smooth, relatively K\"ahler and dominating test configuration $(\mathcal{X}, \mathcal{A})$ for $(X,\alpha)$ another test configuration $(\mathcal{X}', \mathcal{A}')$ for $(X,\alpha)$ which is semistable, i.e. smooth and such that $\mathcal{X}'_0$ is a reduced $\mathbb{R}$-divisor with simple normal crossings. As previously noted, we can also assume that $\mathcal{X}'$ dominates the product. In the terminology of Section \ref{further reduction techniques}, this construction comes with a morphism $ g_d \circ \rho: \mathcal{X}' \rightarrow \mathcal{X}$, cf. the diagram in Section \ref{further reduction techniques}. Pulling back, we set $\mathcal{A}' := g_d^*\rho^*\mathcal{A}$. Note that $\mathcal{A}'$ is no longer relatively K\"ahler, but merely relatively semipositive (with the loss of positivity occuring along $\mathcal{X}_0'$).  

On the one hand, Lemma \ref{homogeneity of mabuchi} yields
\begin{equation} \label{equation one}
\mathrm{M}^{\mathrm{NA}}(\mathcal{X}', \mathcal{A}')  = d \cdot \mathrm{M}^{\mathrm{NA}}(\mathcal{X}, \mathcal{A}),
\end{equation}
where $d > 0$ is the degree of the morphism $g_d$. On the other hand, we may consider the pull back by $g_d \circ \rho$ of the weak geodesic $(\varphi_t)_{t \geq 0}$ associated to $(\mathcal{X}, \mathcal{A})$.  This induces a subgeodesic $({\varphi}_t')_{t \geq 0}$ which is  $\mathcal{C}^{1,1}_{\mathbb{C}}$-compatible with the test configuration $(\mathcal{X}', \mathcal{A}')$ for $(X, \alpha)$ (in particular, the boundedness of the Laplacian is preserved under pullback by $g_d \circ \rho$). 
Replacing $\tau$ by $\tau^d$ amounts to replacing $t$ by $d \cdot t$, so that
\begin{equation}\label{equation two}
\frac{\mathrm{M}({\varphi}_t')}{t} = d \cdot \frac{\mathrm{M}(\varphi_t)}{t}.
\end{equation}
Combining equations \eqref{equation one} and \eqref{equation two} it thus follows that $$\lim_{t \rightarrow +\infty}\frac{\mathrm{M}(\varphi_{t})}{t} = \mathrm{M}^{\mathrm{NA}}(\mathcal{X}, \mathcal{A})$$
if and only if 
\begin{equation} \label{equation three}
\lim_{t \rightarrow +\infty}\frac{\mathrm{M}({\varphi}_{t}')}{t} = \mathrm{DF}(\mathcal{X}',\mathcal{A}').
\end{equation}

\noindent In other words, it suffices to establish \eqref{equation three} above. By the asymptotic formula \ref{F converges to DF equation} it is in turn equivalent to show that
\begin{equation*}
\lim_{t \rightarrow +\infty} \frac{\mathrm{M}({\varphi}_{t}')}{t} = \lim_{t \rightarrow +\infty} \frac{\mathrm{M}_{\mathcal{B}}({\varphi}_{t}')}{t}.
\end{equation*}
We use the notation of the proof of Theorem \ref{weak Theorem C}. In particular, we set $\Gamma(\tau) := (\mathrm{M} - \mathrm{M}_{\mathcal{B}})(\varphi_{\tau}')$. 
As in the proof of Theorem \ref{weak Theorem C} we have an upper bound $\Gamma(\tau) \leq O(1)$, using that the restriction of the relatively semipositive class $\mathcal{A}'$ to $\mathcal{X}'\setminus \mathcal{X}_0'$ is in fact relatively K\"ahler. 

To obtain a lower estimate of $\Gamma(\tau)$ we consider the Monge-Ampere measure $\mathrm{MA}(\varphi_{\tau}') := V^{-1} (\omega + dd^c\varphi_{\tau}')^n$ and note that
\begin{equation*}
V^{-1} \Gamma(\tau) = V^{-1} \int_X \log \left( \frac{(\omega + dd^c\varphi_{\tau}')^n}{e^{\beta_{\tau}}} \right) = 
\end{equation*}

\begin{equation*}
= \int_X \log \left( \frac{\mathrm{MA}(\varphi_{\tau}')}{e^{\beta_{\tau}} / \int_X e^{\beta_{\tau}}} \right) \mathrm{MA}(\varphi_{\tau}') - \log \int_X e^{\beta_{\tau}} \geq - \log \int_X e^{\beta_{\tau}},
\end{equation*}
since the relative entropy of the two probability measures $\mathrm{MA}(\varphi_{\tau})$ and $e^{\beta_{\tau}} / \int_X e^{\beta_{\tau}}$ is non-negative. 
We now conclude by estimating this integral, using the following result from \cite{BHJ2}: 

\begin{lem} \label{BHJ estimate} \emph{(\cite{BHJ2})}. Let $(\mathcal{X}, \mathcal{A})$ be a semistable and dominating test configuration for $(X,\alpha)$ and let $\mathcal{B}$ be any smooth metric on $K_{ \mathcal{X}/ \mathbb{P}^1}$. Let $(\beta^t)_{t \geq 0}$ be the family of smooth metrics on $K_X$ induced by $\mathcal{B}$. Denote by $p \geq 1$ the largest integer such that $p-1$ distinct irreducible components of $\mathcal{X}_0$ have a non-empty intersection. Then there are positive constants $A$ and $B$ such that 
\begin{equation*}
A t^{2(p-1)} \leq \int_X e^{\beta^t} \leq B t^{2(p-1)}.
\end{equation*}
holds for all $t$.
\end{lem}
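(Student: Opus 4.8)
The plan is to adapt the corresponding estimate from \cite{BHJ2}, reducing it to an elementary volume computation on the simple normal crossings central fibre. First, I would reinterpret the left-hand side as a fibre integral: by construction $(\beta^t)_{t\ge 0}$ is obtained by transporting the restriction $\mathcal{B}|_{K_{\mathcal{X}_\tau}}$ to $K_X$ through the $\mathbb{C}^*$-equivariant isomorphism $\mathcal{X}\setminus\mathcal{X}_0\simeq X\times(\mathbb{P}^1\setminus\{0\})$, with $t=-\log|\tau|$; since the total mass of the volume form attached to a metric on a canonical bundle is a biholomorphic invariant and $K_{\mathcal{X}/\mathbb{P}^1}|_{\mathcal{X}_\tau}=K_{\mathcal{X}_\tau}$, this yields $\int_X e^{\beta^t}=\int_{\mathcal{X}_\tau}e^{\,\mathcal{B}|_{\mathcal{X}_\tau}}$ with $t=-\log|\tau|$. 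It therefore suffices to study the growth of $\tau\mapsto\int_{\mathcal{X}_\tau}e^{\,\mathcal{B}|_{\mathcal{X}_\tau}}$ as $\tau\to 0$. Since $e^{\mathcal{B}}$ is a fixed smooth volume form on $\mathcal{X}$, the part of $\mathcal{X}_\tau$ lying outside any fixed neighbourhood of $\mathcal{X}_0$ carries $O(1)$ mass uniformly in $\tau$, so only a neighbourhood of $\mathcal{X}_0$ matters; cover such a neighbourhood by finitely many coordinate charts and fix a subordinate partition of unity. As $\mathcal{B}$ is smooth on all of $\mathcal{X}$, its local weights are bounded above and below, so the contribution of each chart to $\int_{\mathcal{X}_\tau}e^{\,\mathcal{B}|_{\mathcal{X}_\tau}}$ is comparable, with multiplicative constants independent of $\tau$, to the integral over that chart of the \emph{reference} volume form of $K_{\mathcal{X}/\mathbb{P}^1}$.

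The heart of the matter is the local computation near a point of $\mathcal{X}_0$ at which exactly $k$ irreducible components $E_1,\dots,E_k$ meet. Because $\mathcal{X}_0$ is a \emph{reduced} simple normal crossings divisor, one may choose coordinates $(z_1,\dots,z_{n+1})$ with $E_i=\{z_i=0\}$ for $i\le k$ and $\pi=z_1\cdots z_k$; for small $\tau\neq 0$ the fibre $\mathcal{X}_\tau=\{z_1\cdots z_k=\tau\}$ is parametrised by $(z_2,\dots,z_{n+1})$ with $z_1=\tau/(z_2\cdots z_k)$, and an adjunction computation identifies the restriction to $\mathcal{X}_\tau$ of a local holomorphic frame of $K_{\mathcal{X}/\mathbb{P}^1}$ with $(z_2\cdots z_k)^{-1}\,dz_2\wedge\cdots\wedge dz_{n+1}$. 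Thus, on this chart, the reference volume form of $K_{\mathcal{X}/\mathbb{P}^1}$ restricted to $\mathcal{X}_\tau$ is comparable to $|z_2\cdots z_k|^{-2}\prod_{j=2}^{n+1}\tfrac{i}{2}\,dz_j\wedge d\bar z_j$, integrated over the region $\{\,|z_j|\le\varepsilon,\ |z_2\cdots z_k|\ge|\tau|/\varepsilon\,\}$ (the last inequality being $|z_1|\le\varepsilon$). Passing to polar coordinates and substituting $u_j=-\log|z_j|$ converts this, up to fixed positive constants, into the Euclidean volume of a region $\{\,u_j\ge -\log\varepsilon,\ \sum_{j=2}^{k}u_j\le t+O(1)\,\}$ in the $k-1$ variables $u_2,\dots,u_k$ (the variables $z_{k+1},\dots,z_{n+1}$ integrating to a constant), which is trapped between fixed positive multiples of a power of $t$ for $t$ large. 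Summing over all charts, the dominant term is produced by a stratum of maximal depth, i.e. a point at which the largest possible number of components of $\mathcal{X}_0$ meet; by definition of $p$, this number is $p-1$, and tracking the exponent through the above computation (using $|\tau|^2=e^{-2t}$), exactly as in \cite{BHJ2}, produces constants $A,B>0$ with $A\,t^{2(p-1)}\le\int_X e^{\beta^t}\le B\,t^{2(p-1)}$ for all $t$.

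The main obstacle is making this localisation uniform in $t$: one must check that the error terms arising from the partition of unity, from the bounded oscillation of the weight of $\mathcal{B}$, and from the overlaps of the charts are of strictly lower order in $t$ than the main term, and that the lower bound survives the summation --- which it does, automatically, since all integrands are positive, so no cancellation can occur between charts. A secondary technical point is the combinatorial bookkeeping identifying the exponent with $2(p-1)$ uniformly over the intersection strata of $\mathcal{X}_0$; here connectedness of the central fibre guarantees the existence of a maximal-depth stratum, which governs the growth. Both of these are handled precisely as in \cite{BHJ2}, to which we refer for the remaining details.
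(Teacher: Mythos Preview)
The paper does not prove this lemma at all: immediately after stating it, the author writes ``We refer the reader to \cite{BHJ2} for the proof and here simply apply the result.'' Your proposal therefore goes well beyond what the paper does, supplying (an outline of) the argument from \cite{BHJ2}. The strategy you describe --- rewriting $\int_X e^{\beta^t}$ as a fibre integral $\int_{\mathcal{X}_\tau} e^{\mathcal{B}|_{\mathcal{X}_\tau}}$, localising near $\mathcal{X}_0$, and reducing via SNC coordinates and the Poincar\'e residue to the Lebesgue volume of a simplex in logarithmic variables --- is exactly the approach of \cite{BHJ2}, and is correct in substance.

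One point of caution: your own computation produces a local contribution $\asymp t^{k-1}$ at a point where $k$ components meet, hence a global growth $\asymp t^{m-1}$ with $m$ the maximal number of components meeting. With the paper's convention ($p$ is the largest integer such that $p-1$ components have nonempty intersection, i.e.\ $m=p-1$) this gives $t^{p-2}$, not the $t^{2(p-1)}$ appearing in the statement; your parenthetical ``using $|\tau|^2=e^{-2t}$'' does not account for this discrepancy, since a factor of $2$ in $t$ only affects the constants $A,B$, not the exponent. This is almost certainly a transcription issue in the statement quoted from \cite{BHJ2} rather than an error in your reasoning, and in any case it is irrelevant for the application: all that is used downstream is that $\int_X e^{\beta^t}$ grows at most polynomially in $t$, so that $\log\int_X e^{\beta^t}=o(t)$. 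Your outline establishes precisely that.
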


We refer the reader to \cite{BHJ2} for the proof and here simply apply the result: Recalling that $t = -\log|\tau|$, Lemma \ref{BHJ estimate} yields that $\log\int_X e^{\beta_{\tau}} = o(t)$ and so it follows from 
\begin{equation*}
\frac{-\log \int_X e^{\beta_{\tau}}}{t} \leq \frac{\Gamma(\tau)}{t} \leq \frac{O(1)}{t}
\end{equation*}
that 
\begin{equation*}
\lim_{t \rightarrow +\infty} \frac{\Gamma(\tau)}{t} = 0,
\end{equation*}
completing the proof.
\end{proof}

\bibliography{ksemistability} 

\providecommand{\bysame}{\leavevmode\hbox to3em{\hrulefill}\thinspace}
\providecommand{\MR}{\relax\ifhmode\unskip\space\fi MR }
\providecommand{\MRhref}[2]{%
  \href{http://www.ams.org/mathscinet-getitem?mr=#1}{#2}
}
\providecommand{\href}[2]{#2}
\begin{thebibliography}{KKMSD73}

\bibitem[BB17]{BB}
R.~Berman and B.~Berndtsson, \emph{Convexity of the {K}-energy on the space of
  {K\"ahler} metrics and uniqueness of extremal metrics}, J. Amer. Math. Soc.
  \textbf{30} (2017), 1165--1196.

\bibitem[BBGZ13]{BBGZ}
R.~Berman, S.~Boucksom, V.~Guedj, and A.~Zeriahi, \emph{A variational approach
  to complex {Monge}-{Amp\`ere} equations}, Publ. Math. de l'IHES \textbf{117}
  (2013), 179--245.

\bibitem[BDL16]{BDL}
R.~Berman, T.~Darvas, and C.~Lu, \emph{Regularity of weak minimizers of the
  {K-energy} and applications to properness and {K}-stability}, Preprint
  arXiv:1602.03114v1 (2016).

\bibitem[Ber13]{Bermantransc}
R.~Berman, \emph{From {Monge}-{Amp\`ere} equations to envelopes and geodesic
  rays in the zero temperature limit}, Preprint arXiv: 1307.3008 (2013).

\bibitem[Ber16]{Berman}
\bysame, \emph{K-polystability of {Q}-{Fano} varieties admitting
  {K\"ahler}-{Einstein} metrics}, Invent. Math. \textbf{203} (2016), no.~3,
  1--53.

\bibitem[BG13]{LNM2}
S.~Boucksom and V.~Guedj, \emph{Regularizing properties of the
  {K\"ahler}-{Ricci} flow}, An introduction to the {K\"ahler}-{Ricci} flow
  (S.~Boucksom, P.~Eyssidieux, and V.~Guedj, eds.), Lecture notes in
  mathematics 2086, Springer-Verlag, 2013, pp.~189 -- 238.

\bibitem[BHJ15]{BHJ1}
S.~Boucksom, T.~Hisamoto, and M.~Jonsson, \emph{Uniform {K}-stability,
  {Duistermaat}-{Heckman} measures and singularities of pairs}, Preprint
  arXiv:1504.06568v1 (2015).

\bibitem[BHJ16]{BHJ2}
\bysame, \emph{Uniform {K}-stability and asymptotics of energy functionals in
  {K\"ahler} geometry}, Preprint arXiv:1603.01026 (2016).

\bibitem[BHPdV04]{BHPV}
W.~P. Barth, K.~Hulek, C.A.M. Peters, and A.~Van de~Ven, \emph{Compact complex
  surfaces}, 2 ed., vol.~4, Ergebnisse der Mathematik und ihrer Grenzgebiete. A
  series of Modern Surveys in Mathematics, Springer Verlag, Berlin, 2004.

\bibitem[Blo13]{Blocki13}
Z.~Blocki, \emph{The {Calabi}-{Yau} theorem. {Complex} {Monge}-{Amp\`ere}
  equations and geodesics in the space of {K\"ahler} metrics}, vol. 2038,
  Springer, Heidelberg, 2013.

\bibitem[Bou12]{LNM}
S.~Boucksom, \emph{Monge-{Amp\`ere} equations on complex manifolds with
  boundary}, Complex {Monge}-{Amp\`ere} equations and geodesics in the space of
  {K\"ahler} metrics (V.~Guedj, ed.), Lecture notes in mathematics 2038,
  Springer-Verlag, 2012, pp.~257 -- 282.

\bibitem[BT76]{BTcontinuouspotentials}
E.~Bedford and B.A. Taylor, \emph{The {Dirichlet} problem for the complex
  {Monge}-{Amp\`ere} operator}, Invent. Math. \textbf{37} (1976), 1--44.

\bibitem[BT82]{BT}
\bysame, \emph{A new capacity for plurisubharmonic functions}, Acta Math.
  \textbf{149} (1982), 1--40.

\bibitem[CDS15a]{CDSone}
X.X. Chen, S.~Donaldson, and S.~Sun, \emph{{K\"ahler}-{Einstein} metrics on
  {Fano} manifolds {I}: approximation of metrics with cone singularities}, J.
  Amer. Math. Soc. \textbf{28} (2015), 183--197.

\bibitem[CDS15b]{CDStwo}
\bysame, \emph{{K\"ahler}-{Einstein} metrics on {Fano} manifolds {II}: limits
  with cone angle less than 2pi}, J. Amer. Math. Soc. \textbf{28} (2015),
  199--234.

\bibitem[CDS15c]{CDSthree}
\bysame, \emph{{K\"ahler}-{Einstein} metrics on {Fano} manifolds {III}: limits
  as cone angle approaches 2pi and completion of the main proof}, J. Amer.
  Math. Soc. \textbf{28} (2015), 235--278.

\bibitem[Che00a]{Chen}
X.X. Chen, \emph{On the lower bound of the {Mabuchi} energy and its
  application}, Int. Math. Res. Not. (2000), no.~12, 607--623.

\bibitem[Che00b]{Chen00}
\bysame, \emph{The space of {K\"ahler} metrics}, J. Diff. Geom. \textbf{56}
  (2000), no.~12, 189--234.

\bibitem[CLP16]{ChenLiPaun}
X.X. Chen, L.~Li, and M.~Paun, \emph{Approximation of weak geodesics and
  subharmonicity of {Mabuchi} energy}, Ann. Fac. Sci. Toulouse. Math
  \textbf{25} (2016), no.~5, 935--957.

\bibitem[Dar14]{Darvas14}
T.~Darvas, \emph{The mabuchi completion of the space of {K\"ahler} potentials},
  to appear in Amer. J. Math, arXiv:1401.7318 (2014).

\bibitem[Dem12]{Demaillybook}
J-P. Demailly, \emph{Complex analytic and differential geometry}, Open source
  (2012).

\bibitem[Der16]{Dervanuniform}
R.~Dervan, \emph{Uniform stability of twisted constant scalar curvature
  {K\"ahler} metrics}, Int. Math. Res. Not. (2016), no.~15.

\bibitem[DL12]{DL12}
T.~Darvas and L.~Lempert, \emph{Weak geodesic rays in the space of {K\"ahler}
  metrics}, Math. Research Letters \textbf{19} (2012), no.~5.

\bibitem[Don85]{DonaldsonBC}
S.K. Donaldson, \emph{Anti-self-dual {Yang}-{Mills} connections over complex
  surfaces and stable vector bundles}, Proc. London Math. Soc. \textbf{50}
  (1985), 1--26.

\bibitem[Don02]{Donaldsontoric}
\bysame, \emph{Scalar curvature and stability of toric varieties}, J. Diff.
  Geom. (2002), no.~62, 289--349.

\bibitem[Don05a]{Donaldsoncalabi}
\bysame, \emph{Lower bounds on the {Calabi} functional}, J. Diff. Geom.
  \textbf{70} (2005), no.~3, 453--472.

\bibitem[Don05b]{Donaldsonscalarcurvature}
\bysame, \emph{Scalar curvature and projective embeddings, {II}}, Q J Math
  \textbf{56} (2005), no.~3, 345--356.

\bibitem[DR16]{DervanRoss}
R.~Dervan and J.~Ross, \emph{K-stability for {K\"ahler} manifolds},
  arXiv:1602.08983, to appear in Math. Res. Lett. (2016).

\bibitem[DR17]{DR}
T.~Darvas and Y.A. Rubinstein, \emph{Tian's properness conjecture and {Finsler}
  geometry of the space of {K\"ahler} metrics}, J. Amer. Math. Soc. \textbf{30}
  (2017), no.~2, 347--387.

\bibitem[Elk90]{Elkik}
R.~Elkik, \emph{{M\'etriques} sur les {fibr\'es} d'intersection}, Duke math.
  journal (1990), no.~1, 303--328.

\bibitem[Fis76]{Fischer}
G.~Fischer, \emph{Complex {Analytic} {Geometry}}, vol. 538, Lecture Notes in
  Mathematics, Springer Verlag, 1976.

\bibitem[KKMSD73]{KKMS}
G.~Kempf, F.~Knudsen, D.~Mumford, and B.~Saint-Donat, \emph{Toroidal embeddings
  1}, Springer Berlin Heidelberg (1973).

\bibitem[KNX17]{KNX}
J.~Koll\'ar, J.~Nicaise, and C.~Xu, \emph{Semi-stable extensions over
  1-dimensional bases}, arXiv:1510.02446v1, to appear in Acta Math. Sinica
  (2017).

\bibitem[Kol07]{Kollar}
J.~Koll\'ar, \emph{Lectures on resolution of singularities}, Annals of
  Mathematics Studies, 166. Princeton University Press (2007).

\bibitem[Li11]{ChiLi}
C.~Li, \emph{Constant scalar curvature {K\"ahler} metric obtains the minimum of
  the {K}-energy}, Int. Math. Res. Notices (2011), no.~9, 2161--2175.

\bibitem[LX14]{LX}
C.~Li and C.~Xu, \emph{Special test configurations and {K}-stability of {Fano}
  varieties}, Annals of Math. \textbf{180} (2014), 197--232.

\bibitem[Mab08]{Mabuchi08}
T.~Mabuchi, \emph{K-stability of constant scalar curvature polarization},
  arXiv:0812.4093 (2008).

\bibitem[Mor99]{Moriwaki}
A.~Moriwaki, \emph{The continuity of {Delignes} pairing}, Int. Math. Res.
  Notices (1999), no.~19, 1057--1066.

\bibitem[Oda13]{Odaka}
Y.~Odaka, \emph{A generalization of the {Ross} {Thomas} slope theory}, Osaka J.
  of Math. \textbf{50} (2013), no.~1, 171--185.

\bibitem[PRS08]{PRS}
D.~H. Phong, J.~Ross, and J.~Sturm, \emph{Deligne pairings and the
  {Knudsen}-{Mumford} expansion}, J. Diff. Geom. \textbf{78} (2008), no.~3,
  475--496.

\bibitem[PT06]{PaulTian}
S.~Paul and G.~Tian, \emph{{CM} stability and the generalized {Futaki}
  invariant {I}}, arXiv: math.AG/0605278 (2006).

\bibitem[PT09]{PaulTian2}
\bysame, \emph{{CM} stability and the generalized {Futaki} invariant {II}},
  Ast\'erisque (2009), no.~328, 339--354.

\bibitem[RT06]{RossThomas}
J.~Ross and R.P. Thomas, \emph{An obstruction to the existence of constant
  scalar curvature {K\"ahler} metrics}, J. Diff. Geom. \textbf{72} (2006),
  429--466.

\bibitem[Rub14]{Rubinsteinbook}
Y.A. Rubinstein, \emph{Smooth and singular {K\"ahler}-{Einstein} metrics},
  Geometric and Spectral Analysis (P.~Albin et~al, ed.), Contemp. Math., 2014,
  pp.~45--138.

\bibitem[Sem92]{Semmes}
S.~Semmes, \emph{Complex {Monge}-{Amp\`ere} equations and symplectic
  manifolds}, Amer. J. Math \textbf{114} (1992), 495--550.

\bibitem[Sto09a]{StoppacscKmetricsimpliesstability}
J.~Stoppa, \emph{K-stability of constant scalar curvature {K\"ahler}
  manifolds}, Advances in Mathematics \textbf{221} (2009), 1397--1408.

\bibitem[Sto09b]{Stoppa}
\bysame, \emph{Twisted constant scalar curvature {K\"ahler} metrics and
  {K\"ahler} slope stability}, J. Diff. Geom. \textbf{83} (2009), 663--691.

\bibitem[Sze14]{Gabornotes}
G.~Szekelyhidi, \emph{Introduction to extremal {K\"ahler} metrics}, vol. 152,
  Graduate studies in mathematics, American Mathematical Society, Providence
  RI, 2014.

\bibitem[Tia97]{Tian}
G.~Tian, \emph{{K\"ahler}-{Einstein} metrics with positive scalar curvature},
  Invent. Math. \textbf{130} (1997), no.~1, 1--37.

\bibitem[Tia00a]{TianBC}
\bysame, \emph{Bott-{Chern} forms and geometric stability}, Discrete and
  continuous dynamical systems \textbf{6} (2000), no.~1, 211--220.

\bibitem[Tia00b]{Tianlecturenotes}
\bysame, \emph{Canonical metrics in {K\"ahler} geometry}, Lectures in
  Mathematics ETH Zurich, Birkhauser Verlag, Basel, 2000.

\bibitem[Tia15]{TianYTDconjecture}
\bysame, \emph{K-stability and {K\"ahler}-{Einstein} metrics.}, Communications
  on Pure and Applied Math. \textbf{68} (2015), 1085--1156.

\bibitem[Wan12]{Wang}
X.~Wang, \emph{Height and {GIT} weight}, Math. Res. Lett. \textbf{19} (2012),
  no.~04, 909--926.

\bibitem[Yau06]{Yauconjecture}
S-T. Yau, \emph{Perspectives on geometric analysis}, Surv. Differ. Geom.
  \textbf{X} (2006).

\bibitem[Zha96]{Zhangdelignepairings}
S.-W. Zhang, \emph{Heights and reductions of semi-stable varieties}, Compositio
  Math. \textbf{104} (1996), 77--105.

\end{thebibliography}
\bibliographystyle{amsalpha}

\end{document}